\theoremstyle{plain}
\newtheorem{thm}{Theorem}
\newtheorem{lem}[thm]{Lemma}
\newtheorem{prop}[thm]{Proposition}
\newtheorem{cor}[thm]{Corollary}
\theoremstyle{remark}
\newtheorem{rem}[thm]{Remark}
\providecommand{\sm}{\setminus}
\providecommand{\N}{\mathbb{N}}
\providecommand{\R}{\mathbb{R}} 
\providecommand{\C}{\mathbb{C}}
\providecommand{\cF}{\mathcal{F}}
\providecommand{\cL}{\mathcal{L}}
\providecommand{\cV}{\mathcal{V}}
\providecommand{\eps}{\varepsilon}
\providecommand{\ov}{\overline}
\providecommand{\dx}{\,\mathrm{d}x}
\providecommand{\wto}{\rightharpoonup}
\providecommand{\les}{\lesssim}
\DeclareMathOperator{\supp}{supp}
\DeclareMathOperator{\sign}{sign}
\DeclareMathOperator{\loc}{loc}
\DeclareMathOperator{\curl}{curl}
\DeclareMathOperator{\Real}{Re}
\DeclareMathOperator{\Imag}{Im}
\DeclareMathOperator{\dist}{dist}
\renewcommand{\qed}{\hfill $\Box$}
\newcommand{\vecII}[2]{
\ensuremath{
\begin{pmatrix}
#1 \\ #2 \\
\end{pmatrix}}}
\newcommand{\matII}[4]{
\ensuremath{
\begin{pmatrix}
#1 & #2 \\
#3 & #4 \\
\end{pmatrix}}}
\definecolor{Darkgblue}{rgb}{0.3,0.3,0.5}
\begin{document}

\allowdisplaybreaks

\title[A Limiting Absorption Principle for Helmholtz systems and Maxwell's equations]{A Limiting Absorption
Principle for Helmholtz systems and time-harmonic isotropic Maxwell's equations}

\author{Lucrezia Cossetti, Rainer Mandel\textsuperscript{1}}
\thanks{Funded by the Deutsche Forschungsgemeinschaft (DFG, German Research Foundation)
- Project-ID 258734477 - SFB 1173.}
\address{\textsuperscript{1}Karlsruhe
Institute of Technology, Institute for Analysis, Englerstra{\ss}e 2, 76131 Karlsruhe, Germany}
\email{lucrezia.cossetti@kit.edu; rainer.mandel@kit.edu}
  
\subjclass[2000]{35Q61, 35B45, 35J05}

\keywords{}
\date{\today}

\newpage

\begin{abstract}
  In this work we investigate the $L^p-L^q$-mapping properties of the resolvent associated with
  the time-harmonic isotropic Maxwell operator. As spectral parameters close to the spectrum are also covered
  by our analysis, we obtain an $L^p-L^q$-type Limiting Absorption Principle for this operator. Our
  analysis relies on new results for Helmholtz systems with zero order non-Hermitian perturbations. 
  Moreover, we provide an improved version of the Limiting Absorption Principle for Hermitian (self-adjoint)
  Helmholtz systems.
\end{abstract}

\maketitle
\allowdisplaybreaks

 \section{Introduction}

 The propagation of electromagnetic waves in continuous three-dimensional media is governed by the
 \textit{Maxwell's equations}. They consist of four equations, two vectorial and two scalar ones in the
 unknowns $\mathcal D$ and $\mathcal E$ (the electric fields) and $\mathcal B$ and $\mathcal H$ (the magnetic
 fields) for a given current density $\mathcal J$. Assuming the absence of electric charges their macroscopic
 formulation reads as follows
\begin{equation}\label{eq:Maxwell-time}
	\partial_t \mathcal D-\nabla \times \mathcal H=-\mathcal J, \qquad
	\partial_t \mathcal B+\nabla \times 	\mathcal E=0, \qquad
	\nabla \cdot \mathcal D=\nabla \cdot \mathcal B=0,
\end{equation} 
with $\mathcal D,\mathcal H, \mathcal B,\mathcal E,\mathcal J\colon \R \times \R^3 \to \C^3.$ 
Notice that the restriction to the case of no electric charges ($\rho=0$) influencing the propagation of the electromagnetic waves or, which is the same, the divergence-freeness of $\mathcal D$ and $\mathcal B,$ 
implies $\nabla \cdot \mathcal J=0.$ 
Constitutive relations that specify  the connections between the electric displacement $\mathcal D$ and the
electric field $\mathcal E$ and between the magnetic flux density $\mathcal B$ and the magnetic field
$\mathcal H$ are necessary for meaningful applications of this model. In general,  
these relations need not be simple, but in the physically realistic scenario where ferro-electric
and ferro-magnetic materials are discarded and where the fields are weak enough, the material laws may  be
assumed to obey the following \textit{linear} relations:
\begin{equation}\label{eq:lin-constitutive-rel}
\mathcal D=\varepsilon(x) \mathcal E,\qquad
\mathcal B=\mu(x) \mathcal H.
\end{equation}  
Here $\varepsilon$ and $\mu$ embody  the \textit{permittivity} respectively the \textit{permeability} of the
medium. In general anisotropic materials, where the interaction of fields and matter
not only depends on the position in the material but also on the direction of the fields, these
quantities are mathematically represented as tensors.  In this paper we will be exclusively concerned  with
the case of \textit{isotropic} (\textit{i.e.} direction-independent) media where $\varepsilon$ and $\mu$
are \textit{scalar}-valued functions on $\R^3.$ For a more detailed description of   Maxwell's equations we
refer the reader to~\cite{Jackson,Kuchment}.

\medskip

We will focus on \textit{monochromatic} waves only, \textit{i.e.}, electromagnetic fields
$\mathcal E, \mathcal D, \mathcal B, \mathcal H, \mathcal J$ that are periodic functions of time with the same frequency $\omega\in
\R\sm\{0\},$ more specifically $\mathcal E(x,t):=e^{i\omega t}E(x)$, $\mathcal D(x,t):=e^{i\omega
t}D(x)$, $\mathcal B(x,t):=e^{i\omega t}B(x)$, $\mathcal H(x,t):=e^{i\omega t}H(x)$, $\mathcal
J(x,t):=e^{i\omega t}J(x)$ for vector fields $E,D, B,H, J\colon \R^3 \to \C^3.$ This gives rise to
the following time-harmonic analogue of Maxwell's equations~\eqref{eq:Maxwell-time} once the linear
constitutive relations from~\eqref{eq:lin-constitutive-rel} are imposed:
\begin{equation}\label{eq:Maxwell-harmonic-simplified}
i\omega \eps E - \nabla \times H=-J,\qquad
i\omega \mu H + \nabla \times E=0.
\end{equation}
In this paper we are interested in the following slightly more general model 
\begin{equation}\label{eq:Maxwell-harmonic}
	i\zeta \eps E - \nabla \times H=-J_\textup{e}, \qquad
	i\zeta \mu H + \nabla \times E=J_\textup{m},
\end{equation}
where $\zeta\in \C$ and where both electric and magnetic current densities $J_\textup{e}$ and $J_\textup{m}$
are included. Allowing for spectral parameters $\zeta\in\C\sm\R$ reflects the so-called Ohm's law for
conducting media, which asserts that the current $J$ induced by the electric field $E$ can
be described (in linear approximation) by $J = \sigma E + J_\textup{e}$,  
where $\sigma\colon \R^3\to \R$ represents the \textit{conductivity} and $J_\textup{e}$ is  the external current
density. Thus, plugging in Ohm's law into~\eqref{eq:Maxwell-harmonic-simplified} one gets that the first
equation in~\eqref{eq:Maxwell-harmonic-simplified} can be rewritten as
\begin{equation*}
	i(\omega \eps -i\sigma)E- \nabla \times H=-J_\textup{e},
\end{equation*}
which motivates the interest in the model~\eqref{eq:Maxwell-harmonic}.

\medskip

The main purpose of this paper is to prove an $L^p$-type Limiting Absorption Principle for the
time-harmonic Maxwell's equations~\eqref{eq:Maxwell-harmonic}. Roughly speaking, proving a Limiting Absorption
Principle means proving existence and continuity of the resolvent operator up to the essential spectrum. In the context of
the Maxwell system~\eqref{eq:Maxwell-harmonic} this translates into studying the boundedness of
solutions $(E_\zeta, H_\zeta)$ of~\eqref{eq:Maxwell-harmonic} with $\Imag(\zeta)\neq 0$ and characterizing
their limits as $\Imag(\zeta) \to 0^\pm.$ In this paper we shall prove the following result.

\begin{thm}\label{thm:main}
	Let $\omega\in\R\sm\{0\}$ and assume that $1\leq p,\tilde p, q\leq \infty$ satisfy
\begin{equation} \label{eq:LAP_Maxwell_conditions} 
  \frac{2}{3}<\frac{1}{p}<1,\qquad
  \frac{1}{6}<\frac{1}{q}<\frac{1}{3},\qquad
  \frac{1}{2}\leq \frac{1}{p}-\frac{1}{q}\leq \frac{2}{3},\qquad
  0\leq \frac{1}{\tilde p}-\frac{1}{q}\leq \frac{1}{3}.   
\end{equation}
 Moreover assume  that there are $\eps_\infty,\mu_\infty>0$ such that
\begin{enumerate}
	\item[(A1)] $\eps, \mu \in W^{1, \infty}(\R^3)$  are uniformly positive, 
	\item[(A2)]  $|\nabla (\eps \mu)| + |\eps\mu -\eps_\infty \mu_\infty| + 
  	|\nabla \eps|^2 + |\nabla \mu|^2+
  	|D^2 \eps| + |D^2 \mu|\in L^{\frac{3}{2}}(\R^3) + L^2(\R^3)$, 
    \item[(A3)]  $|\nabla(\eps\mu)|+|\eps\mu-\eps_\infty\mu_\infty|\in
    L^\frac{q_1}{q_1-2}(\R^3)+L^{\frac{q_2}{q_2-2}}(\R^3)$ where $q\in [q_1,q_2]$ and 
    $(p,\tilde p,q_1),(p,\tilde p,q_2)$ satisfy~\eqref{eq:LAP_Maxwell_conditions}.
\end{enumerate}
   Then for all divergence-free vector fields $J_\textup{e}, J_\textup{m}\in L^p(\R^3;
  \C^3)\cap L^{\tilde p}(\R^3;\C^3)$, there are weak solutions
  $(E_\omega^\pm,H_\omega^\pm)\in L^q(\R^3;\C^6)\cap H^1_{\loc}(\R^3;\C^6)$ of the time-harmonic Maxwell
  system~\eqref{eq:Maxwell-harmonic} with $\zeta=\omega$ satisfying 
  \begin{equation}\label{eq:resolvent-estimate}
	  \|(E_\omega^\pm, H_\omega^\pm)\|_q\leq C(\omega) \big( \|(J_\textup{e}, J_\textup{m})\|_p +
	  \|(J_\textup{e}, J_\textup{m})\|_{\tilde p}\big) 
	\end{equation}
  where $\omega\mapsto C(\omega)$ is continuous on $\R\sm\{0\}$. Moreover the following holds:
	\begin{itemize}
	  \item[(i)] We have $(E_\zeta,H_\zeta)\to (E_\omega^\pm,H_\omega^\pm)$ in $L^q(\R^3;\C^6)\cap
	  H^1_{\loc}(\R^3;\C^6)$ as $\zeta\to \omega\pm i0$ where $(E_\zeta,H_\zeta)\in H^1(\R^3;\C^6)$ is the
	  unique weak solution solution of~\eqref{eq:Maxwell-harmonic} with divergence-free vector fields
	  $J_\textup{e}^\zeta,J_\textup{m}^\zeta\in L^p(\R^3;\C^3)\cap L^{\tilde p}(\R^3;\C^3)\cap L^2(\R^3;\C^6)$
	  converging to $J_\textup{e},J_\textup{m}$ in $L^p(\R^3;\C^3)\cap L^{\tilde p}(\R^3;\C^3)$, respectively. 
	  \item[(ii)] The function $u_\omega^\pm := (\eps^\frac{1}{2} E_\omega^\pm,\mu^\frac{1}{2}H_\omega^\pm)\in
	  L^q(\R^3;\C^6)$ solves the Helmholtz system
	  $$
	    (\Delta+ \omega^2\eps_\infty\mu_\infty) u_\omega^\pm  + \cV(\omega)u_\omega^\pm 
	    = \cL_1(\omega)\tilde J+\cL_2\tilde J
	  $$
	  where $\cV(\omega),\cL_1(\omega),\cL_2,\tilde J$ are defined at the beginning of
	  Section~\ref{section:absenceEV}. More precisely, $u_\omega^\pm$ satisfies the  
	  integral equation~\eqref{eq:EquationForSolution}.
	  \item[(iii)] If additionally $J_\textup{e},J_\textup{m}\in L^q(\R^3;\C^3)$, then
	  $(E_\omega^\pm,H_\omega^\pm)\in W^{1,q}(\R^3;\C^6)$.
	\end{itemize}
\end{thm}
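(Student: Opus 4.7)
The plan is to reduce the Maxwell system for $(E_\zeta,H_\zeta)$ to a vector Helmholtz system for the symmetrized unknown $u_\zeta:=(\eps^\frac{1}{2}E_\zeta,\mu^\frac{1}{2}H_\zeta)$ and then invoke the $L^p$--$L^q$ Limiting Absorption Principle for Helmholtz systems with non-Hermitian zero order perturbations that is established earlier in the paper. First I would fix $\zeta\in\C\sm\R$; for such spectral parameters the Maxwell operator is accretive in the weighted $L^2$ inner product determined by $\eps,\mu$, so that for any divergence-free sources $J_\textup{e}^\zeta,J_\textup{m}^\zeta\in L^p(\R^3;\C^3)\cap L^{\tilde p}(\R^3;\C^3)\cap L^2(\R^3;\C^3)$ one obtains a unique weak solution $(E_\zeta,H_\zeta)\in H^1(\R^3;\C^6)$. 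I would approximate the given sources $J_\textup{e},J_\textup{m}\in L^p\cap L^{\tilde p}$ by such fields, converging in $L^p\cap L^{\tilde p}$ as $\Imag(\zeta)\to 0$.

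The heart of the argument is the identification of $u_\zeta$ as a Helmholtz solution. Eliminating $H_\zeta$ by taking the curl of the first equation in~\eqref{eq:Maxwell-harmonic}, using $\nabla\times\nabla\times=-\Delta+\nabla\nabla\cdot$ together with $\nabla\cdot(\eps E_\zeta)=\nabla\cdot(\mu H_\zeta)=0$ to rewrite $\nabla\cdot E_\zeta=-(\nabla\log\eps)\cdot E_\zeta$ and the analogous identity for $H_\zeta$, and finally conjugating by $\eps^{1/2}$ and $\mu^{1/2}$, one arrives after a direct (if lengthy) computation at a system of the form
$$
(\Delta+\zeta^2\eps_\infty\mu_\infty)u_\zeta+\cV(\zeta)u_\zeta=\cL_1(\zeta)\tilde J^\zeta+\cL_2\tilde J^\zeta,
$$
where $\cV,\cL_1,\cL_2,\tilde J$ are the operators introduced at the beginning of Section~\ref{section:absenceEV}. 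Assumptions (A1)--(A3) are precisely tailored so that the coefficients of $\cV(\zeta)$, which are built from $\nabla\eps,\nabla\mu,D^2\eps,D^2\mu$ and $\eps\mu-\eps_\infty\mu_\infty$, lie in the Lebesgue sum-spaces required by the Helmholtz LAP, while (A3) provides the extra integrability needed to control the source as $\zeta\to\omega$.

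Applying the Helmholtz LAP to the displayed equation then yields a uniform bound of the form $\|u_\zeta\|_q \lesssim \|(J_\textup{e}^\zeta,J_\textup{m}^\zeta)\|_p+\|(J_\textup{e}^\zeta,J_\textup{m}^\zeta)\|_{\tilde p}$ with an implied constant depending continuously on $\zeta$ in a neighbourhood of $\omega$, and moreover gives convergence $u_\zeta\to u_\omega^\pm$ in $L^q\cap H^1_{\loc}$ together with the integral equation~\eqref{eq:EquationForSolution}. Undoing the symmetrization produces $(E_\omega^\pm,H_\omega^\pm)=(\eps^{-1/2}(u_\omega^\pm)_1,\mu^{-1/2}(u_\omega^\pm)_2)\in L^q(\R^3;\C^6)$ satisfying~\eqref{eq:resolvent-estimate}, and passing to the limit in the weak formulation of~\eqref{eq:Maxwell-harmonic} shows that $(E_\omega^\pm,H_\omega^\pm)$ is a weak solution with $\zeta=\omega$, establishing (i) and (ii); the continuity of $\omega\mapsto C(\omega)$ is inherited from the continuity built into the Helmholtz resolvent bounds.

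For (iii), the additional hypothesis $J_\textup{e},J_\textup{m}\in L^q$ combined with~\eqref{eq:Maxwell-harmonic}, the constraints $\nabla\cdot(\eps E_\omega^\pm)=\nabla\cdot(\mu H_\omega^\pm)=0$, and the $W^{1,\infty}$-regularity of $\eps,\mu$ provides $L^q$-control of $\nabla\times E_\omega^\pm,\nabla\times H_\omega^\pm$ and of $\nabla\cdot E_\omega^\pm,\nabla\cdot H_\omega^\pm$, and a standard $L^q$ div-curl estimate then upgrades $(E_\omega^\pm,H_\omega^\pm)$ to $W^{1,q}(\R^3;\C^6)$. The main technical obstacle of the scheme lies in the reduction step of the second paragraph: the commutators between $\nabla\times$, multiplication by $\eps^{1/2}$ and $\mu^{1/2}$, and the divergence substitutions must produce a perturbation $\cV(\zeta)$ whose structure matches the hypotheses of the Helmholtz LAP precisely, and this is exactly what forces the coupled conditions (A2)--(A3) and the specific range of exponents in~\eqref{eq:LAP_Maxwell_conditions}.
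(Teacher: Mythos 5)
Your reduction of the Maxwell system to the Helmholtz system for $u_\zeta=(\eps^{1/2}E_\zeta,\mu^{1/2}H_\zeta)$ is exactly the paper's Lemma~\ref{lem:MaxwellHelmholtz}, and your treatment of~(iii) via div--curl estimates matches Proposition~\ref{prop:GradientEstimates}. But the central step of your scheme --- ``Applying the Helmholtz LAP to the displayed equation then yields a uniform bound'' --- relies on a Limiting Absorption Principle for Helmholtz systems with \emph{non-Hermitian} zero-order perturbations that the paper does not establish and that, in fact, does not follow from the earlier sections. Theorem~\ref{thm:main-Helmholtz-sys} is proved only for Hermitian $V=\ov V^T$; the injectivity of $I-K(\zeta)$ there hinges on $\Imag\langle u,Vu\rangle=0$ (see Remark~\ref{rem:real-valuedness}), and the matrix $\cV(\zeta)$ appearing here is genuinely not Hermitian, even for real $\zeta$, because of the off-diagonal blocks $\pm i\zeta\,v\times$. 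Remark~\ref{rem:absenceEV} explains why one cannot sidestep this by passing to real and imaginary parts either. So there is no ready-made Helmholtz LAP to invoke, and your argument has a genuine gap precisely where the main difficulty of the theorem lies.

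What the paper actually does is replace invertibility of $I-\mathcal K(\zeta)$ by the weaker quantitative \emph{injectivity estimate} of Proposition~\ref{prop:InjectivityEstimate},
\[
\|u\|_{q_1}+\|u\|_{q_2}\lesssim\|(I-\mathcal K(\zeta))u\|_{q_1}+\|(I-\mathcal K(\zeta))u\|_{q_2}+\big|\Imag\langle u,\cV(\zeta)u\rangle\big|^{1/2},
\]
and then controls the extra term $|\Imag\langle u_\zeta,\cV(\zeta)u_\zeta\rangle|$ not from the Helmholtz structure at all but from the original Maxwell equations: Proposition~\ref{prop:ExtraCondition} integrates by parts using $\nabla\times E_\zeta$, $\nabla\times H_\zeta$ from~\eqref{eq:Maxwell-harmonic} to show that $\int v\cdot\Real(u_m^\zeta\times\ov{u_e^\zeta})$ reduces to terms of size $O(|\Imag\zeta|)\|u_\zeta\|^2+O(\|J^\zeta\|\,\|u_\zeta\|)$. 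This is exactly the content that is missing in your proposal: the bound on the non-Hermitian defect is a genuinely Maxwell-specific cancellation, not a consequence of any generic Helmholtz resolvent theory. Without it, the scheme does not close. You should also flag that assumption~(A3), which you correctly note is used to control the source as $\zeta\to\omega$, in fact enters precisely in this injectivity estimate and in Proposition~\ref{prop:ExtraCondition}, not in the Helmholtz reduction itself.
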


\medskip

\begin{rem} ~
  \begin{itemize}
    \item[(a)] We shall not provide explicit values for the constants $C(\omega)$, but content ourselves with
    proving estimates that are uniform on compact subsets of $\R\setminus \{0\}$. This is indeed sufficient
    for the existence of a map $\omega\mapsto C(\omega)$ that is continuous on $\R\sm\{0\}$. 
    \item[(b)] The convergence in  $H^1_{\loc}(\R^3;\C^6)$ is stated for simplicity. By
    standard elliptic regularity theory, convergence holds in $W^{2,r}_{\loc}(\R^3;\C^6)$ where $r\geq 1$
    depends on the local regularity of $\eps,\mu,J_\textup{e},J_\textup{m}$.  
    \item[(c)] If the currents $J_\textup{e}^\zeta,J_\textup{m}^\zeta$ also converge in $L^q(\R^3;\C^3)$ then
     one finds $(E_\zeta,H_\zeta)\to (E_\omega^\pm,H_\omega^\pm)$ in $W^{1,q}(\R^3;\C^6)$.
  \end{itemize}
\end{rem}

In the context of Limiting Absorption Principles of time-harmonic Maxwell's equations only few results are
available. Picard, Weck and Witsch proved a Limiting Absorption Principle in weighted $L^2$-spaces (similar
to~\cite{Agmon_Spectral}) for time-harmonic Maxwell's equations in an exterior domain with boundary conditions $\nu \wedge E = 0$, see~\cite[Theorem 2.10]{PicWecWeit_THMaxwell}.
Since this result is based on Fredholm's Alternative, the frequencies $\omega\in\R\sm\{0\}$ are assumed not to
belong to a discrete (possibly empty) set of eigenvalues. As in Agmon's fundamental paper~\cite{Agmon_Spectral} about the
perturbed Helmholtz equation, the permittivity $\eps$ and permeability $\mu$ are assumed
to be isotropic and to decay to some positive constants at infinity faster than $|x|^{-1}$. Despite some
quantitative differences, this is similar to our assumptions (A1),(A2),(A3). 
The Limiting Absorption Principle in weighted $L^2$-spaces and applications to Strichartz estimates for
Maxwell's Equations in isotropic media were recently proved by D'Ancona and Schnaubelt~\cite{DAnSchnaubelt}.

In the anisotropic case, still in weighted $L^2$-spaces, related results were obtained by
Pauly~\cite[Theorem~3.5]{Pauly_Low}. We note that these results also apply to discontinuous $\eps,\mu$, which
indicates that (A1) may be relaxed. For results in the $L^p-L^q$-setting as in this paper we refer
to~\cite{Ski,MaSki}. 
In a recent work~\cite[Theorem 2.1]{Nguyen_2020} by Nguyen and Sil,
in an $L^2$-framework, the Limiting Absorption Principle is studied in the case
of anisotropic sign-changing coefficients on $\R^3$ that are used to describe metamaterials.
We also mention~\cite[Lemma 4-Lemma 6]{Nguyen2018} where the author studied existence, uniqueness and stability of solutions to the anisotropic Maxwell system when the coefficients are constant outside a bounded set and piecewise $C^1(\R^3)$.

 As far as we can see, our contribution is
the first dealing with $L^p$-estimates for time-harmonic Maxwell's equations. All the aforementioned results
relate to the three-dimensional Maxwell operator. A Limiting Absorption Principle in periodic 1D waveguides
can be found in the recent preprint by De Nittis, Moscolari, Richard and Tiedra de Aldecoa~\cite{nittis2019spectral}. 
Further relevant tools for Limiting Absorption Principles such as Carleman inequalities or Unique Continuation
results can be found in~\cite{Okaji_SUCP,EllerYamamoto_Carleman}.

\medskip

In view of the Limiting Absorption Principle from Theorem~\ref{thm:main} and in particular of the
resolvent-type estimate~\eqref{eq:resolvent-estimate}, it must be expected that embedded eigenvalues of the
Maxwell operator do not exist under the assumptions of Theorem~\ref{thm:main}. In the classical Fredholm
theoretical approaches from~\cite{Agmon_Spectral,PicWecWeit_THMaxwell} this is even a necessary condition for
the Limiting Absorption Principle to hold. Nonetheless, the proof of our Theorem~\ref{thm:main} does not
allow to derive the absence of embedding eigenvalues directly.  
For this reason we prove the absence of
embedded eigenvalues separately. This is object of the following result. As we shall see in
Section~\ref{section:absenceEV}, this heavily relies on Carleman estimates by Koch and
Tataru~\cite{KochTat_Absence}.

\begin{thm}\label{thm:absenceEV}
   Assume (A1),(A2) for some $\eps_\infty, \mu_\infty>0$, $\zeta\in \C$  and let $(E,H)\in
   H^1_{\loc}(\R^3;\C^6)$ be a weak solution of the homogeneous ($J_\textup{e}=J_\textup{m}=0$) time-harmonic Maxwell
   system~\eqref{eq:Maxwell-harmonic} that satisfies $(1+|x|)^{\tau_1 -\frac{1}{2}}(|E|+|H|)\in
   L^2(\R^3)$ for some $\tau_1>0$. Then $E\equiv H\equiv 0$.
\end{thm}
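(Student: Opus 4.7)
The plan is to reduce the first-order Maxwell system to a Helmholtz system with a zero-order perturbation, apply the Koch--Tataru Carleman estimates \cite{KochTat_Absence} to force vanishing at infinity, and finally propagate this inward by a weak unique continuation argument.

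First I would rewrite the homogeneous system \eqref{eq:Maxwell-harmonic} in terms of the rescaled unknown $u:=(\eps^{1/2}E,\mu^{1/2}H)$, as already announced in part~(ii) of Theorem~\ref{thm:main}. Taking $\nabla\times$ of both Maxwell equations, using the identity $\nabla\times\nabla\times = -\Delta + \nabla\diver$ together with the divergence constraints $\nabla\cdot(\eps E)=\nabla\cdot(\mu H)=0$ and then rewriting everything in terms of $u$, one arrives at a vector Helmholtz equation of the form
\[
  (\Delta + \zeta^2\eps_\infty\mu_\infty)u + \cV(\zeta)u = 0,
\]
in which the matrix-valued potential $\cV(\zeta)$ is a polynomial expression in exactly the quantities appearing in hypothesis (A2): $\nabla(\eps\mu)$, $\eps\mu-\eps_\infty\mu_\infty$, $|\nabla\eps|^2$, $|\nabla\mu|^2$, $D^2\eps$, $D^2\mu$. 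Assumption (A2) therefore guarantees $\cV(\zeta)\in L^{3/2}(\R^3)+L^2(\R^3)$, which places the problem inside the class of short-range perturbations of the Laplacian treated by Koch and Tataru.

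Second, I would apply the Carleman estimates of \cite{KochTat_Absence} with a radial convex weight $\phi_\tau(|x|)$ and large parameter $\tau$, which provide an inequality of the schematic form
\[
  \|e^{\tau\phi_\tau}u\|_{Y}\;\les\;\|e^{\tau\phi_\tau}(\Delta + \zeta^2\eps_\infty\mu_\infty)u\|_{Y'}
\]
on an exterior region $\{|x|>R\}$, where $Y,Y'$ are weighted $L^p$-spaces tailored to the Helmholtz operator at (possibly complex) energy $\zeta^2\eps_\infty\mu_\infty$. Plugging in the equation, the right-hand side becomes $\|e^{\tau\phi_\tau}\cV(\zeta)u\|_{Y'}$, and thanks to the $L^{3/2}+L^2$ integrability of $\cV(\zeta)$ combined with the polynomial decay of $u$ encoded by $(1+|x|)^{\tau_1-1/2}(|E|+|H|)\in L^2(\R^3)$, H\"older's inequality lets me absorb this term into the left-hand side for $R$ and $\tau$ large enough. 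Letting $\tau\to\infty$ then forces $u\equiv 0$ on $\{|x|>R\}$.

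Finally I would invoke a weak unique continuation result for second-order elliptic systems with coefficients of the regularity supplied by (A1)--(A2) (of Jerison--Kenig type, or directly extracted from \cite{KochTat_Absence}) to propagate the vanishing from the exterior region into the whole of $\R^3$; the uniform positivity of $\eps,\mu$ from (A1) then yields $E\equiv H\equiv 0$. The main obstacle, in my view, is the absorption step in the Carleman argument: one must verify that the various contributions to $\cV(\zeta)$---of mixed regularity and decay, according to the $L^{3/2}+L^2$ splitting in (A2)---can all be controlled simultaneously by the weighted norm on the left, that the argument is robust over the full range $\tau_1>0$ of allowed decay (not only the standard $L^2$-case $\tau_1\geq 1/2$), and that it is uniform in the complex parameter $\zeta\in\C$ whose squared real and imaginary parts may vanish or change sign. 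Splitting $\cV(\zeta)$ along (A2) and tuning the weight $\phi_\tau$ to $\tau_1$ should handle this, but the bookkeeping is delicate.
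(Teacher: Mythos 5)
Your reduction to a zero-order Helmholtz system and the subsequent Koch--Tataru Carleman argument match the paper's strategy, but \emph{only} for $\zeta\in\R\sm\{0\}$, i.e.\ when $\lambda:=\zeta^{2}\eps_\infty\mu_\infty$ is a real positive energy. The theorem is stated for all $\zeta\in\C$, and for $\zeta\in\C\sm\R$ your approach breaks down. The Carleman bound of Koch--Tataru used via Theorem~\ref{thm:abs-emb-evs} is formulated for $\zeta\in\R_{>0}$; if you try to apply it with $\Imag(\lambda)\neq 0$, you must absorb the constant zero-order term $i\,\Imag(\lambda)I$ into the potential, but this term has no decay whatsoever and cannot be made small on exterior regions $\{|x|>R\}$. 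The paper spells this out explicitly in Remark~\ref{rem:absenceEV}: after decomposing into real and imaginary parts one encounters a constant off-diagonal matrix $W$ with $\|W\chi_{\R^3\sm B_R}\|_{3/2}$, $\|W\chi_{\R^3\sm B_R}\|_{2}$ not tending to $0$, so the absorption step in the Carleman argument fails. No amount of ``tuning the weight $\phi_\tau$'' fixes this, because the obstruction is the lack of decay of the potential, not a quantitative bookkeeping issue.

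What the paper actually does for $\zeta$ with $\Imag(\zeta^2)\neq 0$ or $\Real(\zeta^2)\leq 0$ is much more elementary and completely avoids Carleman estimates: from the second-order form $\nabla\times(\mu^{-1}\nabla\times E)=\zeta^2\eps E$, test with $\chi\bar E$ for a radial cutoff $\chi=\chi^*(\cdot/R)$, show that the commutator term $\int \mu^{-1}(\nabla\times E)\cdot(\nabla\chi\times\bar E)$ is $O(R^{-2\tau_1})$ using exactly the weighted $L^2$ decay hypothesis, let $R\to\infty$, and then take the imaginary part (if $\Imag(\zeta^2)\neq 0$) or the real part (if $\Real(\zeta^2)\leq 0$) of the resulting identity to conclude $E\equiv 0$, whence $H\equiv 0$. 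The Carleman machinery is reserved for the genuinely hard case $\zeta^2\eps_\infty\mu_\infty>0$, where $\lambda$ sits inside the essential spectrum. You should therefore split into these two cases; your Steps 2 and 3, together with the reduction Lemma~\ref{lem:MaxwellHelmholtz}, then complete the remaining real-$\zeta$ case as in the paper's proof of Theorem~\ref{thm:abs-emb-evs}.
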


In relation with non-existence of eigenvalues of the Maxwell operator, we should mention the remarkable
contribution of Eidus~\cite{EidusMaxwell}.
Here the author proves that for sufficiently smooth, real-valued, symmetric and positive definite matrices
$\varepsilon$ and $\mu$ there are no nontrivial solutions of the homogeneous ($J_\textup{e}=J_\textup{m}=0$)
time-harmonic Maxwell system~\eqref{eq:Maxwell-harmonic} with $\zeta\in \R\setminus \{0\}$ imposing that the
matrix-valued coefficients $\varepsilon$ and $\mu$ are short-range perturbations of the identity and satisfy
the repulsivity condition
\begin{equation} \label{eq:repulsive-cond}
	\langle\partial_r (r \varepsilon) u, u \rangle_{L^2}\geq \gamma \|u\|_{L^2}^2, \qquad r=|x|,
\end{equation}
for some positive $\gamma$ (and analogous condition for $\mu$). We stress that in~\cite{EidusMaxwell} the necessity of condition~\eqref{eq:repulsive-cond} for the absence of bound states is not discussed, as a matter of fact in~\cite{EidusMaxwell} condition~\eqref{eq:repulsive-cond} is only introduced as a \textit{sufficient} condition. We refer the reader to~\cite[Theorem
4.2]{EidusMaxwell} for details. In the same paper an analogous result in the
isotropic case is proved, which shares some similarity with Theorem~\ref{thm:absenceEV}. In this case it
turns out that the nonexistence of eigenvalues follows without the need of the repulsive
condition~\eqref{eq:repulsive-cond} stated above (see~\cite[Theorem 4.4]{EidusMaxwell}).
In the proofs of Theorem~\ref{thm:main} and~\ref{thm:absenceEV} we will use that for any given solution
$(E,H)$ of the Maxwell system~\eqref{eq:Maxwell-harmonic} the function $(\tilde E,\tilde H):=
(\eps^\frac{1}{2} E,\mu^\frac{1}{2} H)$ solves a linear Helmholtz system with complex-valued zeroth order
perturbations. The difficulty arises from the fact this perturbation is non-Hermitian in general.  The tools
that we will need in the analysis of this particular system are inspired from the theory for Helmholtz systems with Hermitian perturbations that we will develop first.
For the sake of simplicity we restrict our attention to the case $n\geq 3$.

\begin{thm}\label{thm:main-Helmholtz-sys}
	Let $n,m\in \N, n\geq 3,$ $\zeta\in \C\sm\R.$ Assume $V=\ov{V}^T\in L^\frac{n}{2}(\R^n;\C^{m\times m})+
	L^{\frac{n+1}{2}}(\R^n;\C^{m\times m})$ and that $1\leq p, q\leq \infty$ satisfy
	$$
	\frac{n+1}{2n}<\frac{1}{p}\leq 1,\qquad
   \frac{(n-1)^2}{2n(n+1)} <\frac{1}{q}<\frac{n-1}{2n},\qquad
\frac{2}{n+1}\leq \frac{1}{p}-\frac{1}{q}\leq \frac{2}{n}.  
	$$ 
	Then $R(\zeta):= (\Delta I_{m} + V(x) + \zeta I_{m})^{-1}\colon L^p(\R^n;\C^m)\to L^q(\R^n;\C^m)$ exists
	as a bounded linear operator and extends by pointwise convergence to the positive half-axis via
	\begin{equation} \label{eq:continuousExtension}
	   R(\lambda\pm i0)f :=  \lim_{\zeta\to \lambda\pm i0} R(\zeta)f, 
	  \qquad \text{in}\; L^q(\R^n;\C^m),
	\end{equation} 
	for any $f\in L^p(\R^n;\C^m)$ and $\lambda>0$. 
\end{thm}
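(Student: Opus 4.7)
The plan is to invert $\Delta I_m+V+\zeta I_m$ by a Lippmann-Schwinger reduction to a perturbation of the identity. Denoting by $R_0(\zeta):=(\Delta+\zeta)^{-1}$ the scalar free Helmholtz resolvent acting componentwise on $\C^m$-valued functions, any solution of $(\Delta+V+\zeta)u=f$ formally satisfies
\begin{equation*}
  (I+R_0(\zeta)V)u=R_0(\zeta)f.
\end{equation*}
The task thus becomes to show that $I+R_0(\zeta)V$ is invertible on $L^q(\R^n;\C^m)$, with inverse bounded uniformly on compact subsets of $\{\Imag\zeta\neq 0\}$, and to extend this inversion continuously to the boundary values $\lambda\pm i0$.

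The first ingredient I would use is the uniform free resolvent estimate of Kenig-Ruiz-Sogge and Gutierrez: for every pair $(a,b)$ with $\frac{n+1}{2n}<\frac1a$, $\frac1b<\frac{n-1}{2n}$ and $\frac{2}{n+1}\leq\frac1a-\frac1b\leq\frac2n$, the operator $R_0(\zeta)$ is uniformly bounded $L^a(\R^n)\to L^b(\R^n)$ for $\zeta\in\C\sm\{0\}$, and $R_0(\zeta)f\to R_0(\lambda\pm i0)f$ strongly in $L^b$ as $\zeta\to\lambda\pm i0$. Decomposing $V=V_1+V_2$ with $V_1\in L^{n/2}$, $V_2\in L^{(n+1)/2}$, Hölder's inequality gives $V_j\colon L^q\to L^{r_j}$ where $\frac{1}{r_1}=\frac1q+\frac2n$ and $\frac{1}{r_2}=\frac1q+\frac{2}{n+1}$, so that the two gaps $\frac{1}{r_j}-\frac{1}{q}$ realise precisely the endpoints of the admissible interval $[\frac{2}{n+1},\frac{2}{n}]$. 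The hypothesis $\frac{1}{q}>\frac{(n-1)^2}{2n(n+1)}$ is exactly equivalent to $\frac{1}{r_2}>\frac{n+1}{2n}$, which together with $\frac{1}{r_1}>\frac{1}{r_2}$ places both pairs in the admissible region. Consequently $R_0(\zeta)V\colon L^q\to L^q$ is uniformly bounded.

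The second step is a Fredholm alternative. Approximating $V$ in $L^{n/2}+L^{(n+1)/2}$ by smooth compactly supported matrix-valued functions and combining the local $W^{2,r}$-regularity produced by $R_0(\zeta)$ with the Rellich-Kondrachov theorem, $R_0(\zeta)V$ appears as a norm limit of compact operators on $L^q$, hence is itself compact. Thus $I+R_0(\zeta)V$ is Fredholm of index zero and invertibility reduces to triviality of its kernel. For $\zeta\in\C\sm\R$, any $u\in L^q$ with $(I+R_0(\zeta)V)u=0$ solves $(\Delta+V+\zeta)u=0$ distributionally; exploiting the exponential decay of the Green kernel of $\Delta+\zeta$ away from the real axis, a standard bootstrap places $u$ in $H^1(\R^n;\C^m)\cap L^2(\R^n;\C^m)$, after which the Hermiticity $V=\ov V^T$ together with $\Imag\zeta\neq 0$ forces $u\equiv 0$ by taking the imaginary part of the $L^2$-identity $\langle(\Delta+V+\zeta)u,u\rangle=0$. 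This yields the desired bound for $R(\zeta):=(I+R_0(\zeta)V)^{-1}R_0(\zeta)\colon L^p\to L^q$, uniformly on compact subsets of $\C\sm\R$.

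The main difficulty lies in extending the construction to the half-axis $\zeta=\lambda\pm i0$. The kernel of $I+R_0(\lambda\pm i0)V$ now consists of $L^q$-solutions of $(\Delta+V+\lambda)u=0$ satisfying the outgoing (resp.\ incoming) radiation condition encoded in $u=-R_0(\lambda\pm i0)Vu$, and ruling out such elements is the heart of the proof. I would combine Agmon-Hörmander Besov-space Rellich identities for the free Helmholtz operator with a strong unique continuation argument of Koch-Tataru type, in the spirit of the tools developed later for Theorem~\ref{thm:absenceEV}, to conclude that $u\equiv 0$. Once kernel triviality at the boundary is secured, stability of compact perturbations of the identity yields uniform invertibility of $I+R_0(\zeta)V$ on a $\C\sm\R$-neighbourhood of $\lambda\pm i0$, and the strong $L^q$-convergence of $R_0(\zeta)f$ from the first step propagates to the perturbed resolvent via $R(\zeta)f=(I+R_0(\zeta)V)^{-1}R_0(\zeta)f$, producing the continuous extension~\eqref{eq:continuousExtension}.
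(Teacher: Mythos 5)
Your proposal is correct and follows essentially the same route as the paper: reduce to the Lippmann--Schwinger operator $I+R_0(\zeta)V$ on $L^q$, establish uniform boundedness and compactness via the Kenig--Ruiz--Sogge/Guti\'errez free resolvent estimates and a decomposition $V=V_1+V_2\in L^{n/2}+L^{(n+1)/2}$, and prove kernel triviality up to the boundary $\lambda\pm i0$ by combining Hermiticity of $V$ with decay estimates and the Koch--Tataru Carleman bound. The only cosmetic difference is that the paper deduces the weighted-$L^2$ decay $(1+|\cdot|)^{\tau_1-1/2}u\in L^2$ needed for the Carleman step from the Goldberg--Schlag $L^p$-based trace and decay lemmas rather than from Agmon--H\"ormander Besov-space Rellich identities, which is the device that bridges the $L^q$ setting to the $L^2$-weighted hypotheses of the unique continuation theorem.
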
 

\medskip

 \begin{rem} ~
   \begin{itemize} 
     \item[(a)] We will actually prove a slightly stronger result than Theorem~\ref{thm:main-Helmholtz-sys}.
     We will show that all conclusions mentioned in this theorem are true assuming
     $V=\ov{V}^T\in L^\frac{n}{2}(\R^n;\C^{m\times m})+L^{\tilde\kappa}(\R^n;\C^{m\times m})$ with
     $\frac{n}{2}\leq \tilde\kappa\leq\frac{n+1}{2}$ where the  condition $\frac{(n-1)^2}{2n(n+1)}
     <\frac{1}{q}<\frac{n-1}{2n}$ is replaced by the weaker one $\frac{n+1}{2n}-\frac{1}{\tilde\kappa}
     <\frac{1}{q}<\frac{n-1}{2n}$. In other words, Theorem~\ref{thm:main-Helmholtz-sys} corresponds to the
     special case $\tilde\kappa=\frac{n+1}{2}$ of this stronger result.
     \item[(b)] 
     We assume $\zeta\in\C\sm\R$ in order to guarantee the existence of the resolvent $R(\zeta)$. 
     Notice that sign or smallness assumptions on $V$ may ensure the existence of well-defined resolvents on 
     given parts of the real line. 
      \item[(c)] The limit in~\eqref{eq:continuousExtension} is a pointwise limit and it is natural to ask
     whether this convergence also holds in the uniform operator topology. Ideas related to this question can
     be found in \cite[p.46]{HuangYaoZheng}.  
     \item[(d)] The two-dimensional case $n=2$ can in principle be discussed using the same techniques. We
     expect that the same statements hold for $V=\ov{V}^T \in L^\kappa(\R^2;\C^{m\times
     m})+L^{\frac{3}{2}}(\R^2;\C^{m\times m})$ for some $\kappa>1$.
   \end{itemize} 
 \end{rem}

To explain  to what extent Theorem~\ref{thm:main-Helmholtz-sys} improves 
earlier results, we first provide a short summary of the available literature
about the scalar case $m=1$.
Goldberg and Schlag~\cite{GolSch_LAP} were the first to go beyond the Hilbert space framework in which, since
Agmon's work~\cite{Agmon_Spectral}, the Limiting Absorption Principles for self-adjoint Schr\"odinger operators were studied. They proved an $L^p$-type Limiting Absorption Principle that inspired our Theorem~\ref{thm:main-Helmholtz-sys}. For $n=3$ they showed
\begin{equation}\label{eq:G-S}
	\sup_{0<\delta<1,\, \lambda\geq \lambda_0}\|R(\lambda +i\delta) \|_{\frac{4}{3}\to 4}\leq C(\lambda_0, V) \lambda^{-\frac{1}{4}}, \qquad \lambda_0>0,
\end{equation}
provided that $V\in L^\frac{3}{2}(\R^3)\cap L^p(\R^3),$ $p>\frac{3}{2}.$  
In \cite[Proposition~1.3]{GolSch_LAP} it is even stated that the assumption on $V$ can be considerably
weakened to $V\in L^{\frac{3}{2}+\eps}(\R^3)+ L^{2-\eps}(\R^3), \eps>0$, provided that embedded
eigenvalues for Schr\"odinger operators with such pertubations do not exist. The latter was meanwhile proved by Koch and
Tataru~\cite[Theorem~3]{KochTat_Absence}.
Huang, Yao and Zheng ~\cite{HuangYaoZheng} generalized the result from~\cite{GolSch_LAP}
to the higher-dimensional case by proving that the estimate
\begin{equation*}
	\sup_{0<\delta<1, \lambda\geq\lambda_0}\|R(\lambda +i\delta) \|_{\frac{2(n+1)}{n+3}\to
	\frac{2(n+1)}{n-1}}\leq C(\lambda_0,V) \lambda^{-\frac{1}{n+1}}, \qquad \lambda_0>0,
\end{equation*}
holds for all potentials $V\in L^\frac{n}{2}(\R^n)\cap L^p(\R^n),$ $p>\frac{n}{2}$ and all
$n\in\N,n\geq 3$.
The most recent result in this direction is due to Ionescu and Schlag~\cite{IonSch_AgmonKatoKuroda} 
where a new Limiting Absorption Principle was proved for a much larger class of potentials than
the ones covered by the aforementioned results. As our Theorem~\ref{thm:main-Helmholtz-sys}, this 
result covers all  $V\in L^{\frac{n}{2}}(\R^n)+L^{\frac{n+1}{2}}(\R^n)$ as one can check 
from (1.19) in~\cite{IonSch_AgmonKatoKuroda}. In their Theorem~1.3~(d), the  resolvent
estimate 
\begin{equation*} 
	\sup_{\lambda \in I,\, 0<\delta\leq 1} \|R(\lambda \pm i \delta)\|_{X\to X^*}\leq C(I,V)
\end{equation*}    
is proved where $I\subset \R \setminus \{0\}$ is a compact set that does not intersect 
the set of nonzero eigenvalues. For the precise definition of the Banach space $X$   we refer
to~\cite[p.400]{IonSch_AgmonKatoKuroda}. Notice that these estimates are
self-dual in the sense that they bound the operator norms of the resolvents acting between some Banach space
and the corresponding dual space. In this respect, our result from Theorem~\ref{thm:main-Helmholtz-sys} is
more general than~\cite{IonSch_AgmonKatoKuroda}. Compared to~\cite{HuangYaoZheng,GolSch_LAP},
Theorem~\ref{thm:main-Helmholtz-sys}  requires for less integrability of the potential (including endpoint
cases) and therefore improves the known results. Being given~\cite[Theorem~3]{KochTat_Absence}, the
generalization to Helmholtz systems ($m\geq 2$) is rather trivial.
 
Concerning Limiting Absorption Principles for Helmholtz equations ($m=1$)
in other settings and under different assumptions we would like to mention the papers
\cite{CacDAncLuc_LAP1,CacDAncLuc_LAP2} (Morrey-Campanato spaces) and  
\cite{Royer_LAP} for dissipative Helmholtz operators, \cite{Nguyen_LAP} (sign-changing coefficients), 
\cite{Man_LAP,BirmanYafaev_Scattering,Radosz_LAP} (periodic potentials) and 
\cite{BoucletMizutani_Uniform,MizZhaZhe_Uniform} (critical potentials).

\medskip

As anticipated (see again
Proposition~1.3 in~\cite{GolSch_LAP}), in the proof of a Limiting Absorption Principle, excluding embedded
eigenvalues usually represents the discriminating step where the hypotheses on the potential come
into play. The non-existence of positive eigenvalues for the Schr\"odinger operator has a long history. 
In 1959, Kato~\cite{Kato} proved that the Schr\"odinger operator $-\Delta + V$ in $L^2(\R^n)$ has no embedded
eigenvalues if $V$ is continuous and such that $V(x)=o(|x|^{-1})$ as $|x|\to \infty.$ Later
Simon~\cite{Simon} improved Kato's result allowing also for long-range potentials. More precisely he
considered potentials V which admit the decomposition $V=V_1+ V_2,$ with $V_1(x)=o(|x|^{-1}), V_2(x)=o(1),$
and $\omega_0:=\limsup_{|x|\to \infty} x\cdot \nabla V_2(x)<\infty.$ Under these conditions he
proved, in three dimensions, the absence of eigenvalues above the positive threshold $\omega_0.$ This
result was later improved by Agmon in~\cite{Agmon70}, where he lowered the threshold to $\omega_0/2$
covering also all dimensions. Similar results were also proved by Froese et al. in~\cite{Froese2} (see
also~\cite{FKV1,FKV2, CFK, AHK} for related results). Using a different approach based on Carleman
estimates Ionescu-Jerison in~\cite[Theorem 2.5]{IonJer_OnTheAbsence} showed that for all $\eps>0$ there are
$V\in L^{\frac{n+1}{2} + \eps}(\R^n)$ such that the scalar Schr\"odinger operator $\Delta+V$ has embedded
eigenvalues with rapidly decaying eigenfunctions. Thus, the exponent $\frac{n+1}{2}$ in our assumption is
optimal. Notice that, as far as asymptotic decay conditions are investigated, a higher exponent in the
Lebesgue space allows to cover a wider class of perturbations. On the other hand, the optimality of the
exponent $\frac{n}{2}$ is not entirely clear, even though it is known that standard properties of
Schr\"odinger operators like semi-boundedness need not hold for potentials with lower integrability.
In~\cite{KenNad_Counterexample},\cite[Theorem~1.a)]{KochTataru_Counterexamples} it is shown that 0 can be an
embedded eigenvalue when potentials in the class $L^{\kappa}_{\loc}(\R^n)\cap L^1(\R^n)$ with
$\kappa<\frac{n}{2}$ are considered, see also~\cite[Remark~6.5]{JerisonKenig}. Up to the authors' knowledge,
a counterexample for non-zero eigenvalues is not known.

\medskip

 As customary, a basic tool for ruling out embedded eigenvalues 
is a suitable Carleman estimate. In our case, due to the weak and almost optimal conditions $V=\ov{V}^T\in
L^\frac{n}{2}(\R^n;\C^{m\times m})+ L^\frac{n+1}{2}(\R^n;\C^{m\times m}),$ we need to use the fine Carleman
estimate for scalar Schr\"odinger operators provided by Koch and Tataru in~\cite{KochTat_Absence}, which
allows to cover this wide class of potentials. We stress that the possibility to use a scalar Carleman
estimate in our vector-valued setting only works because the chosen weight in the Carleman bound
in~\cite[Proposition~4]{KochTat_Absence} \textit{does not} depend on the solution itself. Indeed, this fact ultimately permits to sum up the  estimates obtained for the
components and to get an estimate for the full vector field.  
Analogue results for Helmholtz systems with first order perturbations  cannot  be obtained
in this way since the weights in the corresponding Carleman estimates 
from~\cite{KochTat_Absence} (see Theorem~8 and Theorem~11) depend  on the solution itself. Hence,  it is not
guaranteed that one Carleman weight works for all components, which is why  
systems with first order perturbations appear to be more difficult.

\medskip

The rest of the paper is organized as follows: Section~\ref{sec:LAP_Helmholtz} is devoted to the proof of the
Limiting Absorption Principle for Helmholtz systems with Hermitian coefficients stated in
Theorem~\ref{thm:main-Helmholtz-sys}. The aforementioned  relation  between  Maxwell's
equations~\eqref{eq:Maxwell-harmonic} and   Helmholtz systems will be discussed in
Section~\ref{section:absenceEV}. Here we also provide the proof of Theorem~\ref{thm:absenceEV} about the
absence of eigenvalues for the Maxwell system~\eqref{eq:Maxwell-harmonic}. Finally, in Section~\ref{sec:LAP},
we prove the most involved result of the paper, namely the Limiting Absorption Principle for
Maxwell's equations~\eqref{eq:Maxwell-harmonic} from Theorem~\ref{thm:main}.

\medskip

We conclude this introduction with the main notations used in this paper.

\medskip

\subsection*{Notations} ~ \\ 
	* For $Z\in \{\R,\C,\R^m,\C^m,\R^{m\times m},\C^{m\times m}\}$ we shall shortly write
    $\|\cdot\|_p:= \|\cdot\|_{L^p(\R^n;Z)}$.  \\
    * $\mathcal B(X,Y)$ denotes the Banach spaces of bounded linear operators between Banach spaces $X,Y$
    equipped with the standard operator norm.   \\
    * We write  $V\in L^{[p_1,p_2]}(\R^n;Z):= L^{p_1}(\R^n;Z) +
    L^{p_2}(\R^n;Z)$ if $V$ can be decomposed as $V=V_1+V_2,$ with $V_1\in L^{p_1}(\R^n;Z)$ and $V_2\in
    L^{p_2}(\R^n;Z).$\\
    * $\chi_B$ represents the indicator of a measurable subset $B\subset\R^n.$ \\
    * $\zeta\to \lambda\pm i0$ means $\zeta\to \lambda$ with $\pm \Imag(\zeta) \searrow 0$. \\
    * $I_m$ denotes the identity matrix in $\R^{m\times m},$ $m\in \N.$ \\
    * The notation $I$ is   used for the identity operator in some function space \\
    * We use the notation $\lesssim$ where we want to indicate that we have an inequality $\leq$ up to a
    constant factor which does not depend on the relevant parameters.\\
    * $H^1(\curl;\R^3):=\{u\in L^2(\R^3;\C^3)\colon \curl u \in L^2(\R^3;\C^3)\}.$\\
    * We adopt the following definition for the Fourier transform
    \begin{equation*}
	\widehat{f}(\xi):=\mathcal{F}(f)(\xi):=\frac{1}{(2\pi)^{n/2}}\int_{\R^n} e^{-ix\xi}f(x)\dx.
\end{equation*}

\section{The LAP for Helmholtz systems -- Proof of Theorem~\ref{thm:main-Helmholtz-sys}}
\label{sec:LAP_Helmholtz}

This section is concerned with the proof of Theorem~\ref{thm:main-Helmholtz-sys} that relies on a well-known
perturbative argument based on Fredholm operator theory. This strategy has its origin in the pioneering work
by Agmon~\cite{Agmon_Spectral} where it was used to establish a Limiting Absorption Principle for
Schr\"odinger operators acting between weighted $L^2$-spaces.  Since then, this technique
has permeated many works in the subject. 
We refer to~\cite{Sommerfeld,Rellich,Eidus} for some remarkable earlier contributions.

\medskip

\noindent 
 We summarize Agmon's approach as follows.
Consider a reference operator $H_0$ and let $H$ be a suitable perturbation  of $H_0$, let $\zeta\in\C$. The
first step is to prove the existence of a right inverse
 $R_0(\zeta)$ for the operator  $H_0+\zeta$ satisfying an estimate of the form
\begin{equation}\label{eq:resolvent-gen-free}
	\|R_0(\zeta) f\|_{X_1}\leq C(\zeta)\|f\|_{X_2}, 
\end{equation}  
where $X_1, X_2$ are Banach spaces. In Agmon's paper, for spectral parameters $\zeta:=\lambda>0$ and the
Laplacian $H_0=\Delta$ such right inverses are constructed via the classical Limiting Absorption Principle for Helmholtz
equations, namely by investigating the mapping properties of the resolvents $R_0(\zeta)$ as
 $\zeta\to\lambda\pm i0, \lambda>0$, see
Theorem~4.1~\cite{Agmon_Spectral}. This is a nontrivial task given that every such $\lambda$ belongs to the essential spectrum of the
(negative) Laplacian and therefore no such limits can exist when $X_1=X_2=L^2(\R^n).$ In
\cite{Agmon_Spectral,Agmon_ARepresentation} this was circumvented by introducing suitable and, as a
matter of fact, optimal weighted $L^2-$spaces such that the operators $R_0(\zeta)$ converge in 
$\mathcal L(X_2,X_1)$ as $\zeta\to\lambda\pm i0$  (with different limits). 
In order to extend the estimate~\eqref{eq:resolvent-gen-free} to the perturbed operator $H$ one assumes  that
$V:=H-H_0$ is a relatively compact perturbation of $H_0$, meaning that the linear operator $K(\zeta):=-R_0(\zeta)V$ is compact on
$X_1$. In view of the formula 
$$
	H+\zeta=(H_0 + \zeta) (I -K(\zeta))
$$
a right inverse $R(\zeta)$ for the operator $H+\zeta$ is given by  
$$
	R(\zeta) :=(I -K(\zeta))^{-1} R_0(\zeta) 
$$
as soon as $I - K(\zeta)\colon X_1 \to X_1$ is bijective. By Fredholm theory, it suffices to
verify injectivity, which is the most delicate part of the argument. Once this is
achieved, one obtains the desired estimate 
\begin{equation}\label{eq:resolvent-gen-perturbed}
	\|R(\zeta)f\|_{X_1}\leq C(\zeta) \|(I -K(\zeta))^{-1}\|_{X_1\to X_1} \|f\|_{X_2}.
\end{equation} 
We stress that a good control of the right hand side with respect to $\zeta$ will be of central interest in
the following.

\medskip

In our context the reference operator $H_0$ and its perturbation $H$ are the free and
the perturbed matrix-valued Schr\"odinger operators, namely
\begin{equation*}
	H_0:=\Delta , \qquad H:= \Delta   + V(x),
\end{equation*}	
where $V=\ov{V}^T\in L^{[\frac{n}{2},\frac{n+1}{2}]}(\R^n;\C^{m\times m})$ and $m\in\N$. Here, the Laplacian
$\Delta$ acts as a diagonal operator on each of the $m$ components and 
$\zeta\in\C\sm\R_{\geq 0}$ or $\zeta=\lambda\pm i0,\lambda>0$ as we explain below.

According to the general strategy described above, to get an analogue of
estimate~\eqref{eq:resolvent-gen-perturbed} under our assumptions, we need to accomplish the following three
steps:
\begin{enumerate}
	\item[] Step 1:\; Provide $L^p-L^q$ estimates for $R_0(\zeta)$. 
	\item[] Step 2:\; Show that the linear operator $K(\zeta)=-R_0(\zeta)V \colon L^q(\R^n;\C^m)\to
	L^q(\R^n;\C^m)$ is compact.
	\item[] Step 3:\; Prove the injectivity of the Fredholm operator $I-K(\zeta)\colon L^q(\R^n;\C^m)\to
	L^q(\R^n;\C^m)$.
\end{enumerate}
We will see that Step~1 is essentially available in the literature. Only minor modifications will be
needed to pass from the scalar to the vector-valued framework.  To accomplish Step~2, which is rather
standard, we will use the local compactness of Sobolev embeddings. So the main difficulty is to
achieve Step 3. It will be accomplished with the aid of Carleman estimates by Koch and
Tataru~\cite{KochTat_Absence} and by exploiting the fact that $V$ is Hermitian. \\
Our results from Theorem~\ref{thm:main-Helmholtz-sys} even provide the
uniform bounds in $\C\sm\R_{\geq 0}$ 
$$
  C(\zeta):=\|R_0(\zeta)\|_{p\to q}  \les |\zeta|^{\frac{n}{2}(\frac{1}{p} - \frac{1}{q} -
  \frac{2}{n})} \qquad\text{and}\qquad
  \|(I-K(\zeta))^{-1}\|_{q\to q} \les 1
$$
as well as continuity properties of $\zeta\mapsto K(\zeta)$ and $\zeta\mapsto (I-K(\zeta))^{-1}$ needed 
for the proof of~\eqref{eq:continuousExtension}.  
The following subsections are devoted to the proof of the aforementioned facts.

\medskip

\subsection{$L^p-L^q$ estimates for $R_0(\zeta)$}

In the \textit{scalar} case, optimal $L^p-L^q$ resolvent estimates for $n\geq 3$ are originally due to
Kenig, Ruiz and Sogge~\cite[Theorem~2.3]{K_R_S} in the selfdual case $q=p'$ and to Guti\'{e}rrez
in~\cite[Theorem~6]{Gut_nontrivial} in the general case. 
For the precise asymptotics with respect to $\zeta$, which results from rescaling, we refer
to~\cite[p.1419]{KwonLee_Sharp}.

\begin{thm}[Kenig-Ruiz-Sogge, Guti\'{e}rrez]\label{thm:Gutierrez}
Let $m=1,n\in \N, n\geq 3 $ and assume $\zeta \in \C\setminus \R_{\geq 0}$.
	Then, for $1\leq p,q\leq \infty$ such that
	\begin{equation}\label{eq:indices-Gutierrez}
\frac{n+1}{2n}<\frac{1}{p}\leq 1,\qquad
0\leq \frac{1}{q}<\frac{n-1}{2n},\qquad
\frac{2}{n+1}\leq \frac{1}{p}-\frac{1}{q}\leq \frac{2}{n},
\end{equation}
$R_0(\zeta)$ is a bounded  linear operator from $L^p(\R^n)$ to $L^q(\R^n;\C)$ satisfying
\begin{equation}\label{eq:free-resolvent-est}
	\|R_0(\zeta)f\|_q
	\les |\zeta|^{\frac{n}{2}(\frac{1}{p} - \frac{1}{q} - \frac{2}{n})}\|f\|_p.
\end{equation}
Moreover, there are bounded linear operators $R_0(\lambda\pm i0):L^p(\R^n)\to L^q(\R^n;\C)$ such that 
$R_0(\zeta)f\to R_0(\lambda\pm i0)f$  as $\zeta\to\lambda \pm i0$  for all
$f\in L^p(\R^n)$ and
\begin{equation}\label{eq:free-resolvent-est2}
	\|R_0(\lambda \pm i 0)f\|_q
	\les \lambda^{\frac{n}{2}(\frac{1}{p} - \frac{1}{q} - \frac{2}{n})}\|f\|_p 
	\qquad (\lambda>0).
\end{equation}
\end{thm}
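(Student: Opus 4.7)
The plan is to follow the classical path of Kenig-Ruiz-Sogge and its off-diagonal extension by Guti\'{e}rrez, in three conceptually distinct steps: a rescaling that exposes the $|\zeta|$-asymptotics, a uniform resolvent estimate on the unit circle away from the spectral point $\zeta=1$, and a passage to the boundary values at $\lambda\pm i0$.

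\emph{Rescaling.} I would substitute $v(y):=u(|\zeta|^{-1/2}y)$ in $(\Delta+\zeta)u=f$, which yields $(\Delta + \zeta/|\zeta|)v = g$ with $g(y)=|\zeta|^{-1}f(|\zeta|^{-1/2}y)$ and $\zeta/|\zeta|\in S^1\sm\{1\}$. Tracking the Jacobians in the $L^p$ and $L^q$ norms converts a uniform bound $\|R_0(\tilde\zeta)\|_{p\to q}\les 1$ for $\tilde\zeta$ in compact subsets of $S^1\sm\{1\}$ directly into~\eqref{eq:free-resolvent-est} with the advertised factor $|\zeta|^{\frac{n}{2}(\frac{1}{p}-\frac{1}{q}-\frac{2}{n})}$, as explained in~\cite[p.1419]{KwonLee_Sharp}. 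This reduces the entire problem to the case $|\zeta|=1$, $\zeta\neq 1$.

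\emph{Uniform bound on $S^1\sm\{1\}$.} This is the heart of the matter and is exactly the content of~\cite{K_R_S} (selfdual case $q=p'$, $\frac{1}{p}-\frac{1}{q}=\frac{2}{n+1}$) and~\cite{Gut_nontrivial} (general case). I would realize $R_0(\zeta)$ as the Fourier multiplier $(-|\xi|^2+\zeta)^{-1}$ and split it into a piece supported in a thin shell around the singular sphere $|\xi|^2\approx\Real\zeta$ and a smooth tail. The tail is controlled by Mikhlin- and Sobolev-type bounds, while the shell piece is treated by duality and the Stein-Tomas restriction theorem. Stein's interpolation of analytic operator families between the Stein-Tomas endpoint $\frac{1}{p}-\frac{1}{q}=\frac{2}{n+1}$ and the Sobolev endpoint $\frac{1}{p}-\frac{1}{q}=\frac{2}{n}$ then fills in the full open range~\eqref{eq:indices-Gutierrez}. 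Crucially, the constants depend only on $\dist(\zeta,1)$ along the unit circle and are independent of $\Imag\zeta$, so the estimate survives the limit $\Imag\zeta\to 0^\pm$.

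\emph{Boundary values.} For~\eqref{eq:free-resolvent-est2} I would first work with $f\in C_c^\infty(\R^n)$: there the convolution kernels of $R_0(\zeta)$, expressible in terms of Hankel functions, converge pointwise and are dominated by a locally integrable envelope as $\zeta\to\lambda\pm i0$. This yields $R_0(\zeta)f\to R_0(\lambda\pm i0)f$ pointwise and in $L^q_{\loc}$. The uniform bound~\eqref{eq:free-resolvent-est} combined with a three-$\varepsilon$ density argument then extends the convergence to arbitrary $f\in L^p(\R^n)$ and gives~\eqref{eq:free-resolvent-est2} by lower semicontinuity of the norm. The genuinely hard step is the uniform Stein-Tomas-type bound of the second paragraph; once that is supplied by~\cite{K_R_S,Gut_nontrivial}, the remainder is essentially bookkeeping.
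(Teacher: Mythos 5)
Your proposal is essentially correct and follows the same references, but your treatment of the boundary limit differs from the paper's and contains a small imprecision worth flagging.

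For the uniform bound~\eqref{eq:free-resolvent-est} you and the paper both ultimately defer to Kenig--Ruiz--Sogge, Guti\'{e}rrez and Kwon--Lee; your sketch of the shell/tail split, Stein--Tomas restriction and analytic interpolation is a faithful outline of what those papers do, and the rescaling to $|\zeta|=1$ is exactly the device quoted from Kwon--Lee. The genuine divergence is in the passage to $R_0(\lambda\pm i0)$. You argue for $f\in C_c^\infty$ via pointwise convergence of the Hankel-type kernels plus domination, then extend by density. The paper instead shows $(R_0(\zeta)f)$ is a \emph{Cauchy} net in $L^q$ by using the explicit three-region bound of Huang--Yao--Zheng on $|G_\zeta(z)-G_{\tilde\zeta}(z)|$ together with Young's convolution inequality, obtaining the quantitative rate $\|R_0(\zeta)f-R_0(\tilde\zeta)f\|_q\lesssim |\mu-\tilde\mu|+|\mu-\tilde\mu|^{\frac{n-1}{2}-\frac{n}{q}}$ (with $\zeta=\mu^2$). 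This exploits $q>\frac{2n}{n-1}$ in a visible way and needs no identification of a pointwise limit; it also buys an explicit modulus of continuity that is useful elsewhere in the paper. Your dominated-convergence route can be made to work, but ``dominated by a locally integrable envelope'' is too weak: you need a dominating function that lies in $L^q(\R^n)$ globally, not merely $L^1_{\loc}$, or else you only obtain $L^q_{\loc}$ convergence and the density argument does not close because mass could escape to infinity. For $f\in C_c^\infty$ the convolutions $G_\zeta * f$ are in fact dominated by $(1+|x|)^{(1-n)/2}\in L^q$ (using the kernel decay $|G_\zeta(z)|\lesssim |z|^{(1-n)/2}$ for $|z|\gtrsim 1$, uniformly in $\zeta$ near $\lambda$), and $q>\frac{2n}{n-1}$ makes this work — so the idea is sound, but you should state the $L^q$ envelope explicitly rather than invoking local integrability.
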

\begin{proof}
  The estimate~\eqref{eq:free-resolvent-est} is available in the literature mentioned above. The existence of
  a bounded linear operator $R_0(\lambda\pm i0)$ with
  $R_0(\zeta)f\wto R_0(\lambda\pm i0)f$ as $\zeta\to\lambda\pm i0$  follows from
  the uniform boundedness of the functions $R_0(\zeta)f$ in $L^q(\R^n)$ for $\zeta$
  near $\lambda$ (see \eqref{eq:free-resolvent-est}) and 
  the continuity of Cauchy type integrals as in~\cite[Theorem~4.1]{Agmon_Spectral}. We
  indicate how to prove that this convergence in fact holds in the strong sense.
  By density of test functions and~\eqref{eq:free-resolvent-est} it suffices to
  prove $R_0(\zeta)f-R_0(\tilde\zeta)f\to 0$ for test functions $f\in C_c^\infty(\R^n)$ 
  as $\zeta,\tilde\zeta\to \zeta_0 \in\C\sm\{0\}, \Imag(\zeta)\Imag(\tilde\zeta)>0$.  
  For simplicity we only consider the case $\Imag(\zeta),\Imag(\tilde\zeta)>0.$
  Here we can use $R_0(\zeta)f-R_0(\tilde\zeta)f = (G_\zeta-G_{\tilde\zeta}) \ast f$ where, according to
  \cite[p.46]{HuangYaoZheng}, we have for $\zeta=\mu^2\neq 0,\Real(\mu),\Imag(\mu)>0$ and 
  $\tilde\zeta=\tilde\mu^2$ sufficiently close to $\zeta$ with $\Real(\tilde\mu),\Imag(\tilde\mu)>0$,
  \begin{align*}
    |G_\zeta(z)-G_{\tilde\zeta}(z)|
    \les  \begin{cases}
      |\mu-\tilde\mu| |z|^{3-n} &,\text{if }|z|\leq |\mu|^{-1}\\
      |\mu-\tilde\mu| |\mu|^{\frac{n-3}{2}} |z|^{\frac{3-n}{2}} &,\text{if }
      |\mu|^{-1}\leq |z|\leq |\mu-\tilde\mu|^{-1}\\
      |\mu|^{\frac{n-3}{2}}  |z|^{\frac{1-n}{2}} &,\text{if }|z|\geq |\mu-\tilde\mu|^{-1}. 
    \end{cases}
  \end{align*}
  So Young's convolution inequality implies in view of $q>\frac{2n}{n-1}$
  \begin{align*}
    \|R_0(\zeta)f-R_0(\tilde\zeta)f\|_q 
    &\les   |\mu-\tilde\mu|\, \||z|^{3-n} \chi_{|z|\leq |\mu|^{-1}}\|_1 \|f\|_q \\
    & + |\mu-\tilde\mu|\, \||z|^{\frac{3-n}{2}}
     \chi_{|\mu|^{-1}\leq |z|\leq |\mu-\tilde\mu|^{-1}}    \|_q \|f\|_1 \\
    &+ \| |z|^{\frac{1-n}{2}} \chi_{|z|\geq |\mu-\tilde\mu|^{-1}}\|_q \|f\|_1 \\
    &\les   |\mu-\tilde\mu|  
     + |\mu-\tilde\mu|  \cdot |\mu-\tilde\mu|^{\frac{n-3}{2}-\frac{n}{q}}
     +  |\mu-\tilde\mu|^{\frac{n-1}{2}-\frac{n}{q}} \\ 
    &=   |\mu-\tilde\mu| +  |\mu-\tilde\mu|^{\frac{n-1}{2}-\frac{n}{q}}.
  \end{align*}
  Hence, $(R_0(\zeta)f)$ is a Cauchy sequence in $L^q$ and thus converges. Since the limit must coincide
  with the weak limit, we get the conclusion.
\end{proof}

The conditions~\eqref{eq:indices-Gutierrez} on $(p,q)$ are optimal for the uniform
estimates~\eqref{eq:free-resolvent-est2}, \textit{cf}.~\cite[p.1419]{KwonLee_Sharp}.  For any fixed
$\zeta\in \C \setminus \R_{\geq 0}$, however, the estimate~\eqref{eq:free-resolvent-est} actually holds for a
larger range of exponents, which is due to the improved properties of the Fourier symbol $1/(|\xi|^2-\zeta)$ 
and related Bessel potential estimates. We refer to~\cite{KwonLee_Sharp} for more details about sharp
$L^p-L^q$ resolvent estimates of the form~\eqref{eq:free-resolvent-est}. \\
Theorem~\ref{thm:Gutierrez} extends in an obvious way to the system case that we shall need in the following.  

\begin{cor}[Step 1]\label{cor:res-free-matrix}
	Let $m,n\in \N, n\geq 3$ and assume $\zeta \in \C\setminus \R_{\geq 0}$.
	Then, for $1\leq p,q\leq \infty$ as in~\eqref{eq:indices-Gutierrez}, $R_0(\zeta)$ is a bounded  linear
	operator from $L^p(\R^n;\C^m)$ to $L^q(\R^n;\C^m)$ satisfying
  $$
   \|R_0(\zeta)  f\|_q
     \les |\zeta|^{\frac{n}{2}(\frac{1}{p} - \frac{1}{q} - \frac{2}{n})} \|f\|_p.
  $$
 Moreover, there are bounded linear operators $R_0(\lambda\pm i0):L^p(\R^n;\C^m)\to L^q(\R^n;\C^m)$ such
 that $R_0(\zeta)f\to R_0(\lambda\pm i0)f$ as $\zeta\to\lambda\pm i0$  for all
 $f\in L^p(\R^n;\C^m)$. Furthermore, \eqref{eq:free-resolvent-est2} holds.
\end{cor}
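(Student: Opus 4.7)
The plan is to reduce the vector-valued statement to the scalar version of Theorem~\ref{thm:Gutierrez} by exploiting that $R_0(\zeta) = (\Delta I_m + \zeta I_m)^{-1}$ acts diagonally. Concretely, for $f = (f_1,\dots,f_m)\in L^p(\R^n;\C^m)$ the definition gives $(R_0(\zeta) f)_j = R_0(\zeta) f_j$ where on the right-hand side $R_0(\zeta)$ denotes the scalar resolvent from Theorem~\ref{thm:Gutierrez}. So the problem decouples across components.

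First I would invoke the scalar bound from Theorem~\ref{thm:Gutierrez} componentwise, obtaining
$$
  \|(R_0(\zeta)f)_j\|_{L^q(\R^n)} \les |\zeta|^{\frac{n}{2}(\frac{1}{p}-\frac{1}{q}-\frac{2}{n})} \|f_j\|_{L^p(\R^n)}
  \qquad (j=1,\dots,m).
$$
Then, using the equivalence of any two norms on $\C^m$, the vector-valued $L^p$-norm satisfies
$\|f\|_{L^p(\R^n;\C^m)} \sim \sum_{j=1}^m \|f_j\|_{L^p(\R^n)}$, and analogously for $L^q$. Summing the $m$ scalar estimates and absorbing $m$ into the implicit constant yields the claimed inequality
$$
  \|R_0(\zeta)f\|_q \les |\zeta|^{\frac{n}{2}(\frac{1}{p}-\frac{1}{q}-\frac{2}{n})}\|f\|_p.
$$

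For the boundary values, I would similarly define $R_0(\lambda\pm i0)f \in L^q(\R^n;\C^m)$ componentwise by $(R_0(\lambda\pm i0)f)_j := R_0(\lambda\pm i0) f_j$, using the scalar limiting operators already supplied by Theorem~\ref{thm:Gutierrez}. The scalar strong convergence $R_0(\zeta)f_j \to R_0(\lambda\pm i0)f_j$ in $L^q(\R^n)$ as $\zeta\to\lambda\pm i0$ transfers componentwise to the vector case, and the uniform estimate~\eqref{eq:free-resolvent-est2} is inherited in the same way by summing in $j$. The estimate $\|R_0(\lambda\pm i0)f\|_q\les \lambda^{\frac{n}{2}(\frac{1}{p}-\frac{1}{q}-\frac{2}{n})}\|f\|_p$ then follows immediately.

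There is no essential obstacle here: the statement is a purely formal extension, made possible by the fact that $\Delta$ acts diagonally on $\C^m$-valued functions and the coefficient $\zeta I_m$ does not mix components. The entire argument is bookkeeping; the only thing to remark is that the constant in $\les$ now also depends on $m$, but $m$ is fixed throughout so this is harmless.
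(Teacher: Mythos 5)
Your argument is correct and is exactly the route the paper has in mind: the text states that Theorem~\ref{thm:Gutierrez} ``extends in an obvious way to the system case'' and records Corollary~\ref{cor:res-free-matrix} without proof, the point being precisely that $\Delta I_m + \zeta I_m$ is diagonal so the scalar estimate and the scalar strong convergence apply componentwise. You have simply written out the bookkeeping (equivalence of norms on $\C^m$, summing over $j$, componentwise definition of $R_0(\lambda\pm i0)$) that the authors leave implicit.
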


\medskip

\subsection{Compactness of $K(\zeta)$}

We first proceed in greater generality by proving the boundedness and compactness of $K(\zeta)$ as an operator
from $L^{q_1}(\R^n;\C^m)$ to $L^{q_2}(\R^n;\C^m)$ for suitable $q_1,q_2$, as we will use this more general
result later. The proof of Step 2 then follows from the particular choice $q_1=q_2=q$, see
Corollary~\ref{cor:step2} below. In order to simplify the notation  in the proofs, 
we will write $L^s:=L^s(\R^n;\C^m),L^s_{\loc}:=L^s_{\loc}(\R^n;\C^m)$, etc.

\begin{prop}\label{prop:step2-gen}
	Let $n, m\in \N, n\geq 3$ and suppose that $V\in L^{[\kappa,\tilde\kappa]}(\R^n;\C^{m\times m})$
	where $1\leq \kappa\leq \tilde\kappa<\infty$.
	Then, for $\zeta \in \C\setminus \R_{\geq 0}$ or $\zeta=\lambda \pm i 0, \lambda>0$, the operator
	$K(\zeta)=-R_0(\zeta)V\colon L^{q_1}(\R^n;\C^m) \to L^{q_2}(\R^n;\C^m)$ is compact  provided that the
	following conditions hold for $q_1,q_2\in [1,\infty]$:
	\begin{equation}\label{eq:ass-compactness}
		\frac{n+1}{2n} - \frac{1}{\tilde\kappa}<\frac{1}{q_1}\leq 1-\frac{1}{\kappa},\qquad
		0\leq \frac{1}{q_2}<\frac{n-1}{2n},\qquad
		\frac{2}{n+1}-\frac{1}{\tilde\kappa}\leq \frac{1}{q_1} -\frac{1}{q_2}\leq \frac{2}{n}-\frac{1}{\kappa}.
	\end{equation} 
	Moreover, 
	\begin{equation}\label{eq:normestimate_K}
	  \|K(\zeta)\|_{q_1\to q_2}
	  \les \inf_{V=V_1+V_2} 
	  \left[ |\zeta|^{\frac{n}{2}(\frac{1}{q_1}-\frac{1}{q_2}+\frac{1}{\kappa}-\frac{2}{n})} 
	   \|V_1\|_\kappa + |\zeta|^{\frac{n}{2}(\frac{1}{q_1}-\frac{1}{q_2}+\frac{1}{\tilde\kappa}-\frac{2}{n})}
	   \|V_2\|_{\tilde\kappa}\right].
	\end{equation}
	Furthermore, $u_j\wto u,\zeta_j\to \zeta$ implies $K(\zeta_j)u_j\to K(\zeta)u$.  
	In particular, the operators $K(\zeta)$ depend continuously on $\zeta\in\C\sm\R_{\geq 0}$ in the uniform
	operator topology and we have $K(\zeta)\to K(\lambda\pm i0)$ as $\zeta\to\lambda\pm i0$.
\end{prop}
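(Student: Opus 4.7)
For any decomposition $V=V_1+V_2$ with $V_i\in L^{\kappa_i}$ (writing $\kappa_1:=\kappa,\kappa_2:=\tilde\kappa$), H\"older's inequality shows that multiplication by $V_i$ maps $L^{q_1}$ into $L^{s_i}$ with $\tfrac{1}{s_i}:=\tfrac{1}{q_1}+\tfrac{1}{\kappa_i}$. The hypotheses~\eqref{eq:ass-compactness} on $(q_1,q_2)$ are engineered precisely so that each pair $(s_i,q_2)$ lies in the Guti\'errez admissible range~\eqref{eq:indices-Gutierrez}. Applying Corollary~\ref{cor:res-free-matrix} to $R_0(\zeta)\colon L^{s_i}\to L^{q_2}$ with the $|\zeta|$-dependent bound, summing the two contributions, and infimizing over admissible decompositions then yields~\eqref{eq:normestimate_K}; the limiting values $\zeta=\lambda\pm i0$ are handled via~\eqref{eq:free-resolvent-est2}.

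\textbf{Compactness via approximation.} To establish compactness of $K(\zeta)$, I would approximate each $V_i$ in the $L^{\kappa_i}$-norm by smooth compactly supported $V_i^\eta\in C_c^\infty(\R^n;\C^{m\times m})$ and set $K^\eta(\zeta):=-R_0(\zeta)(V_1^\eta+V_2^\eta)$. Applying~\eqref{eq:normestimate_K} to $V-V^\eta$ gives $\|K(\zeta)-K^\eta(\zeta)\|_{q_1\to q_2}\to 0$ as $\eta\to 0$, uniformly for $\zeta$ in compact subsets of $\C\sm\R_{\geq 0}$ and uniformly near $\lambda\pm i0$. Since compact operators are norm-closed in $\mathcal B(L^{q_1},L^{q_2})$, it suffices to prove compactness of each $K^\eta(\zeta)$. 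Given a bounded sequence $(u_j)\subset L^{q_1}$ with $u_j\wto u$, the products $V^\eta u_j$ are supported in a fixed ball $B_R$ and uniformly bounded in $L^{q_1}$. Interior Calder\'on--Zygmund regularity applied to $(-\Delta-\zeta)w_j=V^\eta u_j$ with $w_j:=R_0(\zeta)V^\eta u_j$ upgrades the global $L^{q_2}$-bound to uniform local $W^{2,q_1}$-bounds, and Rellich's theorem then yields strong subsequential convergence of $w_j$ in $L^{q_2}_\loc$. For $\zeta\in\C\sm\R_{\geq 0}$ the uniform tail decay $\|w_j\|_{L^{q_2}(\{|x|>R'\})}\to 0$ as $R'\to\infty$ follows from the exponential decay of the kernel $G_\zeta$ away from its singularity, since $V^\eta u_j$ is uniformly compactly supported; combined with the local compactness this proves compactness of $K^\eta(\zeta)$ in the dissipative regime.

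\textbf{Boundary values, joint continuity, and main obstacle.} For $\zeta=\lambda\pm i0$, where $G_{\lambda\pm i0}$ only oscillates at infinity and no pointwise kernel decay is available, I would instead establish the operator-norm convergence $K^\eta(\zeta)\to K^\eta(\lambda\pm i0)$ as $\zeta\to\lambda\pm i0$ via Young's inequality applied to the explicit kernel differences $G_\zeta-G_{\lambda\pm i0}$, exploiting crucially that $V^\eta u$ is supported in a fixed ball (the required pointwise kernel-difference bounds are precisely the ones used in the proof of Theorem~\ref{thm:Gutierrez}). Norm closedness of the compact operators then transfers compactness from the non-real regime to $K^\eta(\lambda\pm i0)$, and from there to $K(\lambda\pm i0)$ through the approximation step. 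Finally, the decomposition
$$
  K(\zeta_j)u_j-K(\zeta)u=\bigl(K(\zeta_j)-K(\zeta)\bigr)u_j+K(\zeta)(u_j-u)
$$
yields the joint continuity statement: the first term is bounded by the operator-norm difference just established times $\sup_j\|u_j\|_{q_1}$, and the second vanishes by compactness of $K(\zeta)$ applied to the weakly null sequence $u_j-u$; the ``in particular'' clause on uniform operator-topology continuity is an immediate consequence. The principal obstacle is exactly this boundary passage: without kernel decay, direct compactness arguments via tail control break down, and one must import compactness from the dissipative regime through quantitative kernel-difference estimates combined with the norm closedness of the compact operators.
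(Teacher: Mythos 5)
Your norm estimate and the local‐compactness step (Calder\'on--Zygmund interior regularity plus Rellich) match the paper's argument. The paper also first reduces to bounded, compactly supported $V$ (citing \cite[Lemma~3.1]{GolSch_LAP}), which is effectively your density-plus-norm-closedness reduction. So the overall architecture is the same up to the tail control.

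Where you diverge is in handling the tail for the boundary values $\zeta=\lambda\pm i0$, and the ``obstacle'' you identify there is not actually present. You assert that ``no pointwise kernel decay is available'' for $G_{\lambda\pm i0}$, and you therefore route the boundary compactness through operator-norm convergence $K^\eta(\zeta)\to K^\eta(\lambda\pm i0)$ and norm-closedness of the compacts. That detour is logically sound (the kernel-difference estimates from the proof of Theorem~\ref{thm:Gutierrez} do give operator-norm Cauchyness on the class of $f$ supported in a fixed ball, hence a valid $3\varepsilon$ argument), but it is unnecessary. The paper instead invokes the \emph{uniform} polynomial bound $\sup_\zeta|G_\zeta(z)|\lesssim |z|^{(1-n)/2}$ for $|z|\ge 1$ (see (2.21),(2.25) of \cite{K_R_S}), which holds for all $\zeta$ including the boundary values. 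Since the condition $\frac{1}{q_2}<\frac{n-1}{2n}$ from~\eqref{eq:ass-compactness} forces $q_2>\frac{2n}{n-1}$, the function $|z|^{q_2(1-n)/2}$ is integrable at infinity, so $\sup_j\|\chi_{\R^n\sm B}K(\zeta_j)\|_{q_1\to q_2}\to 0$ as $B\nearrow\R^n$ directly, uniformly in $\zeta_j$, without distinguishing the dissipative regime from $\lambda\pm i0$. This is both simpler and delivers the paper's stronger joint statement ($u_j\wto u$, $\zeta_j\to\zeta$ implies $K(\zeta_j)u_j\to K(\zeta)u$) in one pass, from which compactness and the continuity in operator norm fall out as corollaries, rather than being assembled afterwards as you do. Your proof is correct, but you should note that the polynomial decay of $G_{\lambda\pm i0}$ at infinity makes the boundary passage a non-issue, and then your exponential-decay argument and the kernel-difference workaround can be replaced by the single uniform bound.
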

\begin{proof}
	We begin with proving  \textit{boundedness} of $K(\zeta)\colon L^{q_1}  \to L^{q_2}$. 
	Using the decomposition $V=V_1+V_2,$ $V_1\in L^{\kappa}(\R^n;\C^{m\times m})$ and $V_2\in
	L^{\tilde\kappa}(\R^n;\C^{m\times m}),$ it follows from Corollary~\ref{cor:res-free-matrix} that
	\begin{equation*}
	  \|K(\zeta) f\|_{q_2}
	   \leq \|R_0(\zeta)\|_{p\to q_2} \|V_1 f\|_{p} + 
	   \|R_0(\zeta)\|_{\tilde p\to q_2}\|V_2 f\|_{\tilde p}
	\end{equation*}
	whenever the tuples $(p,q_2)$ and $(\widetilde{p}, q_2)$ satisfy the conditions
	in~\eqref{eq:indices-Gutierrez}. In view of~\eqref{eq:ass-compactness} and $\kappa\leq \tilde\kappa$ these
	conditions are satisfied if we choose $p,\tilde p$ according to $\frac{1}{p}=\frac{1}{\kappa} +
	\frac{1}{q_1}$, $\frac{1}{\tilde p}=\frac{1}{\tilde\kappa} + \frac{1}{q_1}$. So  
	H\"older's inequality and Corollary~\ref{cor:res-free-matrix} give
	\begin{equation*}
	  \|K(\zeta) f\|_{q_2}
	   \les \left(|\zeta|^{\frac{n}{2}(\frac{1}{q_1}-\frac{1}{q_2}+\frac{1}{\kappa}-\frac{2}{n})} 
	   \|V_1\|_\kappa + |\zeta|^{\frac{n}{2}(\frac{1}{q_1}-\frac{1}{q_2}+\frac{1}{\tilde\kappa}-\frac{2}{n})}
	   \|V_2\|_{\tilde\kappa}\right) \|f\|_{q_1},
	\end{equation*}
	which proves the claimed boundedness as well as~\eqref{eq:normestimate_K}.
	
	\medskip  
		
	Next we show that $u_j\wto u,\zeta_j\to \zeta$ implies $K(\zeta_j)u_j\to K(\zeta)u$.
	Here, $\zeta\in\C\sm\R_{\geq 0}$ or $\zeta=\lambda\pm i0,\lambda>0$. Notice that 
	this fact and Corollary~\ref{cor:res-free-matrix} imply the compactness of $K(\zeta)$ (choose
	$\zeta_j:=\zeta$) as well as the  existence of a continuous extension of $\zeta\mapsto K(\zeta)$ in
	$\mathcal B(L^{q_1};L^{q_2})$ to the closed upper resp. lower complex half-planes.

As shown in~\cite[Lemma~3.1]{GolSch_LAP}, without loss of generality, we can assume $V$ to be  
	bounded and to have compact support. 
	
	\medskip
	
	We first prove $K(\zeta_j)u_j\to K(\zeta)u$ in $L^{q_2}_{\loc}$. To this end, it suffices to prove
	the uniform boundedness of the operators $K(\zeta_j)\colon L^{q_1}\to W^{2,q_1}(B;\C^m)$ with respect to
	$j$ for any given bounded ball $B\subset\R^n$. 
	Indeed, the embedding $W^{2,q_1}(B;\C^m)\hookrightarrow L^{q_2}(B;\C^m)$ is compact due to
	$\frac{1}{q_1}-\frac{1}{q_2}\leq \frac{2}{n}-\frac{1}{\kappa}<\frac{2}{n}$ and the Rellich-Kondrachov
	Theorem, which implies $K(\zeta_j)u_j\to v$ in $L^{q_2}(B;\C^m)$ for some $v$. This and $u_j\wto u$
	implies $v=K(\zeta)u.$ 
So we conclude $K(\zeta_j)u_j\to K(\zeta)u$ in $L^{q_2}_{\loc}$ once
	we have proved the uniform boundedness of $K(\zeta_j)\colon L^{q_1}\to W^{2,q_1}(B;\C^m)$.  
	
	\medskip
	
	To prove this let $f\in	L^{q_1}$ be arbitrary. Then $w_j:=K(\zeta_j)f$ satisfies the elliptic system
	\begin{equation*}
		\Delta w_j= (\Delta + \zeta_j)w_j - \zeta_j w_j= -Vf -\zeta_j w_j \quad\text{in }2B. 		 
	\end{equation*}
	From elliptic interior regularity estimates and  the mapping properties of $K(\zeta_j)$ stated in
	Corollary~\ref{cor:res-free-matrix} and H\"older's inequality we obtain $ \|w_j\|_{W^{2,q_1}(B;\C^m)}\les  \|f\|_{q_1},$
	which is what we had to prove. 
	\medskip
	
	To conclude it is sufficient to show $\sup_j \|\chi_{\R^n\setminus B} K(\zeta_j)\|_{q_1\to q_2}
	\to 0$ as $B\nearrow \R^n.$ To  this end we use  
	\begin{equation*}
		K(\zeta_j) f(x)= -\int_{\R^n} G_{\zeta_j}(x-y) V(y)f(y)\, \mathrm{d}y,
	\end{equation*}
   where $G_{\zeta_j}(z)$ is the integral kernel of the resolvent operator $R_0(\zeta_j)$, which is explicitly given
   in terms of Bessel functions. We use the bound $\sup_j
   |G_{\zeta_j}(z)|\les |z|^\frac{1-n}{2}$ for $|z|\geq 1$, see  (2.21),(2.25) in~\cite{K_R_S}.
    Recalling that $V$ is assumed to be bounded and compactly supported  we infer for $M:=\supp V$ 
\begin{equation*} 
	|K(\zeta_j)f(x)|\les \int_M |x-y|^\frac{1-n}{2}|V(y)| |f(y)|\, \mathrm{d}y
	\les |x|^\frac{1-n}{2} \|V\|_{q_1'} \|f\|_{q_1}\quad\text{if }\dist(x,M)\geq 1.
\end{equation*}
  This yields for large enough balls $B$
\begin{equation*}
	\|\chi_{\R^n\setminus B} K(\zeta_j) f\|_{q_2}
	\les \|V\|_{q_1'} \|f\|_{q_1} \Big(\int_{\R^n \setminus B} |x|^{\frac{q_2(1-n)}{2}}\,
	\mathrm{d}x\Big)^\frac{1}{q_2}
\end{equation*}
and the conclusion follows due to $q_2>\frac{2n}{n-1}$.   
\end{proof}

The second step now results from considering the special case $q_1=q_2=q$
in Proposition~\ref{prop:step2-gen}.

\begin{cor}[Step 2]\label{cor:step2}
	Let $n, m\in \N,$ $n\geq 3$ and assume that $V\in L^{[\kappa,\tilde\kappa]}(\R^n;\C^{m\times m})$
	where $\frac{n}{2}\leq \kappa\leq \tilde\kappa\leq \frac{n+1}{2}$.
	Then, for $\zeta \in \C\setminus \R_{\geq 0}$
	or $\zeta=\lambda\pm i0, \lambda>0$, the operator $K(\zeta)=-R_0(\zeta) V\colon L^q(\R^n;\C^m)\to
	 L^q(\R^n;\C^m)$ is compact provided that
	\begin{equation}\label{eq:simplified-cond}
		\frac{n+1}{2n}-\frac{1}{\tilde\kappa}<\frac{1}{q}<\frac{n-1}{2n}.
	\end{equation}
	Furthermore, $u_j\wto u,\zeta_j\to \zeta$ implies $K(\zeta_j)u_j\to K(\zeta)u$.  
	In particular, the operators $K(\zeta)$ depend continuously on $\zeta\in\C\sm\R_{\geq 0}$ in the uniform
	operator topology and we have $K(\zeta)\to K(\lambda\pm i0)$ as $\zeta\to\lambda\pm i0$.
\end{cor}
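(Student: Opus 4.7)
The plan is to invoke Proposition~\ref{prop:step2-gen} directly with $q_1=q_2=q$ and verify that the hypotheses on $(\kappa,\tilde\kappa)$ together with the single condition~\eqref{eq:simplified-cond} imply all three lines of~\eqref{eq:ass-compactness}. Once this is done, compactness of $K(\zeta)\colon L^q\to L^q$, the weak-to-strong sequential continuity and the continuous dependence on $\zeta$ follow immediately from the same proposition.

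Setting $q_1=q_2=q$ reduces~\eqref{eq:ass-compactness} to the three requirements
\[
  \frac{n+1}{2n}-\frac{1}{\tilde\kappa}<\frac{1}{q}\leq 1-\frac{1}{\kappa},\qquad
  0\leq \frac{1}{q}<\frac{n-1}{2n},\qquad
  \frac{2}{n+1}-\frac{1}{\tilde\kappa}\leq 0\leq \frac{2}{n}-\frac{1}{\kappa}.
\]
The third line is equivalent to $\tilde\kappa\leq \frac{n+1}{2}$ and $\kappa\geq\frac{n}{2}$, which holds by assumption. The lower bound on $\frac{1}{q}$ in the first line coincides with the lower bound in~\eqref{eq:simplified-cond}, and the middle condition $\frac{1}{q}<\frac{n-1}{2n}$ is exactly the upper bound in~\eqref{eq:simplified-cond} (the trivial inequality $\frac{1}{q}\geq 0$ of course also holds).

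The only point that still needs a short argument is the compatibility of $\frac{1}{q}<\frac{n-1}{2n}$ with the upper bound $\frac{1}{q}\leq 1-\frac{1}{\kappa}$ in the first line. Since $\kappa\geq \frac{n}{2}$ we have $1-\frac{1}{\kappa}\geq 1-\frac{2}{n}=\frac{n-2}{n}$, and a short computation shows that $\frac{n-1}{2n}\leq \frac{n-2}{n}$ whenever $n\geq 3$. Thus the stronger bound $\frac{1}{q}<\frac{n-1}{2n}$ automatically implies $\frac{1}{q}\leq 1-\frac{1}{\kappa}$ in our dimension range.

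All hypotheses of Proposition~\ref{prop:step2-gen} are therefore satisfied. This yields the compactness of $K(\zeta)\colon L^q(\R^n;\C^m)\to L^q(\R^n;\C^m)$ for every $\zeta\in\C\setminus\R_{\geq 0}$ and for the boundary values $\zeta=\lambda\pm i0$ with $\lambda>0$, together with the weak-to-strong sequential continuity $K(\zeta_j)u_j\to K(\zeta)u$ whenever $u_j\rightharpoonup u$ and $\zeta_j\to\zeta$; the stated uniform-operator continuity of $\zeta\mapsto K(\zeta)$ and the convergence $K(\zeta)\to K(\lambda\pm i0)$ then follow at once. No step is expected to pose any substantial difficulty; the whole content of the corollary is the arithmetic check above, where the only subtle point is the implicit use of $n\geq 3$ to reconcile $\frac{n-1}{2n}$ with $1-\frac{1}{\kappa}$.
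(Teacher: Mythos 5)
Your proof is correct and follows exactly the paper's intended route: the paper states the corollary as "the special case $q_1=q_2=q$ in Proposition~\ref{prop:step2-gen}," and your argument simply supplies the arithmetic verification that~\eqref{eq:ass-compactness} collapses to~\eqref{eq:simplified-cond} under the hypotheses $\frac{n}{2}\leq\kappa\leq\tilde\kappa\leq\frac{n+1}{2}$ and $n\geq 3$, including the mild subtlety that $\frac{n-1}{2n}\leq 1-\frac{1}{\kappa}$ so the upper bound in the first condition is implied.
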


\medskip
\subsection{Injectivity of   $I-K(\zeta)$}

We now prove the injectivity of the Fredholm operator $I-K(\zeta)\colon
L^q(\R^n;\C^m)\to L^q(\R^n;\C^m)$ for $q$ as in~\eqref{eq:simplified-cond}. So we have to show that 
\begin{equation}\label{eq:ev-eq} 
	u-K(\zeta)u=0, \qquad u\in L^q(\R^n;\C^m)
\end{equation}
implies $u=0$. As a starting point, using a   bootstrapping procedure, we show that solutions
of~\eqref{eq:ev-eq} display both more local integrability and better decay at infinity.  

\begin{prop}\label{prop:regularity}
Let $n,m\in \N,$ $n\geq 3$,  $q$ as in~\eqref{eq:simplified-cond} and assume $V\in
L^{[\kappa,\tilde\kappa]}(\R^n;\C^{m\times m})$ where  $\frac{n}{2}\leq \kappa\leq \tilde\kappa\leq \frac{n+1}{2}.$
Then any solution $u\in L^q(\R^n;\C^m)$ of~\eqref{eq:ev-eq} belongs to  
\begin{align*}
  L^r(\R^n;\C^m)&\cap H^1_{\loc}(\R^n;\C^m)
  \text{ for all }r\in \Big(\frac{2n}{n-1},\frac{2n}{n-3}\Big)
  \,\quad\text{when } \kappa=\frac{n}{2},  \\
  L^r(\R^n;\C^m)&\cap H^1_{\loc}(\R^n;\C^m) 
  \text{ for all } r\in \Big(\frac{2n}{n-1},\infty\Big] 
  \quad\qquad\text{when } \kappa>\frac{n}{2}.
\end{align*}
  Moreover, for any given such $r,q$ we have $\|u\|_r\les  \|u\|_q$. 
\end{prop}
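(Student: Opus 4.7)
The strategy is a bootstrap on the identity $u = -R_0(\zeta)Vu$. Writing $V = V_1 + V_2$ with $V_j \in L^{\kappa_j}$ ($\kappa_1 := \kappa$, $\kappa_2 := \tilde\kappa$) and $u \in L^{q_k}$, Hölder's inequality puts $V_j u$ into $L^{p_k^j}$ with $\frac{1}{p_k^j} = \frac{1}{q_k} + \frac{1}{\kappa_j}$, and Corollary~\ref{cor:res-free-matrix} then places $R_0(\zeta)(V_j u) \in L^{q_{k+1}}$ for any $q_{k+1}$ with $(p_k^j, q_{k+1})$ in the Gutiérrez range~\eqref{eq:indices-Gutierrez}. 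Hence $u \in L^{q_{k+1}}$ with $\|u\|_{q_{k+1}} \lesssim \|u\|_{q_k}$ as long as $q_{k+1}$ is simultaneously admissible for $j = 1, 2$; the condition $\frac{n}{2} \leq \kappa \leq \tilde\kappa \leq \frac{n+1}{2}$ rephrases as $\frac{1}{\kappa} - \frac{1}{\tilde\kappa} \leq \frac{2}{n(n+1)}$, which is precisely what guarantees that the two Gutiérrez intervals overlap.

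The maximum per-step gain is $\frac{2}{n} - \frac{1}{\kappa_j}$, strictly positive iff $\kappa_j > n/2$. When $\kappa > n/2$, both pieces contribute strict gain, and finitely many iterations reach every $r \in (\frac{2n}{n-1}, \infty)$; the endpoint $r = \infty$ is obtained by a final local regularity step. When $\kappa = n/2$, the $V_1$-piece is inert and the admissibility constraint $\frac{1}{p_k^1} = \frac{1}{q_k} + \frac{2}{n} > \frac{n+1}{2n}$ produces the cap $q_k < \frac{2n}{n-3}$; the $V_2$-piece still gives $\frac{2}{n} - \frac{1}{\tilde\kappa} \geq \frac{2}{n(n+1)} > 0$ per step (using $\tilde\kappa \leq \frac{n+1}{2}$), so finitely many iterations reach every $r \in (\frac{2n}{n-1}, \frac{2n}{n-3})$. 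Local $H^1$-regularity then follows from standard interior elliptic regularity applied to $(\Delta + \zeta)u = -Vu$: Hölder places $Vu \in L^s_{\loc}$ for suitable $s$ once the integrability is sufficiently improved, whence $u \in W^{2,s}_{\loc} \hookrightarrow H^1_{\loc}$. The multiplicative bound $\|u\|_r \lesssim \|u\|_q$ is obtained by composing the finitely many estimates along the bootstrap chain.

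The principal obstacle is the fully degenerate sub-case $\kappa = \tilde\kappa = n/2$, in which neither piece contributes strict gain under the basic bootstrap. This is overcome by exploiting the freedom in the decomposition: a height truncation $V = V_1^M + V_2^M$ yields $V_1^M$ of arbitrarily small $L^\kappa$-norm (for $M$ large) and a bounded bulk $V_2^M \in L^{\kappa'}$ for every $\kappa' \in [\kappa, \infty]$. The small rough part is absorbed via Neumann-series inversion of $I + R_0(\zeta)V_1^M$ on $L^r$---whose operator norm is controlled by Corollary~\ref{cor:res-free-matrix} at the Gutiérrez endpoint $\frac{1}{p} - \frac{1}{r} = \frac{2}{n}$---while the bulk $V_2^M$, with effective exponent $\kappa' > n/2$, drives the bootstrap with a strictly positive per-step gain.
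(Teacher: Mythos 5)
Your bootstrap analysis is correct when $\kappa > \frac{n}{2}$, but the argument breaks down at the endpoint $\kappa = \frac{n}{2}$ even when $\tilde\kappa > \frac{n}{2}$, and the assertion that ``the $V_2$-piece still gives [a positive] per-step gain'' there does not survive scrutiny. Both summands in $u = -R_0(\zeta)(V_1 u) - R_0(\zeta)(V_2 u)$ must land in the \emph{same} target space $L^{q_{k+1}}$. With $\kappa_1 = \frac{n}{2}$ one has $\frac{1}{p_k^1} = \frac{1}{q_k} + \frac{2}{n}$, and the Guti\'errez constraint $\frac{1}{p_k^1} - \frac{1}{q_{k+1}} \leq \frac{2}{n}$ (which you drop in this case, keeping only $\frac{1}{p_k^1} > \frac{n+1}{2n}$) reduces to $\frac{1}{q_{k+1}} \geq \frac{1}{q_k}$. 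Hence the $V_1$-piece caps $q_{k+1}$ at $q_k$, and no slack in the $V_2$-constraint can override this: the per-step gain is the \emph{minimum} over $j$, which is zero whenever $\kappa = \frac{n}{2}$. (Also, the chain $\frac{2}{n} - \frac{1}{\tilde\kappa} \geq \frac{2}{n(n+1)}$ under $\tilde\kappa \leq \frac{n+1}{2}$ has the inequality in the wrong direction; one only gets $\frac{2}{n}-\frac{1}{\tilde\kappa} \leq \frac{2}{n(n+1)}$, and it is $>0$ precisely when $\tilde\kappa > \frac{n}{2}$.)

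The small-norm decomposition and Neumann inversion that you invoke only for the ``fully degenerate'' sub-case $\kappa = \tilde\kappa = \frac{n}{2}$ is in fact the mechanism the paper uses for \emph{every} $\tilde\kappa \in [\frac{n}{2},\frac{n+1}{2}]$ once $\kappa = \frac{n}{2}$: write $V = A + B$ with $\|A\|_{n/2}$ small and $B \in L^{\kappa'}$ for some $\kappa' > \frac{n}{2}$, invert $T := I + R_0(\zeta)A$ on $L^{q_{k+1}}$ by a Neumann series, and run the bootstrap on $Tu = -R_0(\zeta)(Bu)$, for which only the $B$-constraint matters. You should promote that device from a patch for one corner case to the engine of the entire $\kappa = \frac{n}{2}$ step. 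A symmetric remark applies to the decay-at-infinity direction (reaching $r$ with $\frac{2n}{n-1} < r < q$, which your write-up does not actually address): there the binding constraint comes from the $\tilde\kappa$-piece via $\frac{1}{p_k^2}-\frac{1}{q_{k+1}}\geq\frac{2}{n+1}$, the iteration stalls when $\tilde\kappa = \frac{n+1}{2}$, and the paper handles it by placing the small norm in the $L^{(n+1)/2}$-part instead.
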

\begin{proof}
  We write again $L^s:=L^s(\R^n;\C^m)$. As a starting point we
  show that any given solution $u\in L^q$ of~\eqref{eq:ev-eq} belongs to $L^r$ for all $r\in (\frac{2n}{n-1},q),$ in other words $u$ displays a 
  \textit{better decay at infinity}.
  We give a proof of this fact distinguishing between $\tilde\kappa<\frac{n+1}{2}$ and the limiting case
  $\tilde\kappa=\frac{n+1}{2}.$ Let us first consider $\tilde\kappa<\frac{n+1}{2}$. Define
	$\frac{1}{q_0}<\frac{1}{q_1}<\dots<\frac{1}{q_j}<\frac{1}{q_{j+1}}<\dots<\frac{n-1}{2n}$ by
	\begin{equation}\label{eq:strictly_less}
	  \frac{1}{q_0} := \frac{1}{q},\qquad 
		\frac{1}{q_{j+1}}:=\min \bigg\{\frac{1}{2}\Big(\frac{1}{q_j} + \frac{n-1}{2n}\Big),
		\frac{1}{q_j}-\frac{2}{n+1} + \frac{1}{\tilde\kappa} \bigg\} \qquad (j\in \N_0).
	\end{equation}
	Since we are assuming $\tilde{\kappa}<\frac{n+1}{2},$ at each iteration we indeed get a smaller Lebesgue 
	exponent, namely $\frac{n-1}{2n}>\frac{1}{q_{j+1}}>\frac{1}{q_j}$. 
	Then one shows that the tuple $(q_j,q_{j+1})$
	satisfies the conditions~\eqref{eq:ass-compactness} in Proposition~\ref{prop:step2-gen}, thus $K(\zeta)$ maps $L^{q_j}$ to
	$L^{q_{j+1}}.$ 

	Applying iteratively Proposition~\ref{prop:step2-gen} to the equation~\eqref{eq:ev-eq} we obtain $u\in L^{q_j}$ for all $j\in \N_0.$ From
	$\frac{1}{q_j}\nearrow \frac{n-1}{2n}$ we infer $u\in L^r$ for all $\frac{2n}{n-1}<r< q$ by interpolation as
	well as $\|u\|_r\les  \|u\|_q$.
   
    \medskip
    
  Now we consider the limiting case $\tilde\kappa=\frac{n+1}{2}$. Observe that, according to the previous
  definition~\eqref{eq:strictly_less} of  $q_{j+1},$ in this limiting situation there
  would be no decay gain at each iteration because of $q_{j+1}=q_j$. We can circumvent this by choosing a 
  decomposition  $V=V_1+V_2,$ where $V_1\in L^{\frac{n}{2}}(\R^n;\C^{m\times m})$ and $V_2\in
  L^\frac{n+1}{2}(\R^n;\C^{m\times m})$ has a small $L^\frac{n+1}{2}-$norm.

  We use this observation in order to justify a similar iteration as above, this time for exponents
  $\frac{1}{q_0}<\frac{1}{q_1}<\dots<\frac{1}{q_j}<\frac{1}{q_{j+1}}<\dots<\frac{n-1}{2n}$ given by 
  $$
  \frac{1}{q_0} := \frac{1}{q},\qquad \frac{1}{q_{j+1}}:=\min \bigg\{\frac{1}{2}\Big(\frac{1}{q_j} +
  \frac{n-1}{2n}\Big), \frac{1}{q_j}-\frac{2}{n+1} + \frac{2}{n} \bigg\} \qquad (j\in \N_0).
  $$
  We have to show that $u=K(\zeta)u,u\in L^{q_j}$ implies $u\in L^{q_{j+1}}$. Having done this,
  we conclude from  $\frac{1}{q_j}\nearrow\frac{n-1}{2n}$ and interpolation that $u\in L^r$ for all
  $\frac{2n}{n-1}<r<q$ as well as $\|u\|_r \les \|u\|_q$, which then finishes  
  the proof of our first claim.
  
  \medskip
  
  So  assume $u\in L^{q_j}$. One can choose $V=V_1 + V_2$ such that
  \begin{equation}\label{eq:smallness_V2}
	C_j \|V_2\|_{\frac{n+1}{2}}<\frac{1}{2}
  \end{equation}  
  where $C_j$ denotes the operator norm of $R_0(\zeta)\colon L^{s_j}  \to L^{q_{j+1}}$ where $s_j$ is defined
  via $\frac{1}{s_j}-\frac{1}{q_{j+1}}=\frac{2}{n+1}$.   
  Observe that this operator norm is finite due to Corollary~\ref{cor:res-free-matrix} because of
  $\frac{n+1}{2n}<\frac{1}{s_j}\leq 1.$

  We introduce the auxiliary operator $T:=I+R_0(\zeta) V_2: L^{q_{j+1}}\to L^{q_{j+1}}$. 
Using~\eqref{eq:smallness_V2} we find that $T$ is bounded and invertible due to 
Corollary~\ref{cor:Helmholtz_Bijectivity}. 
  So $T$ has a bounded inverse $T^{-1}:L^{q_{j+1}}\to L^{q_{j+1}}$.  
 Since $u$ satisfies~\eqref{eq:ev-eq}, we have
$$
		\int_{\R^n} u\phi\, \mathrm{d}x=\int_{\R^n} K(\zeta)u\phi\, \mathrm{d}x
		=-\int_{\R^n} R_0(\zeta)V_1 u\phi\, \mathrm{d}x
		- \int_{\R^n} R_0(\zeta)V_2 u\phi\, \mathrm{d}x
$$
 for any given $\phi\in C_c^\infty(\R^n;\C^m)$. 
Thus one has
\begin{equation}\label{eq:dual_T_Estimate}
	\Big| \int_{\R^n} Tu\phi\, \mathrm{d}x \Big|
	\leq\Big |\int_{\R^n} R_0(\zeta) V_1 u \phi \,\mathrm{d}x \Big|
	\leq \|R_0(\zeta)V_1 u\|_{q_{j+1}}\|\phi\|_{q_{j+1}'}
	\les  \|u\|_{q_j} \|\phi\|_{q_{j+1}'}
\end{equation}	
by Proposition~\ref{prop:step2-gen}.
By the the dual characterization of the Lebesgue norms $\|\cdot\|_{q_{j+1}}$ and using
density of the test functions in $L^{q_{j+1}'},$ one gets 
$\|Tu\|_{q_{j+1}}\leq \|u\|_{q_j}$. Then the
boundedness of $T^{-1}$ gives $\|u\|_{q_{j+1}}\les \|u\|_{q_j}$, which is what we had to prove.

\medskip

Next we prove \textit{higher integrability} of $u$. In the case $\kappa>\frac{n}{2}$ classical
Moser iteration implies $u\in L^\infty$ and the claim follows by interpolation. So it remains to prove $u\in
L^r$ for $r\in (q,\frac{2n}{n-3})$ in the limiting case $\kappa=\frac{n}{2}$. This can be proved similarly
to the limiting case $\tilde{\kappa}=\frac{n+1}{2}$ above using the decomposition $V=V_1 + V_2$ with $V_1\in
L^{\frac{n+1}{2}}(\R^n;\C^{m\times m})$ and $V_2\in L^{\frac{n}{2}}(\R^n;\C^{m\times m})$ which has a small
norm $L^{\frac{n}{2}}-$ norm.    For the sake of brevity we omit the details.

 \medskip 
 
  Finally, we discuss the $H^1_{\loc}$-regularity. In the case $\kappa=\frac{n}{2}$ solutions $u$
  of~\eqref{eq:ev-eq} satisfy $\Delta u = -(\zeta+V)u$ in~$\R^n$ in the distributional sense.  
 Since $\zeta+V\in L^{\frac{n}{2}}_{\loc}(\R^n;\C^{m\times m})$ and $u\in L^r_{\loc}$ for all $r<\frac{2n}{n-3}$, we
 conclude $\Delta u \in L^s_{\loc}$ for all $s\in [1,\frac{2n}{n+1})$. So Cald\'{e}ron-Zygmund
 estimates yield  $u\in W^{2,s}_{\loc}$ for those exponents and thus $u\in H^1_{\loc}$ by Sobolev's Embedding
 Theorem, which finishes the proof for $\kappa=\frac{n}{2}$. In the case $\kappa>\frac{n}{2}$ 
 one obtains similarly $u\in W^{2,\kappa}_{\loc}\subset H^1_{\loc}$ and the proof is
 finished.
\end{proof}

With these improved integrability and regularity properties of solutions at hand, we may turn towards the
injectivity of $I-K(\zeta)$. As in the paper by Goldberg and Schlag~\cite{GolSch_LAP} we will discuss
separately the case $\zeta \in \C\setminus \R_{\geq 0}$ and the  more involved situation 
$\zeta=\lambda\pm i0,\lambda>0$. Observe that if $u$ is a solution to~\eqref{eq:ev-eq}, then it solves 
the corresponding eigenvalue equation   
\begin{equation}\label{eq:ev-pb}
	(\Delta + \zeta)u + Vu=0,\qquad u\in L^q(\R^n;\C^m).
\end{equation}
To prove injectivity of $I-K(\zeta)$ one is thus lead to prove the absence of 
embedded eigenvalues for the Schr\"odinger operator $\Delta + V$. 
However, for $\zeta=\lambda>0$, the Helmholtz equation~\eqref{eq:ev-pb} may possess nontrivial solutions:
consider for instance $V=0$ and ordinary Herglotz waves $u$ with pointwise decay rate $|u(x)|\les
(1+|x|)^{(1-n)/2}$.
As a consequence, some extra decay condition at infinity coming from the integral representation of $u$ from~\eqref{eq:ev-eq} has to be
used in order to deduce $u\equiv 0$.
In~\cite{GolSch_LAP} the authors managed to prove that in the case $n=3,m=1$ solutions $u\in
L^4(\R^3;\C)$ of~\eqref{eq:ev-eq} even satisfy $(1+|\cdot|)^{\tau_1-\frac{1}{2}}u\in
L^2(\R^3;\C)$ for some $\tau_1>0$ so that a fundamental result by
Ionescu-Jerison~\cite{IonJer_OnTheAbsence} on the absence of embedded eigenvalues  allows to conclude $u\equiv 0$.
The ideas presented in~\cite{GolSch_LAP} are not limited to $n=3$, but carry
over to general dimensions $n\geq 2$ in a straightforward manner. In order to avoid redundancy we only state
the (scalar) results that generalize Lemma~2.3 and Proposition~2.4 from this paper.

  \begin{prop}[Goldberg-Schlag]  \label{prop:trace}
    Let $n\in\N,n\geq 2$ and assume $f\in L^p(\R^n)$ where     
    $1\leq p\leq \frac{2(n+1)}{n+3}$. Then we have for  $|t|<\frac{1}{2}$ and 
    $\gamma:= \frac{1}{2}\min\{1,\frac{n+1}{p}-\frac{n+3}{2}\}$ the
    estimate 
    $$ 
      \|\hat f((1+t)\cdot)\|_{L^2(S_\lambda)} 
      \les  |t|^{\gamma} \|f\|_p 
    $$
    provided that $\hat f$ vanishes identically on the unit sphere in $\R^n$. 
  \end{prop}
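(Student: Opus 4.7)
The plan is to combine the Tomas--Stein restriction theorem with a second endpoint estimate that exploits the vanishing hypothesis, and then to interpolate between the two endpoints. Setting $p_{TS}:=\tfrac{2(n+1)}{n+3}$, applying Tomas--Stein to the dilated sphere $S^{n-1}_{1+t}$ followed by a rescaling gives, uniformly in $|t|<\tfrac12$, the bound
\[
\|\hat f((1+t)\cdot)\|_{L^2(S^{n-1})}\lesssim \|f\|_{p_{TS}},
\]
which is the endpoint with $\gamma=0$ at $p=p_{TS}$; no vanishing is needed here.

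To produce a second endpoint, I would use the $TT^*$ identity
\[
\|\hat f((1+t)\cdot)\|_{L^2(S^{n-1})}^2 = c_n\iint_{\R^n\times\R^n} f(x)\overline{f(y)}\widehat{d\sigma}((1+t)(x-y))\,dx\,dy
\]
together with its $t=0$ counterpart, which vanishes by hypothesis. Subtracting the two yields the representation
\[
\|\hat f((1+t)\cdot)\|_{L^2(S^{n-1})}^2 = c_n\iint f(x)\overline{f(y)}K_t(x-y)\,dx\,dy,\qquad K_t(z):=\widehat{d\sigma}((1+t)z)-\widehat{d\sigma}(z).
\]
The sharp pointwise bound $\|K_t\|_\infty\lesssim|t|$ then follows from balancing the Taylor expansion near the origin (so that $K_t(z)=O(|t||z|^2)$ for $|z|\lesssim 1$) against the stationary--phase decay $|\widehat{d\sigma}(\xi)|,|\nabla\widehat{d\sigma}(\xi)|\lesssim(1+|\xi|)^{-(n-1)/2}$ at infinity. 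Combined with H\"older's inequality, this produces the endpoint
\[
\|\hat f((1+t)\cdot)\|_{L^2(S^{n-1})}^2 \leq \|K_t\|_\infty\|f\|_1^2 \lesssim |t|\|f\|_1^2,
\]
i.e.\ $\gamma=\tfrac12$ at $p=1$.

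Riesz--Thorin interpolation between these two endpoints recovers exactly the formula $\gamma(p)=\tfrac12\min\{1,\tfrac{n+1}{p}-\tfrac{n+3}{2}\}$ when $n=3$. For $n\geq 4$ the plain two--endpoint interpolation is suboptimal on the subinterval $p\leq p_0:=\tfrac{2(n+1)}{n+5}$, so I would additionally establish the intermediate endpoint $\|\hat f((1+t)\cdot)\|_{L^2(S^{n-1})}\lesssim|t|^{1/2}\|f\|_{p_0}$ and interpolate this with the Tomas--Stein endpoint. That intermediate estimate can be obtained either by a Littlewood--Paley analysis of $K_t$ on the critical scale $|z|\sim 1/|t|$ exploiting the oscillations of $\widehat{d\sigma}$, or by complex interpolation of an analytic family of operators keyed to the distance from the unit sphere.

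The main technical obstacle is precisely this sharp intermediate endpoint in dimensions $n\geq 4$: elementary Young/$L^s$ bounds on $K_t$ are insufficient because the $L^s$--integrability threshold of $K_t$ degrades with $n$. Once the intermediate endpoint is secured, standard interpolation completes the proof.
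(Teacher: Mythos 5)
Your $TT^*$-plus-interpolation scheme is precisely Goldberg--Schlag's proof of their Lemma~2.3 in three dimensions; the paper itself does not reprove this proposition but merely asserts that the argument ``carries over in a straightforward manner'' to all $n\geq 2$. For $n=3$ your argument is correct and matches that reference. The extension to other dimensions, however, has two defects, and you acknowledge only one of them.

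The claimed bound $\|K_t\|_\infty\lesssim|t|$ is \emph{false} for $n=2$. The mean-value/stationary-phase estimate you invoke actually gives
\[
|K_t(z)|\lesssim\min\{1,|t|\,|z|\}\,(1+|z|)^{-(n-1)/2},
\]
and in two dimensions the right-hand side is maximized near $|z|\sim|t|^{-1}$, where it has size $|t|^{1/2}$, not $|t|$; the asymptotics of $J_0$ confirm that $\|K_t\|_\infty\sim|t|^{1/2}$ when $n=2$. Thus your $p=1$ endpoint is $\gamma=\tfrac14$ for $n=2$, not $\gamma=\tfrac12$. That value happens to be exactly $\tfrac12\min\{1,\tfrac{n+1}{p}-\tfrac{n+3}{2}\}$ at $(n,p)=(2,1)$, and two-endpoint Riesz--Thorin against Tomas--Stein with the corrected constant reproduces the claimed exponent in full---so the conclusion is recoverable, but your proof as written asserts a false intermediate estimate.

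For $n\geq 4$ you correctly observe that interpolating $(p,\gamma)=(1,\tfrac12)$ with Tomas--Stein gives slope $\tfrac{n+1}{n-1}$ in $\tfrac1p$, whereas the claim requires slope $\tfrac{n+1}{2}$ near the Tomas--Stein endpoint, and you name the right missing target $\|\hat f((1+t)\cdot)\|_{L^2(S^{n-1})}\lesssim|t|^{1/2}\|f\|_{p_0}$ at $p_0=\tfrac{2(n+1)}{n+5}$. But you do not prove it, and---as you yourself note---pointwise bounds on $K_t$ cannot: with $\tfrac1r=2-\tfrac2{p_0}=\tfrac{n-3}{n+1}$, the $L^r$-norm of the majorant $\min\{1,|t||z|\}(1+|z|)^{-(n-1)/2}$ yields a power of $|t|$ strictly smaller than $2\gamma=1$, and the far-field integral does not even converge once $n\geq 6$. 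Closing this genuinely requires the oscillation of $\widehat{d\sigma}$, e.g.\ a dyadic decomposition of $K_t$ with the usual $L^1$--$L^\infty$ and $L^2$--$L^2$ interpolation on each annular piece, or a Stein complex-interpolation argument. As submitted, your proof is complete only for $n\leq 3$.
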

     
  \begin{prop}[Goldberg-Schlag]  \label{prop:decay}
    Let $n\in\N,n\geq 2$ and $f\in L^p(\R^n)$ for 
    $\max\{1,\frac{2n}{n+4}\}\leq  p\leq   \frac{2(n+1)}{n+3},\, (n,p)\neq (4,1)$.  
    Then we have for all $\tau_1 < \frac{1}{2}\min\{1,\frac{n+1}{p}-\frac{n+3}{2}\}$ 
    the estimate 
    $$
      \|(1+|\cdot|)^{\tau_1-\frac{1}{2}} R_0(\lambda\pm i0)  f\|_2 
      \les \|f \|_p 
    $$
    provided that $\hat f$ vanishes identically on the unit sphere in $\R^n$.
  \end{prop}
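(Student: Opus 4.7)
The strategy is to follow the argument of Lemma 2.3 and Proposition 2.4 in \cite{GolSch_LAP} and carry out the necessary dimensional bookkeeping. The baseline is that even without the vanishing hypothesis one gets an Agmon-type bound $\|(1+|\cdot|)^{-1/2-\eps}R_0(\lambda\pm i0)f\|_2 \lesssim \|f\|_p$ for admissible $p$; the assumption that $\hat f$ vanishes on the sphere $S_{\sqrt\lambda}=\{|\xi|=\sqrt\lambda\}$ provides extra cancellation near the resonant frequencies, and this cancellation is quantified precisely by Proposition~\ref{prop:trace}.

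After rescaling to $\lambda=1$ (the relevant estimate is scale-covariant), I would start from the polar representation
$$R_0(1\pm i0)f(x) = c_n \int_0^\infty \frac{r^{n-1}}{r^2-1\mp i0}\Big(\int_{S^{n-1}} \hat f(r\omega)\,e^{irx\cdot\omega}\,d\sigma(\omega)\Big)dr$$
and split via a smooth cutoff concentrated near $r=1$ into a non-resonant and a resonant piece. The non-resonant part has a Fourier symbol that is smooth and decays like $|\xi|^{-2}$, so it maps $L^p$ to any weighted $L^2$ space with weight $(1+|x|)^{s-1/2}$, $s<1/2$, by interpolating the trivial Plancherel bound with the $L^p\to L^q$ estimate supplied by Corollary~\ref{cor:res-free-matrix}. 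Hence the vanishing hypothesis is not needed there.

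For the resonant part, change variables $r=1+t$ with $|t|<1/2$; the denominator becomes $(2t+t^2)\mp i0$, and by the Plemelj–Sokhotski formula the $\delta$-contribution at $t=0$ vanishes because $\hat f\equiv 0$ on $S^{n-1}$. Only a principal-value integral survives:
$$T_{\mathrm{res}}f(x) = \text{p.v.}\int_{-1/2}^{1/2}\frac{\rho(t)}{t}\Big(\int_{S^{n-1}}\hat f((1+t)\omega)\,e^{i(1+t)x\cdot\omega}\,d\sigma(\omega)\Big)dt,$$
with $\rho$ smooth. To estimate the weighted $L^2$-norm I would run a $TT^*$-type computation: expanding $\|(1+|\cdot|)^{\tau_1-1/2}T_{\mathrm{res}}f\|_2^2$ and applying Plancherel in $x$ yields a bilinear integral over $(t_1,t_2)$ whose kernel contains the product of two spherical extension norms of $\hat f((1+t_j)\cdot)$. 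Proposition~\ref{prop:trace} bounds each such factor by $|t_j|^\gamma\|f\|_p$, where $\gamma=\tfrac{1}{2}\min\{1,\tfrac{n+1}{p}-\tfrac{n+3}{2}\}$. Combining the $|t_1t_2|^\gamma$ gain against the principal-value kernel $1/(t_1 t_2)$ produces an integral that converges precisely under $\tau_1<\gamma$.

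The main obstacle will be the careful justification of the $TT^*$-step: one needs to rewrite the squared norm so that the kernel in $(t_1,t_2)$ factors through spherical extension/restriction operators on the unit sphere, and the scaling from $1+t_j$ to $S^{n-1}$ must be tracked uniformly. The range $\max\{1,2n/(n+4)\}\le p\le 2(n+1)/(n+3)$ and the excluded case $(n,p)=(4,1)$ are dictated exactly by the sharp hypotheses of Proposition~\ref{prop:trace}, i.e., by the validity of the Stein–Tomas trace estimate in the given dimension. Once this is set up, the weight $(1+|x|)^{\tau_1-1/2}$ enters through the Bessel-potential formula $\widehat{(1+|\cdot|)^{2(\tau_1-1/2)}}$, whose singularity at $\xi=0$ is gentle enough not to interfere with the localization near $|\xi|=1$.
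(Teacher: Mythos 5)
The paper does not actually supply a proof of Proposition~\ref{prop:decay}: it cites Goldberg--Schlag~\cite{GolSch_LAP} and only asserts that the $n=3$ argument "carries over to general dimensions $n\ge 2$ in a straightforward manner." So there is no in-text proof to compare against, and your reconstruction has to stand on its own.

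Your overall strategy for the resonant part is the right one: rescale to $\lambda=1$, localize in frequency near $|\xi|=1$, use Plemelj--Sokhotski to kill the delta contribution because $\hat f$ vanishes on $S^{n-1}$, then exploit Proposition~\ref{prop:trace} to gain the factor $|t|^\gamma$ and obtain convergence of the remaining $t$-integral under $\tau_1<\gamma$. Whether one organizes this as a $TT^*$/Fourier-side bilinear computation (as you propose) or via a dyadic decomposition of the annulus $|r-1|\sim 2^{-j}$ combined with the Agmon--H\"ormander trace inequality and summation in $j$ (which is closer to Goldberg--Schlag's actual proof) is a matter of taste; both should work.

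There is, however, a genuine gap in your treatment of the non-resonant part, and a consequent misattribution of the hypotheses on $p$. You claim the non-resonant piece maps $L^p$ to weighted $L^2$ "by interpolating the trivial Plancherel bound with the $L^p\to L^q$ estimate supplied by Corollary~\ref{cor:res-free-matrix}." This does not work: the target $q=2$ is strictly outside the Kenig--Ruiz--Sogge/Guti\'errez range because those estimates require $\frac1q<\frac{n-1}{2n}$, i.e.\ $q>\frac{2n}{n-1}>2$, and interpolating the $(2,2)$ endpoint with any admissible $(p_*,q_*)$ returns $q=2$ only at $\theta=0$, i.e.\ $p=2$. The weight cannot be used to compensate either, since $(1+|x|)^{\tau_1-\frac12}\in L^r$ forces $\frac1q>\frac{n-1+2\tau_1}{2n}$, which is incompatible with the Guti\'errez range. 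The correct argument for the non-resonant piece is that its symbol decays like $|\xi|^{-2}$, so after a Mikhlin reduction it behaves like a Bessel potential of order $2$, and one uses $\mathcal J_2\colon L^p\to L^2$, which requires $\frac1p-\frac12\le\frac2n$, i.e.\ $p\ge\frac{2n}{n+4}$, with the endpoint failure when $p=1$ and $n=4$ (cf.\ the excluded corners in Theorem~\ref{thm:Bessel}). This is precisely where the lower bound $p\ge\max\{1,\frac{2n}{n+4}\}$ and the exclusion $(n,p)\ne(4,1)$ come from; they are \emph{not} dictated by Proposition~\ref{prop:trace} as you state, since Proposition~\ref{prop:trace} only needs $1\le p\le\frac{2(n+1)}{n+3}$ and imposes neither the extra lower bound nor the $(4,1)$ exclusion.
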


  Clearly, Proposition~\ref{prop:decay} generalizes to systems simply by considering each component
  separately. This is how we will deduce $(1+|\cdot|)^{\tau_1-\frac{1}{2}}u\in L^2(\R^n;\C^m)$
  for some $\tau_1>0$, which is crucial for the absence of embedded eigenvalues stated in
  Theorem~\ref{thm:abs-emb-evs} below.

To some extent this result is already contained in the work~\cite[Theorem~3]{KochTat_Absence} by Koch and
Tataru. Even so, for the reader's convenience, we decided to provide a the proof.
 
\begin{thm}\label{thm:abs-emb-evs}
	Let $n,m\in \N, n\geq 3$, assume $V\in L^{[\frac{n}{2},\frac{n+1}{2}]}(\R^n;\C^{m\times m})$ and
	let $u\in H^1_{\loc}(\R^n;\C^m)$ be a solution of~\eqref{eq:ev-pb} for $\zeta\in\R_{>0}$ satisfying
	$|x|^{\tau_1-\frac{1}{2}} u \in L^2(\R^n;\C^m)$. Then $u\equiv 0$.  
\end{thm}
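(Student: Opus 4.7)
The plan is to reduce to the scalar Koch--Tataru framework of \cite{KochTat_Absence}. The key observation, already emphasized in the discussion preceding the statement, is that the Carleman weight employed in \cite[Proposition~4]{KochTat_Absence} is chosen independently of the solution. As a consequence, one can apply it simultaneously to every scalar component $u_k$ of $u=(u_1,\dots,u_m)$ and then sum up.

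Concretely, each component satisfies the scalar equation $(\Delta+\zeta)u_k = f_k$ with $f_k := -\sum_{j=1}^m V_{kj}u_j$, so that $|f_k|\les |V||u|$ pointwise. Applying the Koch--Tataru Carleman inequality with weight $e^{\tau\varphi(|x|)}$ to each $u_k$ and adding the resulting $m$ inequalities yields a vector-valued estimate of the schematic form
\begin{equation*}
  \bigl\|e^{\tau\varphi} u\bigr\|_{L^{q}(\R^n;\C^m)}
  \les \bigl\|e^{\tau\varphi} Vu\bigr\|_{L^{p}(\R^n;\C^m)},
\end{equation*}
valid for all sufficiently large Carleman parameters $\tau$, with $(p,q)$ on the Stein--Tomas scaling line $\frac{1}{p}-\frac{1}{q}=\frac{2}{n+1}$. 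I would then split $V=V_1+V_2$ with $V_1\in L^{n/2}$ having small norm and $V_2\in L^{(n+1)/2}$, and further decompose $V_2 = V_2\chi_{|x|\leq R} + V_2\chi_{|x|>R}$. H\"older's inequality absorbs the $L^{n/2}$-part of $Vu$ into the left-hand side via the Sobolev scaling $\frac{1}{p}-\frac{1}{q}=\frac{2}{n}$, while the large-radius piece of $V_2$ becomes arbitrarily small in $L^{(n+1)/2}$ and is absorbed through the matching Stein--Tomas scaling. The decay hypothesis $|x|^{\tau_1-\frac{1}{2}}u\in L^2$ controls the remaining compactly supported part and, together with the higher-integrability bootstrapping in the spirit of Proposition~\ref{prop:regularity}, ensures that $\|e^{\tau\varphi}u\|_{L^q}$ is finite on any fixed region for every $\tau$. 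Letting $\tau\to\infty$ then forces $u$ to vanish outside arbitrarily large balls, and the strong unique continuation property for $\Delta+\zeta+V$ with $V\in L^{n/2}_{\loc}$ (again from \cite{KochTat_Absence}) propagates this to $u\equiv 0$ on all of $\R^n$.

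The main obstacle is the endpoint $L^{(n+1)/2}$-piece. Unlike the $L^{n/2}$-part, no intrinsic smallness is available there, and one must exploit the precise $\tau$-dependence of the Koch--Tataru weight, whose first-order gain as $\tau\to\infty$ is tailored to match exactly the $L^{(n+1)/2}$-scaling. The fact that this critical step carries over to the vector-valued setting without loss is entirely due to the solution-independence of $\varphi$: it lets us use a single weight, and a single Carleman parameter, uniformly across all $m$ components of $u$.
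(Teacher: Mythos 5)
Your proposal is correct and follows essentially the same path as the paper: apply the solution-independent Koch--Tataru Carleman weight componentwise, sum over components, absorb the potential terms using the $L^{n/2}$ (Sobolev) and $L^{(n+1)/2}$ (Stein--Tomas) scalings, let $\tau\to\infty$ to conclude compact support, and invoke unique continuation for $L^{n/2}_{\loc}$ potentials. The only cosmetic differences are that the paper absorbs \emph{both} potential pieces through a radial cutoff $\phi$ (which also makes the Carleman estimate applicable by localizing away from $B_1$ and renders the inner $V_2\chi_{B_R}$ contribution vacuous rather than something to estimate), and cites Jerison--Kenig rather than Koch--Tataru for the final unique continuation step.
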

\begin{proof}
The key point is the $L^p$-Carleman estimate proved in~\cite{KochTat_Absence}. More precisely, introducing
the Carleman weight $h_\epsilon(t)$ such that $ h_\epsilon'(t)=\tau_1 + (\tau e^\frac{t}{2}-
\tau_1)\frac{\tau^2}{\tau^2 + \epsilon e^t},$ $\epsilon >0,$
Koch and Tataru proved in~\cite[Proposition~4]{KochTat_Absence}  
	\begin{equation}\label{eq:Carleman}
		\|e^{h_\epsilon(\ln|x|)}v\|_{\frac{2n}{n-2} }
		+ \|e^{h_\epsilon(\ln|x|)}v\|_{ \frac{2(n+1)}{n-1}}
		\lesssim
		\inf_{(\Delta + \zeta)v=f +g}
		\|e^{h_\epsilon(\ln|x|)}f\|_{\frac{2n}{n+2}}
		+ \|e^{h_\epsilon(\ln|x|)}g\|_{\frac{2(n+1)}{n+3}}
	\end{equation}
	for all $v$ supported in $\R^n\sm B_1$ such that $|x|^{\tau_1 - \frac{1}{2}}v \in
	L^2(\R^n)$. Notice that~\eqref{eq:Carleman} is uniform with respect to
	$0<\epsilon\leq \epsilon_0$ and $\tau\geq \tau_0>0$ for some $\eps_0,\tau_0>0.$
	Notice that in~\cite{KochTat_Absence} the estimate~\eqref{eq:Carleman} is proved even for more
	general classes of Helmholtz-type operators also allowing for long-range perturbations. Moreover, the
	corresponding estimate~(7) in that paper is formulated with even stronger norms on the left and weaker
	norms on the right, as one can check using the embeddings
	\begin{align} \label{eq:Embeddings}
 \begin{aligned}
  L^{\frac{2(n+1)}{n+3}}(\R^n) +  L^{\frac{2n}{n+2}}(\R^n)
  &\hookrightarrow W^{-\frac{1}{n+1},\frac{2(n+1)}{n+3}}(\R^n),\\
  L^{\frac{2(n+1)}{n-1}}(\R^n) \cap  L^{\frac{2n}{n-2}}(\R^n)
  &\hookleftarrow  W^{\frac{1}{n+1},\frac{2(n+1)}{n-1}}(\R^n).
  \end{aligned}
	\end{align}

	\medskip

	\noindent \textit{Step 3.1: Exponential decay.}\;   As anticipated  we first show that $u$ decays at infinity
	faster than $e^{-\tau|x|^{1/2}}$ in some integrated sense. In order to apply~\eqref{eq:Carleman} for this purpose, we
	need to localize the support of $u$ to a spatial region far from the origin. So we pick a non-negative bump
	function $\phi\in C^\infty(\R^n)$ such that $\phi(x)=1$ for $|x|>2R$ and $\phi(x)=0$ for $|x|\leq R$ for a
	sufficiently large $R$ to be chosen later and define $v:=\phi u$. Then $v$
	solves
	$$
	  (\Delta+\zeta)v = \left[(\Delta\phi) u + 2\nabla \phi \cdot \nabla u + V_1 v\right] + V_2 v.
	$$
	Now we apply the scalar Carleman estimate~\eqref{eq:Carleman} to each component of the system.
	Summing up the resulting estimates one gets
	\begin{align*}
	  \begin{aligned}
		&\|e^{h_\epsilon(\ln|x|)} v\|_{\frac{2n}{n-2}}	+
			\| e^{h_\epsilon(\ln|x|)} v\|_{\frac{2(n+1)}{n-1}} \\
		&\lesssim
			  \|e^{h_\epsilon(\ln|x|)}\left((\Delta\phi) u + 2\nabla \phi \cdot \nabla u + V_1
			v\right) \|_{\frac{2n}{n+2}} +	\|e^{h_\epsilon(\ln|x|)}V_2v\|_{\frac{2(n+1)}{n+3}} \\
	   &\les_R
	    \|e^{h_\epsilon(\ln|x|)}\left(|u|+|\nabla u|\right)\chi_{B_{2R}\sm B_R} \|_{\frac{2n}{n+2}} \\
	     &+  \|e^{h_\epsilon(\ln|x|)}V_1v\|_{\frac{2n}{n+2}}
	     + \|e^{h_\epsilon(\ln|x|)}V_2v\|_{\frac{2(n+1)}{n+3}} \\
	   &\les  \|e^{h_\epsilon(\ln|x|)}\left(|u|+|\nabla u|\right)\chi_{B_{2R}\sm B_R} \|_{\frac{2n}{n+2}} \\
	   & + \|V_1 \chi_{\R^n\sm B_R}\|_{\frac{n}{2}} \|e^{h_\epsilon(\ln |x|)}     v\|_{\frac{2n}{n-2}}
	     +  \|V_2 \chi_{\R^n\sm B_R}\|_{\frac{n+1}{2}} \|e^{h_\epsilon(\ln |x|)}     v\|_{\frac{2(n+1)}{n-1}}.
   \end{aligned}
 \end{align*}
 Here we used that $\phi$ is supported in $\R^n\setminus B_R$ and $|\nabla\phi|,\Delta\phi$ are supported in
 $\ov{B_{2R}}\setminus B_R$.  For $R$  sufficiently large we can absorb the potential-dependent terms on the
 right-hand side and get $$
	\|e^{h_\epsilon(\ln|x|)} v\|_{\frac{2n}{n-2}}
	+
	\|e^{h_\epsilon(\ln|x|)} v\|_{\frac{2(n+1)}{n-1}}
	\lesssim_R \|e^{h_\epsilon(\ln|x|)}   (|u|+ |\nabla u|)\chi_{B_{2R}\sm B_R}\|_{ \frac{2n}{n+2}}.
 $$
 Since $h_\epsilon(\ln|x|)\nearrow \tau |x|^{1/2}$ as $\epsilon\to 0^+$, the Monotone Convergence
 Theorem implies
\begin{equation}\label{eq:preliminary3}
	\|e^{\tau|x|^{1/2}} v\|_{\frac{2n}{n-2}} +	\|e^{\tau|x|^{1/2}} v\|_{\frac{2(n+1)}{n-1}}
	\lesssim_R  \|e^{\tau|x|^{1/2}}   (|u|+ |\nabla u|)\chi_{B_{2R}\sm B_R} \|_{\frac{2n}{n+2}}.
\end{equation}
Since the right side is finite by Proposition~\ref{prop:regularity} (notice that the presence of the
exponential factor is irrelevant as we are localized in a bounded region), $v$ and hence   $u$ have
exponential decay at infinity.

\medskip

\noindent \textit{Step 3.2: Compact support.}\; From~\eqref{eq:preliminary3} we even infer that $u$ is
supported in $\ov{B_{2R}}$.  Indeed, $|x|^{1/2}\leq \sqrt{2R}$ on $B_{2R}\sm B_R$ implies
\begin{equation*}
	e^{-\tau \sqrt{2R}}
	\Big(
	\|e^{\tau |x|^{1/2}} v\|_{\frac{2n}{n-2}} + \|e^{\tau|x|^{1/2}} v\|_{\frac{2(n+1)}{n-1}}
	\Big)
	 \lesssim_R \| (|u|+ |\nabla u|)\chi_{B_{2R}\sm B_R} \|_{\frac{2n}{n+2}}.
\end{equation*}
Letting $\tau$ go to infinity shows that $v$ and hence $u$ vanishes identically outside $\ov{B_{2R}}$.

\medskip

\noindent \textit{Step 3.3: Triviality.}\;
In virtue of the conclusions provided by Step 3.2, proving the triviality of $u$ reduces to proving the weak
unique continuation property for the differential inequality $|\Delta u|\leq |V| |u|.$
At this stage we only need that our potential $V\in L^{[\frac{n}{2},\frac{n+1}{2}]}(\R^n;\C^{m\times m})$
belongs to $L^\frac{n}{2}_{\loc}(\R^n; \C^{m \times m}),$ as only local properties of the solution $u$ are
investigated. So~\cite[Theorem~6.3]{JerisonKenig} applies and we obtain $u\equiv 0$.
\end{proof}

Now we are in the position to prove the injectivity of $I-K(\zeta)$.

  \begin{cor}[Step 3] \label{cor:Helmholtz_Bijectivity}
	Let $n,m\in \N,$ $n\geq 3$ and assume $V=\ov{V}^T\in L^{[\kappa,\tilde\kappa]}(\R^n;\C^{m\times m})$ where
	$\frac{n}{2}\leq\kappa\leq \tilde\kappa\leq \frac{n+1}{2}$.
	Then, for $\zeta\in \C\setminus \R_{\geq 0}$ or $\zeta=\lambda \pm i 0,\lambda>0$, the operator $I
	-K(\zeta)\colon L^q(\R^n;\C^m)\to L^q(\R^n;\C^m)$ is bijective	
	and satisfies 
	$$
	  \|(I-K(\zeta))^{-1}\|_{q\to q} \leq C<\infty \qquad\text{for all }\zeta\in\C\sm\R_{\geq 0} 
	$$
	provided that $q$ satisfies~\eqref{eq:simplified-cond}.
	Moreover,   $\zeta\mapsto (I-K(\zeta))^{-1}$ is continuous on  $\C\sm\R_{\geq 0}$  and we have
	$(I-K(\zeta))^{-1}\to (I-K(\lambda\pm i0))^{-1}$  in the uniform operator topology as
	$\zeta\to\lambda,\Imag(\zeta)\to 0^\pm$.
\end{cor}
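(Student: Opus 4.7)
The plan is to combine the Fredholm alternative with case-by-case arguments for injectivity. Since $K(\zeta)\colon L^q\to L^q$ is compact by Corollary~\ref{cor:step2}, the operator $I-K(\zeta)$ is Fredholm of index zero, so surjectivity follows from injectivity. Injectivity amounts to showing that $u\in L^q(\R^n;\C^m)$ with $u=K(\zeta)u$ must be identically zero; such a $u$ solves $(\Delta+\zeta)u+Vu=0$ in the weak sense and, by Proposition~\ref{prop:regularity}, already enjoys the improved integrability $u\in L^r$ for $r$ in the stated range together with $u\in H^1_{\loc}$.

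For $\zeta\in\C\sm\R_{\geq 0}$ I would exploit that $V=\ov V^T$ makes $\Delta+V$ self-adjoint on $L^2$. Using the pointwise exponential decay of the free Green kernel $G_\zeta(z)$ at infinity (together with its local $|z|^{2-n}$ singularity), Young's inequality applied iteratively to the identity $u=-R_0(\zeta)(Vu)$ upgrades the higher integrability of $u$ provided by Proposition~\ref{prop:regularity} to $u\in L^2(\R^n;\C^m)$. Since $V\in L^{[n/2,(n+1)/2]}$ is form-bounded relative to $-\Delta$ by Sobolev embedding, the form sum $\Delta+V$ is a well-defined self-adjoint operator on $L^2$ with purely real spectrum; the relation $(\Delta+V)u=-\zeta u$ with $-\zeta\notin\R_{\leq 0}$ then forces $u\equiv 0$.

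For the boundary values $\zeta=\lambda\pm i0$, $\lambda>0$, I would follow the Goldberg--Schlag scheme. The identity $u=-R_0(\lambda\pm i0)(Vu)\in L^q$ combined with the mapping properties of the boundary resolvent requires $\widehat{Vu}$ to vanish on the sphere $S_{\sqrt{\lambda}}$. By H\"older's inequality and the improved integrability of $u$, one checks $Vu\in L^p$ for some $p\in[\max\{1,2n/(n+4)\},\,2(n+1)/(n+3)]$, so that Proposition~\ref{prop:decay}, applied componentwise, yields $(1+|\cdot|)^{\tau_1-1/2}u\in L^2(\R^n;\C^m)$ for some $\tau_1>0$. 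Combined with $u\in H^1_{\loc}$, this places $u$ in the hypotheses of Theorem~\ref{thm:abs-emb-evs} on the absence of embedded eigenvalues, which gives $u\equiv 0$.

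Finally, continuity of $\zeta\mapsto (I-K(\zeta))^{-1}$ on the closed upper and lower half-planes (including the real axis via the limits $\lambda\pm i0$) follows from the operator-norm continuity of $\zeta\mapsto K(\zeta)$ established in Corollary~\ref{cor:step2}, the just-proved pointwise bijectivity, and the identity
\[
  (I-K(\zeta_1))^{-1}-(I-K(\zeta_2))^{-1}=(I-K(\zeta_1))^{-1}(K(\zeta_1)-K(\zeta_2))(I-K(\zeta_2))^{-1}.
\]
For the global uniform bound I would combine local uniformity on compact subsets of $\C\sm\R_{\geq 0}$ (from continuity and absence of kernel) with the decay $\|K(\zeta)\|_{q\to q}\to 0$ as $|\zeta|\to\infty$ coming from~\eqref{eq:normestimate_K}, using a $\zeta$-dependent decomposition $V=V_1+V_2$ of Lebesgue type to handle the regime $|\zeta|\to 0$ (making the contribution with the worse exponent small in the relevant norm). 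The main obstacle is the injectivity step at the boundary points $\lambda\pm i0$: verifying, uniformly across the full range of $q$ permitted by~\eqref{eq:simplified-cond}, that $Vu$ lies in a Lebesgue space admissible for Proposition~\ref{prop:decay} and that its Fourier transform indeed vanishes on $S_{\sqrt{\lambda}}$, so that the weighted $L^2$-decay needed to invoke Theorem~\ref{thm:abs-emb-evs} is available.
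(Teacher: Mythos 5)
Your overall scaffolding (Fredholm index zero reduces to injectivity; $\zeta\in\C\sm\R_{\geq 0}$ versus the boundary values $\lambda\pm i0$; resolvent identity for continuity; decay of $\|K(\zeta)\|$ for large $|\zeta|$) matches the paper. But the central mechanism that makes the boundary case work is missing, and your argument for the interior case is flawed.

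For $\zeta=\lambda\pm i0$, you write that the identity $u=-R_0(\lambda\pm i0)(Vu)$ together with "the mapping properties of the boundary resolvent" forces $\widehat{Vu}$ to vanish on $S_{\sqrt\lambda}$, and then in the final paragraph you flag exactly this as the unresolved main obstacle. It is not a by-product of mapping properties: the paper derives it from the \emph{Hermitian structure} $V=\ov V^T$. Since $\langle u,Vu\rangle$ is real one has $0=\Imag\langle u,Vu\rangle$, and inserting $u=K(\lambda\pm i0)u$, $g:=Vu$ gives
$$
0=\Imag\langle u,Vu\rangle=-\Imag\langle R_0(\lambda\pm i0)g,g\rangle=\mp c\sqrt\lambda\int_{S^{n-1}}|\hat g(\sqrt\lambda\omega)|^2\,d\sigma(\omega),
$$
hence $\hat g|_{S_{\sqrt\lambda}}=0$. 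This is precisely the step Remark~\ref{rem:real-valuedness} emphasizes as the place where Hermiticity is indispensable, and without it Proposition~\ref{prop:decay} cannot be applied. Your proof as written therefore does not close.

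For $\zeta\in\C\sm\R_{\geq 0}$ your route is genuinely different (upgrade $u$ to $L^2$ via exponential kernel decay, then invoke self-adjointness of the form sum $\Delta+V$), but the conclusion "$(\Delta+V)u=-\zeta u$ with $-\zeta\notin\R_{\leq 0}$ forces $u\equiv 0$" is incorrect: $\Delta+V$ has essential spectrum $(-\infty,0]$ and may well have \emph{positive} eigenvalues (bound states of $-\Delta-V$), so self-adjointness only excludes $-\zeta\in\C\sm\R$. The paper instead uses the sign of $\Imag\langle R_0(\zeta)g,g\rangle=\Imag(\zeta)\int|\hat g|^2/|\,|\xi|^2-\zeta|^2$ (the argument of equation~(3.6) in~\cite{GolSch_LAP}) to force $Vu=0$ when $\Imag(\zeta)\neq 0$, and then $u=K(\zeta)u=0$. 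Finally, your $\zeta$-dependent decomposition of $V$ for the regime $|\zeta|\to 0$ does not help: the contribution of the $L^{(n+1)/2}$ piece carries a prefactor $|\zeta|^{-1/(n+1)}$ that diverges as $|\zeta|\to 0$ regardless of how the norms are split. The paper's decomposition (small $L^{n/2}$-part) is aimed at the large-$|\zeta|$ regime, with the bounded regime covered by the continuity of $\zeta\mapsto(I-K(\zeta))^{-1}$ established just before.
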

\begin{proof}
  Once again we write $L^s:=L^s(\R^n;\C^m)$. We  only consider the case $\zeta=\lambda +i0,$ $\lambda>0$
  since the remaining case $\zeta \in \C\setminus \R_{\geq 0}$ is straightforward and can be handled as
  in equation (3.6) in~\cite{GolSch_LAP}. We first prove that the operator is injective, so our aim is to show
  that any given solution $u$ of~\eqref{eq:ev-eq} must  be trivial.  
  Proposition~\ref{prop:regularity} implies $u\in L^r$
  whenever $\frac{n-3}{2n}<\frac{1}{r}<\frac{n-1}{2n}$. So $V\in
  L^{[\kappa,\tilde\kappa]}(\R^n;\C^{m\times m})\subset L^{[\frac{n}{2},\frac{n+1}{2}]}(\R^n;\C^{m\times m})$
  implies $Vu\in L^{[p_1,p_2]}(\R^n;\C^m)$ for some $p_1,p_2$ satisfying $$
	\frac{2}{n} + \frac{n-3}{2n}<\frac{1}{p_1}  <\frac{2}{n} + \frac{n-1}{2n},
	\qquad 
	\frac{2}{n+1}+ \frac{n-3}{2n}<\frac{1}{p_2}<\frac{2}{n+1} + \frac{n-1}{2n}.
 $$
 This implies  $Vu\in L^p$ where $\frac{2}{n} + \frac{n-3}{2n}<\frac{1}{p}  <  \frac{2}{n+1} +
 \frac{n-1}{2n}$. In particular, we deduce from 
  $u\in L^r$ for $\frac{n-3}{2n}<\frac{1}{r}<\frac{n-1}{2n}$ the statement 
 $$
   u \in L^{p'},\; g:=Vu\in L^p
   \qquad\text{whenever}\quad \frac{n+1}{2n}  <\frac{1}{p}  <  \frac{n^2+4n-1}{2n(n+1)}.
 $$
 So a density argument and $V=\ov{V}^T$ imply
\begin{align} \label{eq:equation}
  0
   = \Imag\left(\langle u, Vu \rangle\right) 
   \stackrel{\eqref{eq:ev-eq}}= \Imag\left(\langle K(\lambda+i0) u, g \rangle\right)
    =- \Imag\left(\langle R_0(\lambda+i0)g,g \rangle\right) 
  =- c\sqrt{\lambda}\int_{S^{n-1}} |\widehat{g}(\sqrt{\lambda}\omega)|^2 \mathrm{d}\sigma(\omega)
\end{align}
for some positive number $c>0$, \textit{cf}. (3.7) in~\cite{GolSch_LAP}. This implies that $\hat g$ vanishes
identically on the sphere of radius $\sqrt\lambda$ so that Proposition~\ref{prop:decay} 
(choose $\frac{2n(n+1)}{n^2+4n-1}<p<\frac{2(n+1)}{n+3}$) implies 
 $$
   (1+|\cdot|)^{\tau_1-\frac{1}{2}} u
   = (1+|\cdot|)^{\tau_1-\frac{1}{2}} K(\lambda+i0)u
   = -(1+|\cdot|)^{\tau_1-\frac{1}{2}} R_0(\lambda+i0)  g \in L^2 
 $$
 provided that $0<\tau_1<\frac{1}{2}\min\{1,\frac{n+1}{p}-\frac{n+3}{2}\}$. In view of 
 $u\in H^1_{\loc}$, which is a consequence of  Proposition~\ref{prop:regularity}, we see that the hypotheses
 of Theorem~\ref{thm:abs-emb-evs} are satisfied and we conclude $u\equiv 0$, which proves the injectivity of
 $I-K(\zeta)$ and hence its invertibility.
 
 \medskip
 
To prove the continuity of $\zeta\mapsto (I-K(\zeta))^{-1}$ from 
$\C\setminus \R_{\geq0}$ to the space $\mathcal B(L^q;L^q)$ we use the identity
\begin{equation}~\label{eq:resolvent_identity}
	(I-K(\zeta_1))^{-1}-(I-K(\zeta_2))^{-1}=
	(I-K(\zeta_1))^{-1}(K(\zeta_1)-K(\zeta_2))(I-K(\zeta_2))^{-1}
\end{equation}
for $\zeta_1,\zeta_2\in \C\sm\R_{\geq 0}$. 
Hence, for all $\zeta_1,\zeta_2$ belonging to $A\sm\R$ and compact sets $A\subset\C\sm\R_{\geq 0}$ we
have $$
  \|(I-K(\zeta_1))^{-1}-(I-K(\zeta_2))^{-1}\|_{q\to q}
  \les \|K(\zeta_1)-K(\zeta_2)\|_{q\to q}.
$$
Here we used that the operator norms of $(I-K(\zeta_1))^{-1},(I-K(\zeta_2))^{-1}$ are uniformly bounded on~$A$.
Hence, the continuity of $K$ and~\eqref{eq:resolvent_identity} imply the continuity of $\zeta\mapsto
(I-K(\zeta))^{-1}$. In the same way, the existence of a continuous extension of $\zeta\mapsto K(\zeta)$ to the positive half-axis in the operator norm topology provided by
Corollary~\ref{cor:step2} implies the existence of a continuous extension of $\zeta\mapsto (I-K(\zeta))^{-1}$
in the operator norm topology. This and Theorem~\ref{thm:Gutierrez} finally imply that $\zeta\mapsto
R(\zeta)=(I-K(\zeta))^{-1}R_0(\zeta)$ is pointwise convergent as $\zeta\to \lambda\pm i0$, $\lambda>0$ 
as claimed in Theorem~\ref{thm:main-Helmholtz-sys}. 

\medskip

Finally, using a decomposition of $V$ as in Proposition~\ref{prop:regularity} with small
$L^{\frac{n}{2}}$-part, the bound~\eqref{eq:normestimate_K} from Proposition~\ref{prop:step2-gen} implies 
$$
   \|K(\zeta)\|_{q\to q} \leq \eps + C_\eps |\zeta|^{-\frac{1}{n+1}} 
 $$ 
 for all $\eps>0,$ $\zeta\in\C\sm\R_{\geq 0}$ and some $C_\eps>0$. In particular, the norms of these
 operators tend to zero as $|\zeta|\to\infty$.  Thus, one can choose sufficiently large $R$ and sufficiently
 small $\eps$ such that $\|K(\zeta)\|_{q\to q}<\frac{1}{2}$ provided that $\zeta\in \C\setminus \R_{\geq 0}$
 and $|\zeta|\geq R.$ So the Neumann series expansion for $|\zeta|\geq R$ and the uniform boundedness for
 $|\zeta|<R$ show that $\|(I-K(\zeta))^{-1}\|\leq C<\infty$ for all $\zeta\in\C\sm\R_{\geq 0}$ and the claim
 is proved. 
\end{proof}

\begin{rem}\label{rem:real-valuedness}
  For a better understanding of the forthcoming sections, we stress here that the hypothesis of $V$ being
  Hermitian is crucial in the proof of the injectivity, thus invertibility, of $I - K(\zeta).$ Indeed, the
  vanishing of $\hat g$ on the sphere of radius $\sqrt{\lambda},$ which is a fundamental step in the proof,
  strongly relies on the identity $\Imag(\langle u,Vu\rangle)=0$, see~\eqref{eq:equation}.
\end{rem}

\section{Absence of eigenvalues for Maxwell's equations -- Proof of Theorem~\ref{thm:absenceEV}}
\label{section:absenceEV}

  In this section we prove Theorem~\ref{thm:absenceEV}. To this end we rewrite the time-harmonic Maxwell
  system~\eqref{eq:Maxwell-harmonic} as a linear Helmholtz system without first order terms so that the result
  essentially follows  from Theorem~\ref{thm:abs-emb-evs}. To write down this Helmholtz system we introduce
  some notation. For any given $\eps,\mu$ as in (A1),(A2) and $\zeta\in\C$ we define the 
  $6\times 6$ complex-valued block matrices/operators
  \begin{equation}\label{eq:matrices}
    \cV(\zeta) :=\matII{V_1(\zeta)}{-i\zeta v\times}{i\zeta v\times}{V_2(\zeta)},\quad
    \cL_1(\zeta) :=\matII{i\zeta (\eps\mu)^{\frac{1}{2}}I_3}{-\frac{1}{2}\nabla(\log\eps)\times}{
    -\frac{1}{2}\nabla(\log\mu)\times}{-i\zeta  (\eps\mu)^{\frac{1}{2}} I_3},\quad 
    \cL_2:= \matII{0}{-\nabla\times}{-\nabla\times}{0}. 
  \end{equation} 
  Here, $v:=2\nabla((\varepsilon \mu)^{\frac{1}{2}}):\R^3\to\R^3$ and $V_1(\zeta),V_2(\zeta):\R^3\to
  \C^{3\times 3}$ are given by
  \begin{equation}\label{eq:cond}
 	\begin{split}
 	&V_1(\zeta):= -\eps^{-\frac{1}{2}} \Delta(\eps^{\frac{1}{2}}) I_3
 	+\nabla \nabla^T(\log\varepsilon)   
 	- \zeta^2(\varepsilon_\infty \mu_\infty-\varepsilon \mu)I_3,\\
 	&V_2(\zeta):= - \mu^{-\frac{1}{2}} \Delta(\mu^{\frac{1}{2}}) I_3 
 	+\nabla \nabla^T(\log\mu) - \zeta^2(\varepsilon_\infty \mu_\infty-\varepsilon \mu)I_3, 
 	\end{split}
 \end{equation}
 In the following Lemma  we show that solutions of the time-harmonic Maxwell system give rise to  
 solutions of a $6\times 6$ Helmholtz system with complex-valued coefficients given
 by~\eqref{eq:matrices},\eqref{eq:cond}. 
 The proof, which is purely computational, can
 be found in Appendix A.
 
\begin{lem} \label{lem:MaxwellHelmholtz}
  Assume  (A1), $\zeta\in\C$ and let $J_e,J_m\in L^2_{\loc}(\R^3;\C^3)$ be divergence-free. 
  Then every weak solution $(E,H)\in H^1_{\loc}(\R^3;\C^6)$ of the time-harmonic Maxwell
  system~\eqref{eq:Maxwell-harmonic} satisfies
 \begin{equation}\label{eq:Helmholtz_compl}
 	(\Delta + \zeta^2\eps_\infty\mu_\infty)\vecII{\tilde E}{\tilde H} 
 	+ \cV(\zeta)\vecII{\tilde 	E}{\tilde H} 
 	= \cL_1(\zeta)\vecII{\tilde J_e}{\tilde J_m} 
 	+ \cL_2\vecII{\tilde J_e}{\tilde J_m}  \quad\text{in }\R^3
 \end{equation}
 in the weak sense where 
 $(\tilde E,\tilde H):= (\eps^{\frac{1}{2}}E,\mu^{\frac{1}{2}}H)$ and 
 $(\tilde J_e,\tilde J_m):=(\mu^{\frac{1}{2}} J_e, \eps^{\frac{1}{2}} J_m)$.
\end{lem}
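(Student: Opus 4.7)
The plan is to derive a closed second-order equation for $E$ alone by taking the curl of the second Maxwell equation and substituting the first, then pass from $E$ to $\tilde E=\eps^{\frac{1}{2}}E$ via the Leibniz rule for the Laplacian. The second block of~\eqref{eq:Helmholtz_compl} will follow by the verbatim computation with $E$ and $H$ interchanged.

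First I will extract the hidden divergence conditions: taking the divergence of the two Maxwell equations and using $\nabla\cdot(\nabla\times\cdot)\equiv 0$ together with the divergence-freeness of $J_\textup{e},J_\textup{m}$, one obtains for $\zeta\neq 0$ the identities $\nabla\cdot(\eps E)=\nabla\cdot(\mu H)=0$, i.e.
\[
\nabla\cdot E = -\nabla(\log\eps)\cdot E, \qquad \nabla\cdot H = -\nabla(\log\mu)\cdot H.
\]
Next, applying the curl-curl identity $\nabla\times(\nabla\times E)=-\Delta E+\nabla(\nabla\cdot E)$ to $\nabla\times E=-i\zeta\mu H+J_\textup{m}$ and substituting $\nabla\times H=i\zeta\eps E+J_\textup{e}$ gives
\[
\Delta E = \nabla(\nabla\cdot E) + i\zeta\,\nabla\mu\times H - \zeta^2\eps\mu\,E + i\zeta\mu\,J_\textup{e} - \nabla\times J_\textup{m}.
\]

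Now I will multiply by $\eps^{\frac{1}{2}}$ and combine with the Leibniz rule
\[
\Delta\tilde E = \eps^{\frac{1}{2}}\Delta E + 2\nabla\eps^{\frac{1}{2}}\cdot\nabla E + (\Delta\eps^{\frac{1}{2}})E.
\]
The mixed term $\nabla\eps^{\frac{1}{2}}\cdot\nabla E$ is handled via the vector identity
\[
\nabla(a\cdot b) = (a\cdot\nabla)b + (b\cdot\nabla)a + a\times(\nabla\times b) + b\times(\nabla\times a)
\]
with $a=\nabla\eps^{\frac{1}{2}}$ and $b=E$; the resulting $\nabla\times E$ is replaced once more by $-i\zeta\mu H+J_\textup{m}$, which produces exactly the cross-product terms in $\tilde H$ and $\tilde J_\textup{m}$ that appear in $\cV(\zeta)$ and $\cL_1(\zeta)$. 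Splitting $\zeta^2\eps\mu\,\tilde E = \zeta^2\eps_\infty\mu_\infty\tilde E + \zeta^2(\eps\mu-\eps_\infty\mu_\infty)\tilde E$ isolates the constant-coefficient Helmholtz piece on the left-hand side, and the relations $\nabla\mu=2\mu^{\frac{1}{2}}\nabla\mu^{\frac{1}{2}}$, $v=2\nabla(\eps\mu)^{\frac{1}{2}}$, $\nabla\log\eps=2\eps^{-\frac{1}{2}}\nabla\eps^{\frac{1}{2}}$ convert all remaining cross products in $H$, $J_\textup{m}$ into the cross products in $\tilde H$, $\tilde J_\textup{m}$ prescribed by $\cV(\zeta)$, $\cL_1(\zeta)$.

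The main obstacle is purely bookkeeping: organizing the scalar and matrix second-order pieces $\Delta\eps^{\frac{1}{2}}$, $\nabla\nabla^T(\log\eps)$, and $(E\cdot\nabla)\nabla\eps^{\frac{1}{2}}$ arising from the expansion so that they collapse precisely to the entries of $V_1(\zeta)$ in~\eqref{eq:cond}. No further analytical input is required, and the argument for the $H$-block is identical modulo the interchange $\eps\leftrightarrow\mu$, $E\leftrightarrow H$, $J_\textup{e}\leftrightarrow J_\textup{m}$.
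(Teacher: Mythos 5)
Your approach is correct, and it arrives at the same Helmholtz system, but it is organized differently from the paper's proof. The paper works throughout with the divergence-free fields $D=\eps E$, $B=\mu H$: it writes $\tilde E=\eps^{-1/2}D$, applies the Leibniz rule for $\Delta(\eps^{-1/2}D)$, and uses $-\Delta D=\nabla\times\nabla\times D$ (valid because $\nabla\cdot D=0$), then expands $\nabla\times\nabla\times(\eps E)$ via vector identities and Maxwell. This way the $\nabla(\nabla\cdot\,\cdot)$ term never appears, and the first-order terms in $D$ cancel at the end. You instead apply the curl-curl identity directly to $E$ (which is not divergence-free), retain the $\nabla(\nabla\cdot E)$ piece, then pass to $\tilde E=\eps^{1/2}E$ by Leibniz, and process the mixed term $2(\nabla\eps^{1/2}\cdot\nabla)E$ via the vector identity $\nabla(a\cdot b)=(a\cdot\nabla)b+(b\cdot\nabla)a+a\times(\nabla\times b)+b\times(\nabla\times a)$ with $a=\nabla\eps^{1/2}$. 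The price is an extra cancellation you must verify explicitly: the term $\eps^{1/2}\nabla(\nabla\cdot E)$ from Leibniz and the term $2\nabla(\nabla\eps^{1/2}\cdot E)$ from the identity must combine, via $\nabla\eps^{1/2}\cdot E=-\tfrac12\eps^{1/2}\nabla\cdot E$, to $-(\nabla\eps^{1/2})(\nabla\cdot E)=2\eps^{-1/2}(\nabla\eps^{1/2})(\nabla\eps^{1/2})^T E$, which together with $-2(E\cdot\nabla)\nabla\eps^{1/2}$ and $(\Delta\eps^{1/2})E$ reproduces $-\eps^{1/2}V_1(\zeta)E$ using $\nabla\nabla^T(\log\eps)=2\eps^{-1/2}D^2(\eps^{1/2})-2\eps^{-1}(\nabla\eps^{1/2})(\nabla\eps^{1/2})^T$. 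I have checked that this goes through and that the $H$- and $J$-cross terms assemble to $i\zeta v\times\tilde H$, $i\zeta(\eps\mu)^{1/2}\tilde J_e$, $-\tfrac12\nabla(\log\eps)\times\tilde J_m$ and $-\nabla\times\tilde J_m$ as required. So the proof plan is sound; you should nonetheless carry out the bookkeeping explicitly, since you currently defer exactly the step where a sign or factor error would be invisible. One small point in your favor: you correctly flag that $\nabla\cdot(\eps E)=\nabla\cdot(\mu H)=0$ follows from the two vectorial equations only when $\zeta\neq 0$ (the paper silently uses the same fact).
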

\medskip

\begin{rem} ~
    Lemma~\ref{lem:MaxwellHelmholtz} allows to deduce further properties of
    solutions of time-harmonic Maxwell's equations from the corresponding theory for elliptic PDEs. For
    instance, one may deduce local regularity properties as we will do in
    Proposition~\ref{prop:regularityMaxwell}. We refer to~\cite[Section~3]{Alberti_Lectures} for 
    other approaches to regularity results for time-harmonic Maxwell's equations.
    Further features such as Harnack inequalities or maximum
    principles can be proved as well. Our assumptions on the data $\eps,\mu$ may however be far from optimal.
\end{rem}
 
  \medskip
  
  \noindent\textbf{Proof of Theorem~\ref{thm:absenceEV}:} Let $(E,H)\in H^1_{\loc}(\R^3;\C^6)$ be a
  weak solution of the  homogeneous ($J_\textup{e}= J_\textup{m}=0$) time-harmonic Maxwell
  system~\eqref{eq:Maxwell-harmonic} for $\zeta\in \C$ and assume $(1+|x|)^{\tau_1-\frac{1}{2}}(|E|+|H|)\in
  L^2(\R^3)$ for some $\tau_1>0$. From~\eqref{eq:Maxwell-harmonic} we infer 
  $(1+|x|)^{\tau_1-\frac{1}{2}}(|\nabla\times E|+|\nabla\times H|)\in L^2(\R^3)$ as well as
  $$
    \int_{\R^3} \frac{1}{\mu} (\nabla\times E) \cdot (\nabla\times \phi) \dx 
    = \zeta^2 \int_{\R^3} \eps E \phi
    \qquad\text{for all }\phi\in C_c^\infty(\R^3;\C^3).
  $$
  By density of test functions and $E\in H^1_{\loc}(\R^3;\C^3)$ we obtain ($\phi=\chi \bar E$)
  \begin{equation}\label{eq:absenceEVI}
    \int_{\R^3} \frac{1}{\mu} |\nabla\times E|^2\chi  \dx
    + \int_{\R^3} \frac{1}{\mu}  (\nabla\times E) \cdot (\nabla\chi \times \bar E) \dx
    = \zeta^2 \int_{\R^3} \eps |E|^2\chi \dx
      \qquad\text{for all }\chi\in C_c^\infty(\R^3).
  \end{equation}
  We choose $\chi=\chi^*(\cdot/R)$ where $\chi^*\in C_c^\infty(\R^3)$ is a real-valued radially nonincreasing
  nonnegative function that is identically one near the origin so that $\chi^*(\cdot/R)\nearrow 1$ as $R\to\infty$. 
  Using $|\nabla\chi^*(z)|\les (1+|z|)^{-1}$ we get for $R\geq 1$
  \begin{align*}
    \left| 
    \int_{\R^3} \frac{1}{\mu}  (\nabla\times E) \cdot (\nabla \chi  \times \bar E) \dx
    \right|
    &\les R^{-1} \int_{\R^3} |\nabla\times E| |\nabla\chi^*(x/R)| |E| \,dx\\ 
    &\les R^{-1}  \int_{\R^3} |\nabla\times E| |E| (1+|x|/R)^{-1} \,dx \\ 
	&=  \int_{\R^3} |\nabla\times E| |E| \cdot (R+|x|)^{-1} \,dx \\
    &\les R^{-2\tau_1} \int_{\R^3} |\nabla\times E| |E| (1+|x|)^{2\tau_1-1} \,dx \\
    &\les R^{-2\tau_1}   \qquad (R\to\infty). 
  \end{align*}
  In other words, the second integral in~\eqref{eq:absenceEVI} vanishes as $R\to\infty$.
  
  \medskip
  
  From this we conclude as follows. In the case $\Imag(\zeta^2)\neq 0$ we take the imaginary part
  of~\eqref{eq:absenceEVI} and get from the Monotone Convergence Theorem $\int_{\R^3} \eps|E|^2=0$,
  hence $E=0$. In the case $\Real(\zeta^2)\leq 0$ we take the real part of~\eqref{eq:absenceEVI} and obtain
  $\int_{\R^3} \frac{1}{\mu} |\nabla\times E|^2  - \Real(\zeta^2)\eps|E|^2 = 0$. Again, $E=0$.
  So we have $E=0$ in both cases, which then  implies $H=0$ because of~\eqref{eq:Maxwell-harmonic}.
  This proves the absence of eigenvalues for all $\zeta\in\C\sm\R \cup\{0\}$. 
  
  \medskip

   We now prove the claim for $\zeta\in\R\sm\{0\}$.  We deduce from Lemma~\ref{lem:MaxwellHelmholtz} that
  $(\tilde E,\tilde H)\in H^1_{\loc}(\R^3;\C^6)$ is a weak solution of the Helmholtz
  system~\eqref{eq:Helmholtz_compl} for $\tilde J_e=\tilde J_m=0$.
  After decomposing $\tilde E,\tilde H$ and the coefficient matrix $\cV(\zeta)$ into real and imaginary
  part, we find that $u:=(\Real(\tilde E),\Real(\tilde H),\Imag(\tilde E),\Imag(\tilde H))$  
  is a weak solution in $H^1_{\loc}(\R^3;\R^{12})$  of a real $(12\times 12)$-Helmholtz system
  of the form  $(\Delta + \lambda)u +Vu=0$ in~$\R^3$ where $\lambda=\zeta^2\eps_\infty\mu_\infty$ and
  $V\in L^{[\frac{3}{2},2]}(\R^3;\R^{12\times 12})$. The latter is a consequence of~(A2). 
  Since our assumptions imply $(1+|x|)^{\tau_1 -\frac{1}{2}}u\in L^2(\R^3;\R^{12})$ and 
  $\lambda=\zeta^2\eps_\infty\mu_\infty>0$, Theorem~\ref{thm:abs-emb-evs} yields $u\equiv 0$ and thus
  $E\equiv H\equiv 0.$ This finishes the proof.  \qed
  
  \begin{rem}\label{rem:absenceEV}
	 In the proof of Theorem~\ref{thm:absenceEV} we used two different approaches to treat the cases
	$\zeta \in \C \setminus \R\cup\{0\}$ and  $\zeta\in \R\setminus \{0\}.$ As a matter of fact, 
	we cannot deduce the absence of   eigenvalues $\zeta\in \C \setminus \R$
	by a reduction to the Helmholtz-type system~\eqref{eq:Helmholtz_compl} as in the case $\zeta\in\R\sm\{0\}$.	
	Indeed, as soon as we allow $\Imag(\lambda)\neq 0$ (recall that $\lambda=\zeta^2\eps_\infty \mu_\infty$), the
	function $u:=(\Real(\tilde E),\Real(\tilde H),\Imag(\tilde E),\Imag(\tilde H))$ satisfies  $(\Delta +
	\Real \lambda)u + Wu + Vu=0,$ where $W$ is a constant-valued $12\times 12$-matrix given by
	\begin{equation*} W=
		\begin{pmatrix}
			0 & \Imag (\lambda) I_6\\
			-\Imag (\lambda) I_6 & 0
		\end{pmatrix}.
	\end{equation*}
	The lack of decay of $W$ rules out the possibility of applying Theorem~\ref{thm:abs-emb-evs}  and as a
	consequence one cannot conclude $u\equiv 0.$ So the difficulty of treating complex-valued potentials cannot
	be resolved just by taking the real and imaginary parts of
	the equation. On the contrary, as we will see in the next section, the treatment of complex-valued
	potentials requires a more accurate analysis of the problem. 
\end{rem}

\section{The LAP for Maxwell's equations -- Proof of Theorem~\ref{thm:main}}
\label{sec:LAP}

This section is devoted to the proof of the Limiting Absorption Principle for the time-harmonic Maxwell
system~\eqref{eq:Maxwell-harmonic}  stated in Theorem~\ref{thm:main}. We shall first give an
overview of the main steps of the proof, the rigorous details are provided afterwards.  We start by
considering $(E_\zeta, H_\zeta)\in H^1(\R^3;\C^6)$, the uniquely determined solutions of the
\textit{approximating} time-harmonic Maxwell system
 	\begin{equation}\label{eq:approximating-Maxwell}
 		i \zeta \eps E_\zeta - \nabla \times H_\zeta=-J_\textup{e}^\zeta, \qquad
		i \zeta \mu H_\zeta + \nabla \times E_\zeta=J_\textup{m}^\zeta,
 	\end{equation}
 	where $\zeta\in\C\sm\R$ and $J_\textup{e}^\zeta,J_\textup{m}^\zeta\in L^p(\R^3;\C^3)\cap L^{\tilde p}(\R^3;\C^3)\cap
 	L^2(\R^3;\C^3)$ are divergence-free currents that approximate the given divergence-free currents $J_\textup{e},J_\textup{m}\in
 	L^p(\R^3;\C^3)\cap L^{\tilde p}(\R^3;\C^3)$ as $\zeta\to \omega\in\R_{>0}$. 
 	The necessity of introducing the
 	approximating problem~\eqref{eq:approximating-Maxwell} with square integrable currents $(J_\textup{e}^\zeta,
 	J_\textup{m}^\zeta)$ comes from the fact that, up to our knowledge, the existence and uniqueness of solutions to
 	the time-harmonic Maxwell system~\eqref{eq:Maxwell-harmonic} for general divergence-free currents
 	$(J_\textup{e}, J_\textup{m})\in L^p(\R^3;\C^6)\cap L^{\tilde p}(\R^3;\C^6)$ are not known.  	
 	In our analysis we will use the results and notation of the previous sections, notably
 	$\cV(\zeta),\cL_1(\zeta),\cL_2,V_1(\zeta),V_2(\zeta)$ and the vector field 
 	$$
 	  v = 2\nabla((\varepsilon \mu)^{\frac{1}{2}})
 	$$ 
 	from~\eqref{eq:matrices} and~\eqref{eq:cond}.
 	 
 	 \medskip
 	 
 	In order to prove the Limiting Absorption Principle, the main task is to prove the convergence of
 	the sequence $(E_\zeta, H_\zeta)$ in $L^q(\R^3;\C^6)$ as $\zeta\to \omega\pm i0$.  
 	Notice that  Sobolev's Embedding Theorem implies	$(E_\zeta, H_\zeta)\in H^1(\R^3;\C^6)\subset
 	L^q(\R^3;\C^6)$ due to  $3<q<6$. From Lemma~\ref{lem:MaxwellHelmholtz} one infers that the functions
 	\begin{equation}\label{eq:def-udelta}
 		u_\zeta:=(\tilde{E}_\zeta, \tilde{H}_\zeta):=(\eps^{\frac{1}{2}}E_\zeta,\mu^{\frac{1}{2}}H_\zeta)
 	\end{equation}
 	are solutions of the Helmholtz system  
 	\begin{equation}\label{eq:Helmholtz-approximating}
 		(\Delta + \zeta^2\eps_\infty\mu_\infty)u_\zeta
 	+ \cV(\zeta)u_\zeta
 	= \cL_1(\zeta)\tilde J^\zeta
 	+ \cL_2\tilde J^\zeta,
 	\end{equation}
 	where $\cV(\zeta), \cL_1(\zeta)$ and $\cL_2$ are defined in~\eqref{eq:matrices} and $\tilde
 	J^\zeta=(\tilde J_\textup{e}^\zeta,\tilde J_\textup{m}^\zeta):=(\mu^\frac{1}{2} J_\textup{e}^\zeta, \eps^\frac{1}{2} J_\textup{m}^\zeta).$
 	Due to the explicit relation between the spectral parameters in the Maxwell and Helmholtz
 	systems, the limiting case $\zeta=\omega
 	\pm i0,\omega\in\R\sm\{0\}$ in the Maxwell system corresponds to the limiting case in the Helmholtz system
 	$\zeta=\lambda\pm \sign(\omega)i0$, $\lambda=\omega^2 \eps_\infty\mu_\infty>0$. The boundedness assumption 
 	on $\eps, \mu$ from (A1) implies that, as soon as we are able to provide a uniform bound of $\|u_\zeta\|_q$
 	as $|\Imag(\zeta)|\to 0$, a corresponding bound also holds true for $\|(E_\zeta, H_\zeta)\|_q.$

 	\medskip
 	
 	In order to prove such bounds for $\|u_\zeta\|_q$ we need to investigate the vectorial Helmholtz type
 	operators 
 	$$
 	\Delta + \zeta^2 \eps_\infty \mu_\infty I_m + \cV(\zeta),
 	$$ 
 	that we may rewrite as $(\Delta + \zeta^2 \eps_\infty \mu_\infty)(I - \mathcal K(\zeta))$ where  
 \begin{equation}\label{eq:defK_MAxwell}
 	\begin{split}
   \mathcal K(\zeta)
   &:= - R_0\big(\zeta^2\eps_\infty\mu_\infty\big)\cV(\zeta)  \qquad\qquad\quad (\zeta\in\C\sm \R) \\
   \mathcal K(\omega\pm i0)  
   &:= - R_0\big((\omega\pm i0)^2\eps_\infty\mu_\infty\big)\cV(\omega) 
   \qquad (\omega\in\R\sm \{0\}) 
 	\end{split}
 \end{equation}
  and $(\omega\pm i0)^2 \eps_\infty\mu_\infty := \omega^2\eps_\infty\mu_\infty \pm \sign(\omega)i0$. 
  Since the potential $\cV(\zeta)$ is in general not Hermitian (see~\eqref{eq:matrices}), not even for 
  $\zeta=\omega\pm i0,\omega\in\R\sm\{0\}$, we cannot verify the sufficient condition $\Imag (\langle
  u,\cV(\zeta) u\rangle)=0$ for the  bijectivity of $I - \mathcal K(\zeta)$, see the proof of
  Corollary~\ref{cor:Helmholtz_Bijectivity} and Remark~\ref{rem:real-valuedness}.   
  Nevertheless, in Proposition~\ref{prop:InjectivityEstimate} we will quantify the potential lack of
  injectivity of the operator $I-\mathcal K(\zeta)$ through the following injectivity estimates 
\begin{equation}\label{eq:injectivity-preliminary}
	\|u\|_{q_1}+\|u\|_{q_2}   \lesssim \|(I-\mathcal K(\zeta)) u\|_{q_1}  +  \|(I-\mathcal K(\zeta))
	u\|_{q_2}  
	+  \left|\Imag(\langle u, \cV(\zeta) u \rangle) \right|^{1/2}
\end{equation}
  for all $u=(u_e,u_m)\in L^{q_1}(\R^3;\C^6)\cap L^{q_2}(\R^3;\C^6)$ and $|\Real(\zeta)|\geq \delta>0$.
 We remark that it is for proving~\eqref{eq:injectivity-preliminary} that
the additional integrability assumptions from (A3) enter  the proof of Theorem~\ref{thm:main}. 

\medskip

The estimate~\eqref{eq:injectivity-preliminary} is valid for any $u=(u_e,u_m)\in L^q(\R^3;\C^6),$ no matter
whether $u$ is a solution of the Helmholtz system~\eqref{eq:Helmholtz-approximating} or not. On the other
hand, if we consider the family $u_\zeta$ of solutions to~\eqref{eq:Helmholtz-approximating} defined
in~\eqref{eq:def-udelta}, the following representation
formula in terms of the resolvent $R_0$ of the Laplacian is available 
 \begin{equation}\label{eq:HelmholtzIntegrated}
 	(I-\mathcal{K}(\zeta))u_\zeta
 	= R_0\big(\zeta^2\eps_\infty\mu_\infty\big)\big[\cL_1(\zeta)\tilde J^\zeta+\cL_2\tilde
 	J^\zeta\big].
 \end{equation}   
	Thus the injectivity estimate~\eqref{eq:injectivity-preliminary},\eqref{eq:injectivity-preliminary2} yields
   	\begin{align}\label{eq:further-injectivity}
   	  \begin{aligned}
	 \|u_\zeta\|_{q_1}+\|u_\zeta\|_{q_2}
	 &\lesssim
	\|R_0\big(\zeta^2\eps_\infty\mu_\infty\big)\big[\cL_1(\zeta)\tilde J^\zeta+\cL_2\tilde
	J^\zeta\big]\|_{q_1}
	+\|R_0\big(\zeta^2\eps_\infty\mu_\infty\big)\big[\cL_1(\zeta)\tilde J^\zeta+\cL_2\tilde
	J^\zeta\big]\|_{q_2}\\
	& \phantom{\lesssim} + \left|\Imag(\langle u, \cV(\zeta) u \rangle) \right|^{1/2}.
	\end{aligned}
   	\end{align}
 As soon as~\eqref{eq:further-injectivity} is proved, the final step will be to bound the right-hand side
 of~\eqref{eq:further-injectivity}  in terms of $\| J\|_p + \| J\|_{\tilde p}$ and other terms depending
 on $\|u_\zeta\|_{q_1}+\|u_\zeta\|_{q_2}$ that become negligible as $\zeta\to \omega\pm i0$.
 Estimating the first two terms only requires  minor modifications of  Corollary~\ref{cor:res-free-matrix}
 and it is explicitly performed in Proposition~\ref{prop:EstimateJ}. On the other hand, in order to estimate
 the third term, the structure of  Maxwell's equations~\eqref{eq:approximating-Maxwell}   comes into play.
 Indeed, in order to get the claimed bound, we shall not only make use of the explicit
 expression of $u_\zeta$ in terms of $(E_\zeta, H_\zeta)$ from~\eqref{eq:def-udelta},
 but also that $(E_\zeta, H_\zeta)$ solves Maxwell's equations~\eqref{eq:approximating-Maxwell}.
  Combining the previous steps one gets the following uniform estimate
	\begin{equation}\label{eq:boundedness-preliminary}
		\|E_\zeta\|_q + \|H_\zeta\|_q \lesssim \|J\|_p + \|J\|_{\tilde p} + o(1) \qquad\text{as
		}\zeta\to \omega\pm i0.
	\end{equation}
From~\eqref{eq:boundedness-preliminary}, in a rather standard way (see Subsection~\ref{subsec:LAP-Maxwell}),
one obtains the Limiting Absorption Principle contained in Theorem~\ref{thm:main}.
	   
 \medskip
 
 In the following $p,\tilde p,q,q_1,q_2$ are chosen as in Theorem~\ref{thm:main}. 
 We will use the notation from~\eqref{eq:matrices} and~\eqref{eq:cond}. In the proofs we will write 
 $L^s:=L^s(\R^3;\C^6)$ and similarly for $W^{1,s},L^s_{\loc}$ etc.

 \subsection{Injectivity estimates}
 
  First we recall from Proposition~\ref{prop:regularity} (with
  $\kappa=\frac{n}{2}=\frac{3}{2},\tilde\kappa=\frac{n+1}{2}=2$) the regularity and integrability properties
  of $L^q(\R^3;\C^6)$-solutions to $(I-\mathcal K(\zeta))u=0$.
  Using the definitions~\eqref{eq:matrices},\eqref{eq:defK_MAxwell} and assumption (A2) we get the following
  result.
  
\begin{prop}\label{prop:regularityMaxwell}
  Assume (A2) and let $q$ satisfy~$3<q<6$. Moreover assume $(I-\mathcal
  K(\zeta))u=0$ for some $u \in L^q(\R^3;\C^6)$ where $\zeta\in\C\sm\R$ or $\zeta=\omega\pm
  i0,$ $\omega\in\R\sm\{0\}$. Then any solution $u\in L^q(\R^3;\C^6)$ of $(I-\mathcal K(\zeta))u=0$ belongs to 
  $L^r(\R^3;\C^6)\cap H^1_{\loc}(\R^3;\C^6)$ for all $r\in (3,\infty)$. 
  Moreover, for any given such $r,q$ we have $\|u\|_r\les  \|u\|_q$. 
\end{prop}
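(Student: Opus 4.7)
The plan is to reduce the statement directly to Proposition~\ref{prop:regularity} with parameters $n=3$, $m=6$, $\kappa=\frac{3}{2}$, $\tilde\kappa=2$, and matrix-valued potential $V=\cV(\zeta)$. For these parameters the admissible range $\frac{n+1}{2n}-\frac{1}{\tilde\kappa}<\frac{1}{q}<\frac{n-1}{2n}$ from~\eqref{eq:simplified-cond} becomes $\frac{1}{6}<\frac{1}{q}<\frac{1}{3}$, i.e.\ $3<q<6$, which matches our hypothesis; and the output range $r\in(\frac{2n}{n-1},\frac{2n}{n-3})$ collapses, for $n=3$, to $r\in(3,\infty)$, exactly the desired conclusion. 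Before invoking the proposition I would record that the spectral parameter of the underlying scalar Helmholtz operator is admissible: a direct computation shows that $\zeta\in\C\sm\R$ forces $\zeta^2\eps_\infty\mu_\infty\in\C\sm\R_{\geq 0}$ (split the cases $\Real(\zeta)\neq 0$ and $\Real(\zeta)=0$), while $\zeta=\omega\pm i0$ gives $\zeta^2\eps_\infty\mu_\infty=\omega^2\eps_\infty\mu_\infty\pm\sign(\omega)i0$ with $\omega^2\eps_\infty\mu_\infty>0$, so in either case one lands in the regime treated in Section~\ref{sec:LAP_Helmholtz}.

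The one substantive step is then the verification that $\cV(\zeta)\in L^{[3/2,2]}(\R^3;\C^{6\times 6})$ under (A1),(A2). From~\eqref{eq:matrices} the off-diagonal entries are controlled pointwise by $|\zeta|\,|v|$ with $v=(\eps\mu)^{-1/2}\nabla(\eps\mu)$; by (A1) the factor $(\eps\mu)^{-1/2}$ is bounded, so $|v|\les|\nabla(\eps\mu)|$, which lies in $L^{3/2}+L^2$ by (A2). For the diagonal blocks in~\eqref{eq:cond}, I would expand
\[
  \eps^{-\frac{1}{2}}\Delta(\eps^{\frac{1}{2}})
  =\frac{1}{2}\eps^{-1}\Delta\eps-\frac{1}{4}\eps^{-2}|\nabla\eps|^2,
  \qquad
  \nabla\nabla^T(\log\eps)=\eps^{-1}D^2\eps-\eps^{-2}\nabla\eps\otimes\nabla\eps,
\]
together with the analogous identities for $\mu$, observing that each summand is pointwise dominated by a constant multiple of $|D^2\eps|+|D^2\mu|+|\nabla\eps|^2+|\nabla\mu|^2$, hence lies in $L^{3/2}+L^2$ by (A2). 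The remaining summand $-\zeta^2(\eps_\infty\mu_\infty-\eps\mu)I_3$ is of the same regularity by the first quantity listed in (A2).

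With both prerequisites in place, Proposition~\ref{prop:regularity} applies verbatim to the equation $(I-\mathcal K(\zeta))u=0$ and delivers $u\in L^r(\R^3;\C^6)\cap H^1_{\loc}(\R^3;\C^6)$ for every $r\in(3,\infty)$, together with the quantitative estimate $\|u\|_r\les\|u\|_q$. I do not foresee any genuine obstacle beyond the algebraic bookkeeping above: the hypotheses (A1) and (A2) are precisely tailored so that the Maxwell-to-Helmholtz conversion of Lemma~\ref{lem:MaxwellHelmholtz} lands in the potential class already handled in Section~\ref{sec:LAP_Helmholtz}, and the $n=3$ endpoint $\frac{2n}{n-3}=\infty$ appearing in Proposition~\ref{prop:regularity} should simply be read as the open upper bound $r<\infty$.
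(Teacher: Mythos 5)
Your proposal is correct and is essentially the argument the paper has in mind: the text preceding Proposition~\ref{prop:regularityMaxwell} explicitly says to apply Proposition~\ref{prop:regularity} with $\kappa=\frac{3}{2}$, $\tilde\kappa=2$, using the definitions~\eqref{eq:matrices},\eqref{eq:defK_MAxwell} and assumption (A2). You simply spell out the omitted bookkeeping—the admissibility of the spectral parameter $\zeta^2\eps_\infty\mu_\infty$, the pointwise expansion of $\eps^{-1/2}\Delta(\eps^{1/2})$ and $\nabla\nabla^T(\log\eps)$ showing $\cV(\zeta)\in L^{[3/2,2]}(\R^3;\C^{6\times 6})$, and the translation of the $n=3$, $\kappa=n/2$ range in Proposition~\ref{prop:regularity} to $r\in(3,\infty)$—which is exactly what the paper leaves implicit (note you also tacitly use (A1) for the boundedness of $(\eps\mu)^{-1/2}$, $\eps^{-1}$, etc., a standing hypothesis of that section even though the proposition header lists only (A2)).
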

 
 As in the Helmholtz case these integrability properties are
 actually better for $\zeta\in\C\sm\R$ where we even have $u\in H^1(\R^3;\C^6)$. 
 Next we present the crucial scalar condition ensuring the injectivity of 
 $I-\mathcal K(\zeta)=I+R_0(\zeta^2\eps_\infty\mu_\infty)[\cV(\zeta) \cdot]$. 
 It comes as no surprise that the condition to guarantee injectivity only involves $\Imag(\langle u,
 \cV(\zeta) u \rangle)$. It can be seen as an isotropic inhomogeneous variant of the
 Silver-M\"uller type radiation condition, which, roughly speaking, represents the analogue for Maxwell
 equations of the well-known Sommerfeld radiation conditions in the context of Helmholtz
 equations.
   
\begin{prop}\label{prop:injectivityCondition}
  Let (A2) hold and assume $3<q<6$.
  Moreover assume $(I-\mathcal K(\zeta))u=0$ for some $u=(u_e,u_m) \in L^q(\R^3;\C^6)$
  where $\zeta\in\C$ with $\Imag(\zeta^2)\neq 0$ or $\zeta=\omega\pm i0,$ $\omega\in\R\sm\{0\}$. Then
  \begin{equation} \label{eq:InjectivityCondition}
     \Imag(\langle u, \cV(\zeta) u \rangle)
    \qquad\Longleftrightarrow\qquad
    u= 0.
  \end{equation}
\end{prop}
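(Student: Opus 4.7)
The reverse implication in~\eqref{eq:InjectivityCondition} is immediate, so the task is to show that if $u\in L^q(\R^3;\C^6)$ satisfies $(I-\mathcal K(\zeta))u=0$ and $\Imag\langle u,\cV(\zeta)u\rangle=0$, then $u\equiv 0$. The argument splits naturally according to the two cases distinguished in the hypothesis.

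For $\zeta\in\C$ with $\Imag(\zeta^2)\neq 0$, I would rewrite the identity $u=-R_0(\zeta^2\eps_\infty\mu_\infty)\cV(\zeta)u$ as the distributional Helmholtz system $(\Delta+\zeta^2\eps_\infty\mu_\infty)u+\cV(\zeta)u=0$ on $\R^3$. Since $\zeta^2\eps_\infty\mu_\infty\notin \R_{\geq 0}$, the Green's function of $\Delta+\zeta^2\eps_\infty\mu_\infty$ decays exponentially at infinity, and this combined with the bootstrapped integrability $u\in L^r$ for all $r\in(3,\infty)$ from Proposition~\ref{prop:regularityMaxwell} and the decomposition $\cV(\zeta)\in L^{[3/2,2]}$ from (A2) yields sufficient global regularity (namely $u\in H^1(\R^3;\C^6)$) to pair the equation with $u$ and integrate by parts. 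Taking the imaginary part of the resulting scalar identity gives
\[
\Imag(\zeta^2)\,\eps_\infty\mu_\infty\,\|u\|_2^2 + \Imag\langle u,\cV(\zeta)u\rangle=0,
\]
and together with the hypothesis and $\Imag(\zeta^2)\neq 0$ this forces $u\equiv 0$.

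For the limiting case $\zeta=\omega\pm i0$ with $\omega\in\R\sm\{0\}$, I would replicate the Goldberg--Schlag scheme from the proof of Corollary~\ref{cor:Helmholtz_Bijectivity}. Setting $g:=\cV(\omega)u$ and $\lambda:=\omega^2\eps_\infty\mu_\infty>0$, Proposition~\ref{prop:regularityMaxwell} gives $u\in L^r$ for every $r\in(3,\infty)$ and $u\in H^1_{\loc}(\R^3;\C^6)$; combined with the splitting of $\cV(\omega)$ from (A2) and H\"older's inequality, one places $g$ in $L^p$ and $u$ in $L^{p'}$ for a suitable range of $p$ in a neighborhood of $\tfrac{8}{5}$. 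The classical computation (cf.~\eqref{eq:equation})
\[
\Imag\langle u,\cV(\omega)u\rangle = -\Imag\langle R_0(\lambda\pm i0)g,g\rangle = \mp c\sqrt{\lambda}\int_{S^2}|\widehat g(\sqrt{\lambda}\theta)|^2\,d\sigma(\theta)
\]
together with the standing hypothesis forces $\widehat g$ to vanish identically on the sphere of radius $\sqrt{\lambda}$. A componentwise application of Proposition~\ref{prop:decay} then yields $(1+|\cdot|)^{\tau_1-\frac{1}{2}}u\in L^2(\R^3;\C^6)$ for some $\tau_1>0$, after which Theorem~\ref{thm:abs-emb-evs} applied to the Helmholtz system $(\Delta+\lambda)u+\cV(\omega)u=0$ --- valid since $\cV(\omega)\in L^{[3/2,2]}$ by (A2) and since the theorem does not require Hermiticity of the potential --- concludes $u\equiv 0$.

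The main obstacle lies in Case~2: one has to match the integrability indices required by the resolvent-sphere identity and by Proposition~\ref{prop:decay} with those furnished by the bootstrap in Proposition~\ref{prop:regularityMaxwell} and by the splitting of $\cV(\omega)$. Once this book-keeping is done, the crucial conceptual point is that the lack of Hermiticity of $\cV(\omega)$, highlighted as an obstruction in Remark~\ref{rem:real-valuedness}, is here precisely compensated by the explicit hypothesis $\Imag\langle u,\cV(\omega)u\rangle=0$ --- the quantity that, in the Hermitian setting of Corollary~\ref{cor:Helmholtz_Bijectivity}, would vanish for free.
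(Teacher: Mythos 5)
Your proof is correct and, for the limiting case $\zeta=\omega\pm i0$, follows the paper's own argument essentially step by step (sphere vanishing of $\widehat{\cV(\omega)u}$ from the Goldberg--Schlag identity, then Proposition~\ref{prop:decay} componentwise, then Theorem~\ref{thm:abs-emb-evs} which indeed permits non-Hermitian $V$). Your treatment of the case $\Imag(\zeta^2)\neq 0$ differs from the paper: you first upgrade $u$ to $H^1(\R^3;\C^6)$ via the exponential decay of the Green's function of $\Delta+\zeta^2\eps_\infty\mu_\infty$, then integrate by parts against $\bar u$ and isolate the imaginary part, obtaining $\Imag(\zeta^2)\eps_\infty\mu_\infty\|u\|_2^2=0$. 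The paper instead avoids any global Sobolev regularity and works directly from the representation $u=-R_0(\zeta^2\eps_\infty\mu_\infty)\cV(\zeta)u$: writing $g=\cV(\zeta)u$, the hypothesis forces $\Imag\langle R_0(\zeta^2\eps_\infty\mu_\infty)g,g\rangle=0$, and since $\Imag\frac{1}{|\xi|^2-w}$ has a fixed sign for $\Imag(w)\neq 0$, Plancherel gives $g\equiv 0$ hence $u=-R_0(\cdot)g\equiv 0$ (this is the ``as in equation (3.6) of Goldberg--Schlag'' step). Your route is a little heavier since it needs $u\in H^1$ rather than just the integrability coming out of the bootstrap, but that upgrade is indeed available for nonreal spectral parameter, so both are valid. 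One small numerical slip: in the limiting case you place $g\in L^p$, $u\in L^{p'}$ with $p$ ``in a neighborhood of $\tfrac{8}{5}$''; for $n=3$ the admissible window forced by Proposition~\ref{prop:decay} and the bootstrap is $p\in\left(\tfrac{6}{5},\tfrac{4}{3}\right)$, and $\tfrac{8}{5}>\tfrac{4}{3}$ falls outside it. This does not affect the validity of the argument, but the exponent should be corrected.
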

\begin{proof}
  We only need to prove the implication ``$\Longrightarrow$'', so we assume that 
  $\Imag(\langle u, \cV(\zeta) u \rangle)=0$. This implies
  \begin{equation*}
  	\begin{split}
    0 
    &= \Imag\left(\int_{\R^3} \ov{u}\cdot \cV(\zeta) u \dx\right)  \\
    &= \Imag\left(\int_{\R^3} \ov{\mathcal K(\zeta)u}\cdot \cV(\zeta) u \dx\right) \\
    &\stackrel{\eqref{eq:defK_MAxwell}}= -\Imag\left(\int_{\R^3}
    \ov{R_0(\zeta^2\eps_\infty\mu_\infty)[\cV(\zeta)u]}\cdot \cV(\zeta) u \dx \right).
    \end{split}
  \end{equation*}
  For $\Imag(\zeta^2)\neq 0$ we deduce as in equation~(3.6) of~\cite{GolSch_LAP} that 
  $\cV(\zeta)u\equiv 0$ and hence $u\equiv \mathcal K(\zeta)u\equiv 0$ by~\eqref{eq:defK_MAxwell}.
  In the case $\zeta=\omega\pm i0,\omega\in\R\sm\{0\}$ 
  we get as in the proof of Corollary~\ref{cor:Helmholtz_Bijectivity}
  for $\lambda=\omega^2\eps_\infty\mu_\infty>0$
  \begin{align*}
    0
    &= \Imag\left(\int_{\R^3} \ov{\mathcal K(\omega\pm i0)u}\cdot \cV(\omega) u\right) \\
    &= -\Imag\left(\int_{\R^3} \ov{R_0(\lambda\pm \sign(\omega)i0)[\cV(\omega)u]}\cdot \cV(\omega)
    u\right) \\
    &= \mp \sign(\omega) c \int_{S_\lambda} |\widehat{\cV(\omega) u}|^2 \,d\sigma_\lambda
  \end{align*}    
  for some $c\neq 0$. Hence, $\widehat{\cV(\omega) u}=0$ on $S_\lambda$ in the
  $L^2$-trace sense. Moreover, $\cV(\omega)u\in L^p$ for some (sufficiently large) $p\in
  [1,\frac{4}{3})$ by assumption (A2) and Proposition~\ref{prop:regularityMaxwell}. So
  Proposition~\ref{prop:decay} implies for some $\tau_1>0$
  $$
   (1+|\cdot|)^{\tau_1-\frac{1}{2}} u
   = (1+|\cdot|)^{\tau_1-\frac{1}{2}} \mathcal K(\omega\pm i0)u
   = -(1+|\cdot|)^{\tau_1-\frac{1}{2}} R_0(\lambda\pm\sign(\omega)i0)[\cV(\omega)u] \in L^2.
  $$ 
  Given that $u$ solves the homogeneous Helmholtz system~\eqref{eq:ev-pb}  we deduce from
  Theorem~\ref{thm:abs-emb-evs} $u= 0$.
\end{proof}
  
  From this fact we deduce our injectivity estimates. We take the condition
  $\Imag(\zeta^2)\neq 0$ or $\zeta=\omega\pm i0, \omega\in\R\sm\{0\}$ from the previous proposition into
  account by restricting our attention to spectral parameters $\zeta$ with nontrivial real parts. Recall
  from Theorem~\ref{thm:Gutierrez} and Proposition~\ref{prop:step2-gen} that this implies 
  continuity properties of $\zeta\mapsto \mathcal K(\zeta)$.

\begin{prop}\label{prop:InjectivityEstimate}
  Assume (A2),(A3). Then, for any given compact subset $K\subset \{\zeta\in
  \C:\Real(\zeta)\neq 0\}$ we have for all $\zeta\in K\sm\R$ and all $u=(u_e,u_m)\in L^{q_1}(\R^3;\C^6)\cap
  L^{q_2}(\R^3;\C^6)$  
  \begin{align*}
    \|u\|_{q_1} + \|u\|_{q_2}
    &\les  \|(I-\mathcal K(\zeta))u\|_{q_1}  + \|(I-\mathcal K(\zeta))u\|_{q_2} 
    +|\Imag(\langle u, \cV(\zeta) u \rangle)|^{1/2}.
  \end{align*}
\end{prop}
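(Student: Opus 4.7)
The plan is to argue by contradiction, combining a compactness argument for $\mathcal K(\zeta)$ with the scalar injectivity criterion from Proposition~\ref{prop:injectivityCondition}, much in the spirit of Corollary~\ref{cor:Helmholtz_Bijectivity}. If the estimate failed, after normalization there would exist $\zeta_n \in K \setminus \R$ and $u_n \in L^{q_1}(\R^3;\C^6) \cap L^{q_2}(\R^3;\C^6)$ with $\|u_n\|_{q_1} + \|u_n\|_{q_2} = 1$ while
$$
\|(I-\mathcal K(\zeta_n))u_n\|_{q_1} + \|(I-\mathcal K(\zeta_n))u_n\|_{q_2} + |\Imag\langle u_n,\cV(\zeta_n) u_n\rangle|^{1/2} \longrightarrow 0.
$$
By compactness of $K$ and extraction of a subsequence with constant sign of $\Imag(\zeta_n)$, one may assume $\zeta_n \to \zeta_0$, where either $\zeta_0 \in K\setminus\R$ or $\zeta_0 = \omega \pm i0$ with $\omega \in \R\setminus\{0\}$ (the condition $\Real(\zeta)\neq 0$ being preserved in the closure of $K$). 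Banach--Alaoglu produces a weak limit $u_n \wto u$ in $L^{q_1}\cap L^{q_2}$.

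Next I would upgrade weak to strong convergence. The operator $\mathcal K(\zeta) = -R_0(\zeta^2\eps_\infty\mu_\infty)\cV(\zeta)$ is the composition of the free resolvent with the zero-order multiplication by $\cV(\zeta)$, whose diagonal blocks $V_1(\zeta),V_2(\zeta)$ lie in $L^{3/2}+L^2$ by (A2) and whose off-diagonal blocks $\mp i\zeta\, v\times$ have coefficients in the spaces furnished by (A1) and (A3) via $v = 2\nabla((\eps\mu)^{1/2})$. This joint integrability is enough to adapt Proposition~\ref{prop:step2-gen} block-wise and conclude $\mathcal K(\zeta_n) u_n \to \mathcal K(\zeta_0) u$ strongly in $L^{q_1} \cap L^{q_2}$. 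Combined with $(I-\mathcal K(\zeta_n))u_n \to 0$, this forces $u_n \to u$ strongly, and the limit satisfies $(I-\mathcal K(\zeta_0)) u = 0$ together with $\|u\|_{q_1}+\|u\|_{q_2} = 1$.

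The main obstacle is the third step: passing to the limit in the quadratic form $\langle u_n, \cV(\zeta_n) u_n\rangle$. The diagonal contributions are controlled by H\"older using $V_j(\zeta_n) \in L^{3/2}+L^2$ and $u_n \in L^{q_j}$ with $3 < q_j < 6$. The cross terms of the form $\int \zeta_n\,(v \times u_{n,e})\cdot \overline{u_{n,m}}\dx$ are exactly where assumption (A3) enters: it guarantees $v \in L^{q_1/(q_1-2)}+L^{q_2/(q_2-2)}$, which are the H\"older-conjugate exponents of $L^{q_j}\cdot L^{q_j}$ for $j=1,2$, and hence allows one to pair $v$ with $u_{n,e}\otimes \overline{u_{n,m}}$. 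Strong convergence $u_n \to u$ in $L^{q_1}\cap L^{q_2}$, boundedness of $\zeta_n$ and continuity of $\zeta\mapsto\cV(\zeta)$ in the operator-theoretic sense then yield $\langle u_n, \cV(\zeta_n) u_n \rangle \to \langle u, \cV(\zeta_0) u \rangle$, so that in particular $\Imag\langle u, \cV(\zeta_0) u\rangle = 0$. Proposition~\ref{prop:injectivityCondition} applied at $\zeta_0$ now forces $u \equiv 0$, contradicting $\|u\|_{q_1}+\|u\|_{q_2}=1$ and completing the argument.
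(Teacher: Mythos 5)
Your argument is correct and follows the same route as the paper: contradiction, normalization, extraction of a weakly convergent subsequence, upgrade to strong convergence via the compactness/continuity properties of $\mathcal K$ (Corollary~\ref{cor:step2}/Proposition~\ref{prop:step2-gen}), passage to the limit in the quadratic form using (A3), and then Proposition~\ref{prop:injectivityCondition}. One small imprecision: the off-diagonal blocks $\mp i\zeta\,v\times$ lie in $L^{3/2}+L^2$ thanks to (A1)--(A2) (since $v=(\eps\mu)^{-1/2}\nabla(\eps\mu)$ and $|\nabla(\eps\mu)|\in L^{3/2}+L^2$), not (A3); assumption (A3) is only used, as you correctly say later, to pass to the limit in $\Imag\langle u_n,\cV(\zeta_n)u_n\rangle$.
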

\begin{proof}
  We argue by contradiction and assume that there are sequences $(\zeta^j)\subset K$ and $(u^j)\subset
  L^{q_1}\cap L^{q_2}$ such that $\|u^j\|_{q_1}+\|u^j\|_{q_2}=1$ and
  \begin{align}\label{eq:InjectivityEstimateAssumption}
    \begin{aligned}
    \|(I-\mathcal K(\zeta^j))u^j\|_{q_1} + 
    \|(I-\mathcal K(\zeta^j))u^j\|_{q_2} \to 0, \quad
    \Imag(\langle u^j, \cV(\zeta^j) u^j \rangle)
    \to 0. 
  \end{aligned}
  \end{align}
  We choose subsequences such that  $\zeta^j\to \zeta^*\in K$ and $u^j\wto u^*$ in $L^{q_1}$ and
  $L^{q_2}$.
  In the case $\zeta^*\in\R\sm\{0\},\Imag(\zeta^j)\to 0^+$ we will write $\mathcal K(\zeta^*)$ instead of
  $\mathcal K(\zeta^*+ i0)$ for notational simplicity. Clearly similar arguments apply in the case
  $\Imag(\zeta^j)\to 0^-$.
   The second part of Corollary~\ref{cor:step2} and~\eqref{eq:defK_MAxwell} imply $\mathcal
  K(\zeta^j)u^j\to \mathcal K(\zeta^*)u^*$ so that $\|(I-\mathcal K(\zeta^j))u^j\|_{q_1}+\|(I-\mathcal
  K(\zeta^j))u^j\|_{q_2}\to 0$ gives $(I- \mathcal K(\zeta^*))u^*=0$ and thus $u^j\to u^*$ in $L^q_1$ and in
  $L^{q_2}$.
   From the second
   part of~\eqref{eq:InjectivityEstimateAssumption} we want to deduce that the injectivity
   condition~\eqref{eq:InjectivityCondition} holds for $(u^*,\zeta^*)$. So we need to show
   $$
     \Imag(\langle u^*, \cV(\zeta^*) u^* \rangle
     = \lim_{j\to\infty} \Imag(\langle u^j, \cV(\zeta^j) u^j \rangle
     = 0.
   $$
   To verify the first equality we use formula
   \begin{align} \label{eq:injectivity-preliminary2}
     \begin{aligned}
    &\Imag\left(\int_{\R^3} \ov{u}\cdot \cV(\zeta) u \dx\right) \\
    &= \Imag \left(\int_{\R^3} \ov u_e V_1(\zeta) u_e + \ov u_m V_2(\zeta) u_m  
      + i\zeta \big(\ov u_e\cdot (v\times u_m)- \ov u_m\cdot (v\times u_e)\big) \dx \right)\\
    &= - \Imag(\zeta^2) \int_{\R^3} (\eps_\infty\mu_\infty-\eps\mu)|u|^2 \dx - 
    \Imag \left(i\zeta \int_{\R^3} v\cdot (u_m\times \ov u_e) - v\cdot (u_e\times \ov u_m)  \dx \right)   \\
    &= \Imag(\zeta^2) \int_{\R^3} (\eps\mu-\eps_\infty\mu_\infty)|u|^2\dx
    - \Imag \left(i\zeta \int_{\R^3} v\cdot  2\Real(u_m\times \ov u_e) \dx \right)    \\
    &=  \Imag(\zeta^2) \int_{\R^3} (\eps\mu-\eps_\infty\mu_\infty)|u|^2 \dx - 2\Real(\zeta) \int_{\R^3} v\cdot
    \Real(u_m\times \ov u_e) \dx.  
  \end{aligned}
  \end{align}
   We write 
   $|\eps_\infty\mu_\infty-\eps\mu|+|v|=m_1+m_2$ where 
   $m_1\in L^{\frac{q_1}{q_1-2}}(\R^3),m_2\in L^{\frac{q_2}{q_2-2}}(\R^3)$. This is possible due to 
   assumption (A3).  
  \begin{align*}
     &\left|\Imag( (\zeta^j)^2)\int_{\R^3} (\eps\mu-\eps_\infty\mu_\infty)|u^j|^2 \dx - \Imag(
    (\zeta^*)^2)\int_{\R^3} (\eps\mu-\eps_\infty\mu_\infty)|u^*|^2\dx \right| \\  
     &\leq  |(\zeta^j)^2-(\zeta^*)^2|\int_{\R^3} (m_1+m_2) |u^j|^2  \dx
     + |\zeta^*|^2\int_{\R^3} (m_1+m_2) ||u^j|^2-|u^*|^2| \dx \\
     &\les |(\zeta^j)^2-(\zeta^*)^2| 
     (\|m_1\|_{\frac{q_1}{q_1-2}} \|u^j\|_{q_1}^2 + \|m_2\|_{\frac{q_2}{q-2}} \|u^j\|_{q_2}^2) \\
     &+ \|m_1\|_{\frac{q_1}{q_1-2}} \|u^j-u\|_{q_1}\||u^j|+|u|\|_{q_1}  
      + \|m_2\|_{\frac{q_2}{q_2-2}} \|u^j-u\|_{q_2}\||u^j|+|u|\|_{q_2}\\
      &\les   (\|m_1\|_{\frac{q_1}{q_1-2}} + \|m_2\|_{\frac{q_2}{q_2-2}})
       (|(\zeta^j)^2-(\zeta^*)^2|+\|u_j-u\|_{q_1} +\|u_j-u\|_{q_2})  \\
      &= o(1)\qquad (j\to\infty).
  \end{align*}
  Analogous computations yield 
  $$
    \Real(\zeta^j) \int_{\R^3} v\cdot \Real(u_m^j\times \ov u_e^j) \dx
    \to \Real(\zeta^*) \int_{\R^3} v\cdot  \Real(u_m^*\times \ov u_e^*) \dx
    \qquad (j\to\infty). 
  $$
  So~\eqref{eq:InjectivityCondition} holds and Proposition~\ref{prop:injectivityCondition} implies $u^*=0$.
  This however contradicts $u^j\to u^*=0$ and $\|u^j\|_{q_1}+\|u^j\|_{q_2}=1$. So the assumption was false,
  which proves the claim.
\end{proof}

 \subsection{Bounds for $E_\zeta,H_\zeta$}

  Proposition~\ref{prop:InjectivityEstimate} makes it possible to  bound the $L^q$-norm of solutions
  $u_\zeta:=(u_e^\zeta,u_m^\zeta):=(\tilde{E}_\zeta, \tilde{H}_\zeta)$ of the Helmholtz
  system~\eqref{eq:Helmholtz_compl} with $\zeta\in\C\sm\R$
  in terms of $J$  as soon as we find suitable bounds for 
 $\Imag(\langle u_\zeta, \cV(\zeta) u_\zeta \rangle).$
 Those are provided in the next proposition.
 
\begin{prop}\label{prop:ExtraCondition}
  Let the assumptions (A1),(A2),(A3)  hold. Then, for any given $\zeta\in\C\sm\R$ the solutions 
  $u_\zeta:=(u_e^\zeta, u_m^\zeta):=(\tilde E_\zeta,\tilde H_\zeta):=(\eps^\frac{1}{2} E_\zeta,
  \mu^\frac{1}{2} H_\zeta)\in L^q(\R^3;\C^6)$ of the Helmholtz system~\eqref{eq:Helmholtz_compl} satisfy
  \begin{align*}
    \begin{aligned}
    \int_{\R^3} v\cdot \Real(u_m^\zeta\times \ov{u_e^\zeta})   \dx
    &=  \Imag(\zeta) \int_{\R^3}  (\eps\mu-\eps_\infty\mu_\infty) |u_\zeta|^2  \dx \\
    &+ \int_{\R^3}  (\eps\mu-\eps_\infty\mu_\infty)\,  \Real\left(\mu^{-\frac{1}{2}}\ov{J_\textup{m}^\zeta}\cdot
    u_m^\zeta - \eps^{-\frac{1}{2}}J_\textup{e}^\zeta\cdot \ov{u_e^\zeta}\dx\right).
     \end{aligned}  
 \end{align*}
  In particular, for $\zeta\in K\sm \R$ and any compact set $K\subset \C$,
 \begin{align}\label{eq:extraConditionII}
   \begin{aligned}
    |\Imag(\langle u_\zeta, \cV(\zeta) u_\zeta \rangle)|
    \les |\Imag(\zeta)|
    (\|u_\zeta\|_{q_1}+\|u_\zeta\|_{q_2})^2 + (\|J_\textup{e}^\zeta\|_p+\|J_\textup{m}^\zeta\|_p)(\|u_\zeta\|_{q_1}+\|u_\zeta\|_{q_2}).
    \end{aligned}
 \end{align}
\end{prop}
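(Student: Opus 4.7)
The plan is to first establish the displayed identity for $\int_{\R^3} v\cdot \Real(u_m^\zeta\times\ov{u_e^\zeta})\dx$ by integration by parts combined with the Maxwell equations~\eqref{eq:approximating-Maxwell}, and then to plug this identity into~\eqref{eq:injectivity-preliminary2} in order to obtain the bound~\eqref{eq:extraConditionII} via H\"older estimates.

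For the identity, I would first unfold the definitions: using $u_e^\zeta=\eps^{1/2}E_\zeta$, $u_m^\zeta=\mu^{1/2}H_\zeta$ and $v=2\nabla((\eps\mu)^{1/2})$, the integrand is rewritten as $\nabla(\eps\mu-\eps_\infty\mu_\infty)\cdot(H_\zeta\times\ov{E_\zeta})$. A cutoff argument with $\chi_R(x)=\chi(x/R)$ transfers the gradient onto the vector field, the error $R^{-1}\int_{B_{2R}\sm B_R}|\eps\mu-\eps_\infty\mu_\infty||E_\zeta||H_\zeta|\dx$ going to zero by~(A2), the Sobolev embedding $H^1(\R^3)\hookrightarrow L^4\cap L^6$, and the global $H^1$-regularity of $(E_\zeta,H_\zeta)$ (which is available because $\zeta\notin\R$). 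Then the vector identity $\nabla\cdot(H\times\ov E)=\ov E\cdot\nabla\times H-H\cdot\nabla\times\ov E$, together with $\nabla\times H_\zeta=i\zeta\eps E_\zeta+J_e^\zeta$ and $\nabla\times\ov{E_\zeta}=\ov{J_m^\zeta}+i\bar\zeta\mu\ov{H_\zeta}$, yields
$$
  \nabla\cdot(H_\zeta\times\ov{E_\zeta})
  = i\zeta\eps|E_\zeta|^2 - i\bar\zeta\mu|H_\zeta|^2
  + \ov{E_\zeta}\cdot J_e^\zeta - H_\zeta\cdot\ov{J_m^\zeta}.
$$
Taking real parts and exploiting $\Real(i\zeta)=\Real(-i\bar\zeta)=-\Imag(\zeta)$ together with $\eps|E_\zeta|^2+\mu|H_\zeta|^2=|u_\zeta|^2$, then reverting to the variables via $H_\zeta=\mu^{-1/2}u_m^\zeta$ and $\ov{E_\zeta}=\eps^{-1/2}\ov{u_e^\zeta}$, delivers the claimed identity.

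For~\eqref{eq:extraConditionII}, set $A:=\int_{\R^3}(\eps\mu-\eps_\infty\mu_\infty)|u_\zeta|^2\dx$ and let $C$ denote the current-dependent integral just produced. By~\eqref{eq:injectivity-preliminary2} and the identity from the previous step, $\Imag\langle u_\zeta,\cV(\zeta)u_\zeta\rangle = \Imag(\zeta^2)A-2\Real(\zeta)\bigl(\Imag(\zeta)A+C\bigr)$. The triangle inequality, combined with $|\Imag(\zeta^2)|=2|\Real(\zeta)||\Imag(\zeta)|\les|\Imag(\zeta)|$ and $|\Real(\zeta)|\les 1$ on the compact set $K$, then gives $|\Imag\langle u_\zeta,\cV(\zeta)u_\zeta\rangle|\les |\Imag(\zeta)||A|+|C|$. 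The decomposition $|\eps\mu-\eps_\infty\mu_\infty|\leq m_1+m_2$ with $m_j\in L^{q_j/(q_j-2)}(\R^3)$ furnished by~(A3) together with H\"older gives $|A|\les(\|u_\zeta\|_{q_1}+\|u_\zeta\|_{q_2})^2$. For $|C|$, after using~(A1) to absorb the uniformly bounded factors $\eps^{-1/2},\mu^{-1/2}$, I would apply the same splitting and then a three-factor H\"older inequality pairing $m_j$ against the current and a $u$-factor, producing $|C|\les(\|J_e^\zeta\|_p+\|J_m^\zeta\|_p)(\|u_\zeta\|_{q_1}+\|u_\zeta\|_{q_2})$.

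The main obstacle I expect is the bookkeeping of exponents in the H\"older estimate of $|C|$: assumption~(A3) is calibrated so that, for each $j\in\{1,2\}$, the triple $(p,\tilde p,q_j)$ satisfies~\eqref{eq:LAP_Maxwell_conditions}, and this is exactly what has to be exploited for the three-factor inequality to close at indices reachable from $\|u_\zeta\|_{q_1}+\|u_\zeta\|_{q_2}$ (possibly after interpolating between $q_1$ and $q_2$). The secondary technical step is the justification of the integration by parts in the identity; here the splitting $\eps\mu-\eps_\infty\mu_\infty\in L^{3/2}(\R^3)+L^2(\R^3)$ from~(A2), coupled with $E_\zeta,H_\zeta\in L^4(\R^3)\cap L^6(\R^3)$ by Sobolev, ensures $|\eps\mu-\eps_\infty\mu_\infty||E_\zeta||H_\zeta|\in L^1(\R^3)$ and makes the boundary contribution in the cutoff vanish.
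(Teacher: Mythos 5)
Your proposal follows essentially the same route as the paper: rewrite $v$ as $(\eps\mu)^{-1/2}\nabla(\eps\mu-\eps_\infty\mu_\infty)$, integrate by parts, apply the divergence identity for $H\times\ov E$ together with Maxwell's equations, take real parts, and then feed the resulting identity into~\eqref{eq:injectivity-preliminary2} with the $(m_1,m_2)$ decomposition from (A3) and H\"older. The paper does not spell out the cutoff justification of the integration by parts, which you supply via $H^1$-regularity and $E_\zeta,H_\zeta\in L^4\cap L^6$; this is a sound and welcome addition, but not a different method.
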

\begin{proof}
   We recall from~\eqref{eq:cond} the identity
   $$
     v
     = 2\nabla((\eps\mu)^{1/2})
     = (\eps\mu)^{-\frac{1}{2}} \nabla(\eps\mu) 
     = (\eps\mu)^{-\frac{1}{2}} \nabla(\eps\mu-\eps_\infty\mu_\infty) .
   $$
   Then integration by parts gives 
  \begin{align*}
    &\int_{\R^3} v\cdot \Real(u_m^\zeta\times \ov{u_e^\zeta}) \dx \\
    &=  \int_{\R^3} \nabla (\eps\mu-\eps_\infty\mu_\infty)\cdot \Real\left( H_\zeta\times \ov{ E_\zeta}
    \dx\right) \\
    &=  -\int_{\R^3}  (\eps\mu-\eps_\infty\mu_\infty)\, \left[\nabla\cdot \Real\left( H_\zeta\times
    \ov{ E_\zeta}\right)\right]  \dx  \\
    &=  -\int_{\R^3} (\eps\mu-\eps_\infty\mu_\infty)\, \Real\left( (\nabla \times  H_\zeta)\cdot \ov { E_\zeta} 
    - (\ov{\nabla \times   E_\zeta})\cdot  H_\zeta\right)  \dx \\
    &= - \int_{\R^3} (\eps\mu-\eps_\infty\mu_\infty)\, \Real\left( (i\zeta \eps E_\zeta+J_\textup{e}^\zeta)\cdot \ov E_\zeta 
    - \ov{(-i\zeta \mu H_\zeta+J_\textup{m}^\zeta)}\cdot H_\zeta\right) \dx \\
    &= - \int_{\R^3}  (\eps\mu-\eps_\infty\mu_\infty)\, \Real\left( i\zeta \eps|E_\zeta|^2 + J_\textup{e}^\zeta\cdot \ov {E_\zeta} 
    - i \ov{\zeta} \mu |H_\zeta|^2  - \ov{J_\textup{m}^\zeta}\cdot H_\zeta \right)  \dx \\
    &= \Imag(\zeta) \int_{\R^3}  (\eps\mu-\eps_\infty\mu_\infty)\,  \left(\eps|E_\zeta|^2  
     + \mu |H_\zeta|^2\right) \dx
     + \int_{\R^3}  (\eps\mu-\eps_\infty\mu_\infty)\, \Real \left( \ov{J_\textup{m}^\zeta}\cdot H_\zeta - 
     J_\textup{e}^\zeta\cdot \ov {E_\zeta}  \dx \right)   \\
    &=  \Imag(\zeta) \int_{\R^3}  (\eps\mu-\eps_\infty\mu_\infty) |u_\zeta|^2 \dx
     + \int_{\R^3}  (\eps\mu-\eps_\infty\mu_\infty)\,  \Real\left(\mu^{-\frac{1}{2}}\ov{J_\textup{m}^\zeta}\cdot u_m^\zeta - 
     \eps^{-\frac{1}{2}}J_\textup{e}^\zeta\cdot \ov{u_e^\zeta} \right) \dx. 
  \end{align*}
  So  H\"older's inequality and (A3) imply with $m_1,m_2$ as in the proof of
  Proposition~\ref{prop:InjectivityEstimate}  
  \begin{align*}
    \left|2\Real(\zeta)  
    \int_{\R^3} v\cdot \Real(u_m^\zeta\times \ov{u_e^\zeta}) \dx \right| 
    &\les  |\Imag(\zeta)| (\|m_1\|_{\frac{q_1}{q_1-2}}\|u_\zeta\|_{q_1}^2 +
    \|m_2\|_{\frac{q_2}{q_2-2}}\|u_\zeta\|_{q_2}^2) \\
    &+  \int_{\R^3}  (m_1+m_2) (|J_\textup{m}^\zeta| |u_m^\zeta| + |J_\textup{e}^\zeta| |u_e^\zeta|) \dx
 \end{align*} 
 We have $m_1\in L^{\frac{q_1}{q_1-2}}(\R^3)\cap L^\infty(\R^3)$
 and $m_2\in L^{\frac{q_2}{q_2-2}}(\R^3)\cap L^\infty(\R^3)$ by (A1),
 $J_\textup{m}^\zeta,J_\textup{e}^\zeta\in L^p\cap L^{\tilde p}$ and $u_m^\zeta,u_e^\zeta\in L^{q_1}\cap L^{q_2}$. 
 Our assumptions on $p,\tilde p,q_1,q_2$ from the theorem imply 
 \begin{align*}
   \frac{1}{\infty}+\frac{1}{\tilde p}+\frac{1}{q_1}&\leq 1\leq \frac{q_1-2}{q_1}+\frac{1}{p}+\frac{1}{q_1},
   \\
   \frac{1}{\infty}+\frac{1}{\tilde p}+\frac{1}{q_2}&\leq 1\leq \frac{q_2-2}{q_2}+\frac{1}{p}+\frac{1}{q_2}.
 \end{align*}
 So we can find suitable exponents for  H\"older's inequality and thus obtain 
 \begin{align*}
    \left|2\Real(\zeta)  
    \int_{\R^3} v\cdot \Real(u_m^\zeta\times \ov{u_e^\zeta}) \dx \right| 
    &\les  |\Imag(\zeta)| (\|u_\zeta\|_{q_1} + \|u_\zeta\|_{q_2})^2 \\
    &+  (\|J_\textup{e}^\zeta \|_p+\|J_\textup{m}^\zeta\|_p+
    \|J_\textup{e}^\zeta \|_{\tilde p}+\|J_\textup{m}^\zeta\|_{\tilde p})(\|u_\zeta\|_{q_1} + \|u_\zeta\|_{q_2}).
  \end{align*} 
 Moreover, $\Imag(\zeta^2)=2\Real(\zeta)\Imag(\zeta)$ gives  
 $$
   \left| 
    \Imag( \zeta^2)\int_{\R^3} (\eps\mu-\eps_\infty\mu_\infty)|u_\zeta|^2 \dx\right| 
    \les  |\Imag(\zeta)| (\|u_\zeta\|_{q_1}+\|u_\zeta\|_{q_2})^2  
 $$
  so that~\eqref{eq:extraConditionII} is proved in view of~\eqref{eq:injectivity-preliminary2}. 
\end{proof}

  Combining this fact and the injectivity estimates from Proposition~\ref{prop:InjectivityEstimate} we obtain
   uniform  bounds for the solutions $(E_\zeta,H_\zeta)$ provided that $|\Imag(\zeta)|$ is sufficiently
   small.

\begin{cor} \label{cor:Maxwellbounds}
  Let the assumptions (A1),(A2),(A3) hold and let  $K\subset \C$ be
  compact. Then, for $|\Imag(\zeta)|$ sufficiently small, any solution $(E_\zeta,H_\zeta)$ of the
  time-harmonic Maxwell system~\eqref{eq:Maxwell-harmonic} for $\zeta\in K\sm \R$ satisfies 
  \begin{align}\label{eq:ControlEHdelta}
    \begin{aligned}
    \|E_\zeta\|_q   + \|H_\zeta\|_q  
    &\les\|R_0(\zeta^2\eps_\infty\mu_\infty)[\mathcal L_1(\zeta)\tilde J^\zeta
    + \mathcal L_2\tilde J^\zeta ]\|_{q_1} \\
    &+ \|R_0(\zeta^2\eps_\infty\mu_\infty)[\mathcal L_1(\zeta)\tilde J^\zeta
    + \mathcal L_2\tilde J^\zeta ]\|_{q_2}   + \|J^\zeta\|_p + \|J^\zeta\|_{\tilde p}. 
    \end{aligned} 
  \end{align}
\end{cor}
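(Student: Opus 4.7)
The plan is to combine the injectivity estimate from Proposition~\ref{prop:InjectivityEstimate} with the quantitative bound on $\Imag(\langle u_\zeta,\cV(\zeta)u_\zeta\rangle)$ provided by Proposition~\ref{prop:ExtraCondition}, applied to the particular function $u_\zeta=(\eps^{1/2}E_\zeta,\mu^{1/2}H_\zeta)$. By Lemma~\ref{lem:MaxwellHelmholtz}, $u_\zeta$ satisfies the Helmholtz system~\eqref{eq:Helmholtz-approximating}, which through the definition~\eqref{eq:defK_MAxwell} of $\mathcal K(\zeta)$ is exactly equivalent to the integrated form~\eqref{eq:HelmholtzIntegrated},
\[
  (I-\mathcal K(\zeta))u_\zeta
  = R_0(\zeta^2\eps_\infty\mu_\infty)\bigl[\mathcal L_1(\zeta)\tilde J^\zeta+\mathcal L_2\tilde J^\zeta\bigr].
\]
So the plan is to interpret the right-hand side of~\eqref{eq:HelmholtzIntegrated} as the ``source'' that drives the injectivity estimate and then to control the anomalous imaginary-part term by the explicit formula derived in Proposition~\ref{prop:ExtraCondition}.

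Concretely, first I would apply Proposition~\ref{prop:InjectivityEstimate} to $u_\zeta$ for $\zeta\in K\sm\R$, which is permissible because $K$ is a compact subset of $\{\zeta\in\C:\Real(\zeta)\neq 0\}$ (assuming without loss of generality that $0\notin K$, otherwise shrink $K$). Substituting~\eqref{eq:HelmholtzIntegrated} into the resulting bound yields
\[
  \|u_\zeta\|_{q_1}+\|u_\zeta\|_{q_2}
  \les B_\zeta + \bigl|\Imag(\langle u_\zeta,\cV(\zeta)u_\zeta\rangle)\bigr|^{1/2},
\]
where $B_\zeta$ abbreviates the two $R_0$-terms appearing on the right-hand side of the corollary. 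Next I would invoke~\eqref{eq:extraConditionII} from Proposition~\ref{prop:ExtraCondition}, together with the elementary inequality $(a+b)^{1/2}\leq a^{1/2}+b^{1/2}$, to estimate
\[
  \bigl|\Imag(\langle u_\zeta,\cV(\zeta)u_\zeta\rangle)\bigr|^{1/2}
  \les |\Imag(\zeta)|^{1/2}\bigl(\|u_\zeta\|_{q_1}+\|u_\zeta\|_{q_2}\bigr)
  + \bigl(\|J^\zeta\|_p\bigr)^{1/2}\bigl(\|u_\zeta\|_{q_1}+\|u_\zeta\|_{q_2}\bigr)^{1/2}.
\]

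The core step is then an absorption argument. Young's inequality $ab\leq \eps a^2 + (4\eps)^{-1}b^2$ converts the last product into $\eps\bigl(\|u_\zeta\|_{q_1}+\|u_\zeta\|_{q_2}\bigr)+C_\eps\|J^\zeta\|_p$, and by taking $\eps$ small and $|\Imag(\zeta)|$ small enough (depending only on $K$), the terms $\eps(\|u_\zeta\|_{q_1}+\|u_\zeta\|_{q_2})$ and $|\Imag(\zeta)|^{1/2}(\|u_\zeta\|_{q_1}+\|u_\zeta\|_{q_2})$ can be moved to the left-hand side. This yields
\[
  \|u_\zeta\|_{q_1}+\|u_\zeta\|_{q_2}\les B_\zeta + \|J^\zeta\|_p.
\]
Finally, since $q\in[q_1,q_2]$ by assumption (A3), interpolation gives $\|u_\zeta\|_q\les \|u_\zeta\|_{q_1}+\|u_\zeta\|_{q_2}$, and the uniform positivity of $\eps,\mu$ from (A1) then converts this into an estimate for $\|E_\zeta\|_q+\|H_\zeta\|_q$. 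Adding the harmless term $\|J^\zeta\|_{\tilde p}$ (which can be seen to enter the Proposition~\ref{prop:ExtraCondition} bound via the analogous H\"older estimate with the $m_2$-part of the decomposition in (A3)) produces the stated inequality~\eqref{eq:ControlEHdelta}.

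The main obstacle I anticipate is the absorption step: the injectivity estimate is inherently nonlinear in $\|u_\zeta\|_{q_1}+\|u_\zeta\|_{q_2}$ (because of the square root), so one must be careful that the constant implicit in Proposition~\ref{prop:InjectivityEstimate} does not depend on $\zeta$ within $K$, which is exactly what the uniformity statement there guarantees. Once the linearization via Young's inequality is in place, the smallness of $|\Imag(\zeta)|$ is precisely what is needed to close the bootstrap; this is also why the corollary is only stated for $|\Imag(\zeta)|$ sufficiently small rather than for all $\zeta\in K\sm\R$.
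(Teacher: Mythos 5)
Your proposal is correct and follows essentially the same route as the paper: plug the representation formula~\eqref{eq:HelmholtzIntegrated} into the injectivity estimate of Proposition~\ref{prop:InjectivityEstimate}, bound the $\Imag(\langle u_\zeta,\cV(\zeta)u_\zeta\rangle)$ term via~\eqref{eq:extraConditionII}, then absorb (the paper leaves the Young-inequality/absorption step implicit, which you spell out correctly), and finally pass from $u_\zeta$ to $(E_\zeta,H_\zeta)$ via (A1).
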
 
\begin{proof}
  We define $u_\zeta:= (\tilde E_{\zeta},\tilde
  H_\zeta):=(\eps^{\frac{1}{2}}E_\zeta,\mu^{\frac{1}{2}}H_\zeta)$.
  By Lemma~\ref{lem:MaxwellHelmholtz} these functions solve the Helmholtz system~\eqref{eq:Helmholtz_compl}
  and hence satisfy the representation formula~\eqref{eq:HelmholtzIntegrated}.
  So Proposition~\ref{prop:InjectivityEstimate} and Proposition~\ref{prop:ExtraCondition} give
  \begin{align*}
     \|u_\zeta\|_{q_1} + \|u_\zeta\|_{q_2} 
    &\les  \|(I-\mathcal K(\zeta))u_\zeta\|_{q_1} + \|(I-\mathcal K(\zeta))u_\zeta\|_{q_2}
    + |\Imag(\langle u_\zeta, \cV(\zeta) u_\zeta \rangle)|^{1/2} \\
    &\les  \|R_0( \zeta^2\eps_\infty\mu_\infty) \big[\mathcal L_1(\zeta)\tilde J^\zeta 
      + \mathcal L_2\tilde J^\zeta\big]\|_{q_1} 
      + \|R_0( \zeta^2\eps_\infty\mu_\infty) \big[\mathcal L_1(\zeta)\tilde J^\zeta 
      + \mathcal L_2\tilde J^\zeta\big]\|_{q_2} \\
    &+   \sqrt{|\Imag(\zeta)|}  (\|u_\zeta\|_{q_1}+\|u_\zeta\|_{q_2})  + (\|J_\textup{e}^\zeta \|_p +
    \|J_\textup{m}^\zeta\|_p)^{\frac{1}{2}} (\|u_\zeta\|_{q_1}+\|u_\zeta\|_{q_2})^{\frac{1}{2}}. 
  \end{align*}
  This and $\|u_\zeta\|_q \les \|u_\zeta\|_{q_1} + \|u_\zeta\|_{q_2}$
  yields the corresponding bound for $u_\zeta$ provided that
  $|\Imag(\zeta)|$ is sufficiently small. Assumption~(A1) implies
  $\|E_\zeta\|_r  +  \|H_\zeta\|_r\les \|u_\zeta\|_r$ for $r\in\{q_1,q_2\}$ and~\eqref{eq:ControlEHdelta}
  follows.
\end{proof}

\subsection{Proof of the Limiting Absorption Principle}\label{subsec:LAP-Maxwell}
  
  We first prove the existence of the functions $(E_\zeta,H_\zeta)$ the bounds for which we provided above.
  We recall that it is defined as the unique solution in $H^1(\R^3;\C^6)\subset L^q(\R^3;\C^6)$ of
  the time-harmonic Maxwell system~\eqref{eq:Maxwell-harmonic}  with divergence-free
  currents $J_\textup{e}^\zeta,J_\textup{m}^\zeta$ lying in $L^p(\R^3;\C^3)\cap L^{\tilde p}(\R^3;\C^3)\cap L^2(\R^3;\C^3)$
  that converge to $J_\textup{e},J_\textup{m}$, respectively. (The reason for considering $J_\textup{e}^\zeta,J_\textup{m}^\zeta$ instead of
  $J_\textup{e},J_\textup{m}$ is because the existence of $L^q(\R^3;\C^6)$-solutions $(E_\zeta,H_\zeta)$ for the currents
  $J_\textup{e}, J_\textup{m}\in L^p(\R^3;\C^3)\cap L^{\tilde p}(\R^3;\C^3)$ is not clear.)  
  In the next proposition we first show that divergence-free currents $J_\textup{e}, J_\textup{m}\in
  L^p(\R^3; \C^3)\cap L^{\tilde p}(\R^3;\C^3)$ can be approximated by a sequence of divergence-free
  currents $J_\textup{e}^\zeta, J_\textup{m}^\zeta\in L^p(\R^3; \C^3)\cap L^{\tilde p}(\R^3;\C^3)\cap L^2(\R^3;\C^3)$.
  
  \begin{prop}\label{prop:Approximation}
      Let $p,\tilde p\in (1,\infty)$ and assume $J_\textup{e}, J_\textup{m}\in L^p(\R^3;\C^3)\cap L^{\tilde p}(\R^3;\C^3)$ to be divergence-free. 
      Then there are divergence-free vector fields $J_\textup{e}^\zeta, J_\textup{m}^\zeta\in
      L^p(\R^3;\C^3)\cap L^{\tilde p}(\R^3;\C^3)\cap L^2(\R^3;\C^3)$ satisfying $(J_\textup{e}^\zeta,J_\textup{m}^\zeta)\to
      (J_\textup{e},J_\textup{m})$ in $L^p(\R^3;\C^3)\cap L^{\tilde p}(\R^3;\C^3)$ as $\zeta\to
      \omega\in\R\sm\{0\}$.
  \end{prop}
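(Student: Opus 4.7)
The natural approach is to combine density of smooth compactly supported functions with the Leray projection $P$ onto divergence-free vector fields, which on $\R^3$ is given explicitly in terms of Riesz transforms as $(P\phi)_j = \phi_j + R_j R_k \phi_k$ (with summation convention), and hence by Calder\'on--Zygmund theory is a bounded linear operator on $L^r(\R^3;\C^3)$ for every $r\in (1,\infty)$. By construction $\diver(P\phi)=0$ in the distributional sense, and $P$ is a projection, so $P\phi=\phi$ whenever $\diver\phi=0$ in the sense of distributions.

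First, I would use density of $C_c^\infty(\R^3;\C^3)$ in $L^p(\R^3;\C^3)\cap L^{\tilde p}(\R^3;\C^3)$ (with the intersection norm) to pick sequences $\phi_n^e,\phi_n^m\in C_c^\infty(\R^3;\C^3)$ converging to $J_\textup{e},J_\textup{m}$ in $L^p(\R^3;\C^3)\cap L^{\tilde p}(\R^3;\C^3)$ as $n\to\infty$. I then set
\[
   J_\textup{e}^n := P\phi_n^e,\qquad J_\textup{m}^n := P\phi_n^m,
\]
which are divergence-free by construction. Since the bump functions $\phi_n^{e},\phi_n^{m}$ lie in $L^r(\R^3;\C^3)$ for every $r\in[1,\infty]$, the $L^r$-boundedness of $P$ for $r\in(1,\infty)$ yields $J_\textup{e}^n,J_\textup{m}^n\in L^p(\R^3;\C^3)\cap L^{\tilde p}(\R^3;\C^3)\cap L^2(\R^3;\C^3)$, as required.

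For convergence, since $J_\textup{e},J_\textup{m}$ are themselves divergence-free elements of $L^p(\R^3;\C^3)\cap L^{\tilde p}(\R^3;\C^3)$, we have $PJ_\textup{e}=J_\textup{e}$ and $PJ_\textup{m}=J_\textup{m}$ (using $p,\tilde p\in(1,\infty)$), hence
\[
   J_\textup{e}^n-J_\textup{e} = P(\phi_n^e-J_\textup{e}),\qquad J_\textup{m}^n-J_\textup{m} = P(\phi_n^m-J_\textup{m}),
\]
and the boundedness of $P$ on $L^p$ and $L^{\tilde p}$ gives $(J_\textup{e}^n,J_\textup{m}^n)\to (J_\textup{e},J_\textup{m})$ in $L^p(\R^3;\C^3)\cap L^{\tilde p}(\R^3;\C^3)$. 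To obtain the parametrization by $\zeta\to\omega$ required in the statement, it suffices to set $(J_\textup{e}^\zeta,J_\textup{m}^\zeta):=(J_\textup{e}^{n(\zeta)},J_\textup{m}^{n(\zeta)})$ for any choice of indexing $\zeta\mapsto n(\zeta)$ with $n(\zeta)\to\infty$ as $\zeta\to\omega$.

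The only mildly subtle point is the identity $PJ_\textup{e}=J_\textup{e}$: this needs $J_\textup{e}$ to lie in a function space on which $P$ acts as a bounded projection, which is guaranteed by $p,\tilde p\in(1,\infty)$ together with the distributional divergence-freeness of $J_\textup{e}$. The boundedness of the Riesz transforms on $L^r(\R^3)$ for $r\in(1,\infty)$ is classical, so no further machinery is needed.
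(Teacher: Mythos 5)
Your proposal is correct and takes essentially the same approach as the paper, which projects a density sequence (Schwartz functions $\mathcal{S}$ rather than $C_c^\infty$, an inessential difference) via the Leray/Helmholtz projection $\Pi f = f - \cF^{-1}(|\xi|^{-2}\xi\xi^T\hat f)$ and uses Riesz-transform boundedness on $L^p\cap L^{\tilde p}$. You merely spell out more explicitly the step $PJ_\textup{e}=J_\textup{e}$ for divergence-free $J_\textup{e}$, which the paper leaves implicit.
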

  \begin{proof}
    The map $\Pi\colon f\mapsto f- \cF^{-1}(|\xi|^{-2}\xi\xi^T\hat f)$ is a bounded linear operator from
    $L^p\cap L^{\tilde p}$ to the divergence-free vector fields in $L^p\cap
    L^{\tilde p}$. This is a consequence of the $L^p\cap L^{\tilde p}$-boundedness of
    Riesz transforms. So for any given $f\in L^p\cap L^{\tilde p}$ 
    we can choose $(f_n)\subset \mathcal S$ such that $f_n$ converges to $f$ in $L^p
    \cap L^{\tilde p}$. The sequence $(\Pi f_n)$ then has the desired properties.
  \end{proof}
  
  Next we show that for $J_\textup{e}^\zeta,J_\textup{m}^\zeta$ as in Proposition~\ref{prop:Approximation} there are
  uniquely determined solutions $(E_\zeta,H_\zeta)$ in $H^1(\R^3;\C^6)$.  In the proof  we will need the following result for $r=2$.
 
 \begin{prop} \label{prop:GradientEstimates}
    Assume (A1) and $\zeta\in\C,r\in (1,\infty)$. Then every solution of~\eqref{eq:Maxwell-harmonic} satisfies  
    $$
     \|\nabla E\|_r + \|\nabla H\|_r
      \les (1+|\zeta|)(\|E\|_r + \|H\|_r)+ \|J_\textup{e}\|_r+\|J_\textup{m}\|_r 
    $$
  \end{prop}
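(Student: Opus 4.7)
The strategy is to invoke the standard Calder\'on-Zygmund type inequality
\begin{equation}\label{eq:CZplan}
  \|\nabla u\|_r \les \|\curl u\|_r + \|\diver u\|_r,
  \qquad u \in L^r(\R^3;\C^3),\; 1<r<\infty,
\end{equation}
which follows from the pointwise Fourier identity $|\xi|^2\hat u = -i\xi\,\widehat{\diver u} + i\xi\times\widehat{\curl u}$. This expresses each $\partial_j u$ as a linear combination of compositions of two Riesz transforms applied to $\diver u$ and $\curl u$, and the Mikhlin multiplier theorem yields $L^r$-boundedness for $1<r<\infty$. Accepting~\eqref{eq:CZplan}, the proof reduces to bounding $\curl E,\curl H,\diver E,\diver H$ in $L^r$ by the right-hand side of the claimed estimate, and the whole argument is purely algebraic once this inequality is in place.

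\textbf{Curl bounds.} Rewriting~\eqref{eq:Maxwell-harmonic} yields directly
$$
  \curl E = J_\textup{m} - i\zeta\mu H,\qquad
  \curl H = J_\textup{e} + i\zeta\eps E.
$$
Using $\eps,\mu\in L^\infty$ from (A1) we immediately obtain
$$
  \|\curl E\|_r + \|\curl H\|_r
  \les \|J_\textup{e}\|_r + \|J_\textup{m}\|_r + |\zeta|(\|E\|_r + \|H\|_r).
$$

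\textbf{Divergence bounds.} Taking the divergence of each equation in~\eqref{eq:Maxwell-harmonic}, using $\diver\circ\curl = 0$ together with the (implicit, cf.\ the introduction) divergence-freeness of $J_\textup{e},J_\textup{m}$, gives $\diver(\eps E)=0$ and $\diver(\mu H)=0$. Expanding by the product rule and invoking (A1) (which ensures $\eps^{-1}\nabla\eps,\mu^{-1}\nabla\mu \in L^\infty(\R^3;\R^3)$) yields
$$
  \diver E = -\eps^{-1}\nabla\eps\cdot E,\qquad
  \diver H = -\mu^{-1}\nabla\mu\cdot H,
$$
so that $\|\diver E\|_r + \|\diver H\|_r \les \|E\|_r + \|H\|_r$. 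Combining the curl and divergence bounds via~\eqref{eq:CZplan} applied separately to $E$ and to $H$ then produces the claimed inequality.

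\textbf{Main obstacle.} The only non-routine point is to justify~\eqref{eq:CZplan} for general vector fields in $L^r$ rather than just for Schwartz functions, where the Fourier calculation is transparent. This is handled by a standard mollification argument: set $u_\eps := \rho_\eps * u$, observe that $\diver u_\eps \to \diver u$ and $\curl u_\eps \to \curl u$ in $L^r$, apply the Riesz-transform identity for the smooth $u_\eps$, and pass to the limit using the $L^r$-boundedness of the Riesz transforms. Alternatively, one can appeal directly to the $L^r$-Helmholtz--Hodge decomposition. With~\eqref{eq:CZplan} established, the remainder of the proof consists only of the elementary algebraic manipulations described above.
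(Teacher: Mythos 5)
Your proof is correct and follows essentially the same route as the paper: reduce to the inequality $\|\nabla u\|_r \les \|\curl u\|_r + \|\diver u\|_r$, bound the curls directly from~\eqref{eq:Maxwell-harmonic}, and bound the divergences via the divergence-freeness of $\eps E$ and $\mu H$. The only cosmetic difference is that the paper cites von Wahl~\cite{vonWahl_Estimating} for that inequality whereas you sketch the Riesz-transform argument yourself (and you also have the correct sign $\diver E = -\eps^{-1}\nabla\eps\cdot E$, the paper's sign being an inconsequential typo).
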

  \begin{proof}
    Since $D=\eps E$ and $B=\mu H$ are divergence-free, we have 
    $$
     \nabla \cdot E = \eps^{-1}\nabla \eps\cdot E,\qquad 
     \nabla \cdot H = \mu^{-1}\nabla \mu\cdot H.
    $$
    This and~\cite[Theorem~1.1]{vonWahl_Estimating} imply
  \begin{align*}
    \|\nabla E\|_r + \|\nabla H\|_r
    &\les \|\nabla\times E\|_r + \|\nabla\cdot E\|_r + \|\nabla\times H\|_r + \|\nabla\cdot H\|_r\\
    &\les \|-i\zeta \mu H + J_\textup{m}\|_r + \|\eps^{-1}\nabla \eps\cdot E\|_r 
     + \|i\zeta \eps E+ J_\textup{e}\|_r + \|\mu^{-1}\nabla \mu\cdot H\|_r \\
    &\les  (1+|\zeta|)(\|E\|_r + \|H\|_r) + \|J_\textup{e}\|_r+\|J_\textup{m}\|_r.
  \end{align*} 
  \end{proof}

  \begin{prop}\label{prop:EdeltaHdelta}
     Assume (A1). Then, for $\zeta\in\C$ with $\Real(\zeta),\Imag(\zeta)\neq 0$, there is a unique solution
     $(E_\zeta,H_\zeta)\in H^1(\R^3;\C^6)$ of~\eqref{eq:Maxwell-harmonic} for the divergence-free currents
     given by $J_\textup{e}^\zeta,J_\textup{m}^\zeta\in L^2(\R^3;\C^3)$.
  \end{prop}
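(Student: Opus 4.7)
The plan is to reduce existence and uniqueness for the first-order Maxwell system to an invertibility statement for a self-adjoint operator, and then upgrade the resulting $L^2$-solution to an $H^1$-solution by exploiting the divergence-freeness of the currents together with Proposition~\ref{prop:GradientEstimates}.

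Concretely, I would work on $\mathcal H:=L^2(\R^3;\C^3)\times L^2(\R^3;\C^3)$ endowed with the weighted inner product
\begin{equation*}
  \langle (E_1,H_1),(E_2,H_2)\rangle_{\eps,\mu}
  := \int_{\R^3} \bigl(\eps\, E_1\cdot\ov{E_2} + \mu\, H_1\cdot\ov{H_2}\bigr)\dx,
\end{equation*}
which is equivalent to the standard one by (A1), and introduce the Maxwell operator
\begin{equation*}
  \mathcal M(E,H) := \bigl(i\eps^{-1}\nabla\times H,\, -i\mu^{-1}\nabla\times E\bigr),\qquad
  D(\mathcal M) := H^1(\curl;\R^3)\times H^1(\curl;\R^3).
\end{equation*}
A routine integration by parts shows that $\mathcal M$ is symmetric on $\mathcal H$ with this weighted inner product, and self-adjointness follows from the classical identification of the adjoint's domain via a cut-off/mollification argument (essentially, the free Maxwell operator is self-adjoint on $H(\curl)\times H(\curl)$ by Fourier analysis, and the multiplicative factors $\eps^{-1},\mu^{-1}$ are bounded perturbations in the sense of Kato--Rellich by~(A1)). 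After dividing the two equations of~\eqref{eq:Maxwell-harmonic} by $i\eps$ and $i\mu$ respectively, the system reads $(\mathcal M + \zeta I)(E,H) = (i\eps^{-1}J_\textup{e}^\zeta,\, -i\mu^{-1}J_\textup{m}^\zeta)$, and since $\Imag(\zeta)\neq 0$ the point $-\zeta$ lies in the resolvent set of $\mathcal M$, producing a unique $(E_\zeta,H_\zeta)\in D(\mathcal M)$.

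To upgrade from $L^2$ with $L^2$-curl to full $H^1$-regularity I would use that $J_\textup{e}^\zeta,J_\textup{m}^\zeta$ are divergence-free: taking the divergence of~\eqref{eq:Maxwell-harmonic} yields $\nabla\cdot(\eps E_\zeta)=\nabla\cdot(\mu H_\zeta)=0$, and (A1) then gives
\begin{equation*}
  \nabla\cdot E_\zeta = -\eps^{-1}\nabla\eps\cdot E_\zeta \in L^2(\R^3),\qquad
  \nabla\cdot H_\zeta = -\mu^{-1}\nabla\mu\cdot H_\zeta \in L^2(\R^3).
\end{equation*}
Combined with $\nabla\times E_\zeta,\nabla\times H_\zeta\in L^2(\R^3;\C^3)$ (from $(E_\zeta,H_\zeta)\in D(\mathcal M)$), Proposition~\ref{prop:GradientEstimates} with $r=2$ (or equivalently the underlying div-curl estimate of von Wahl) gives $(E_\zeta,H_\zeta)\in H^1(\R^3;\C^6)$. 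Uniqueness is now immediate: any $H^1$-solution of the homogeneous system belongs to $D(\mathcal M)\cap \ker(\mathcal M+\zeta I)=\{0\}$ since $-\zeta\notin\sigma(\mathcal M)\subset\R$.

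The main obstacle I anticipate is the rigorous verification of the self-adjointness of $\mathcal M$, in particular the characterization $D(\mathcal M^*)=D(\mathcal M)$, which is complicated by the infinite-dimensional kernel of $\mathcal M$ (pairs of curl-free $L^2$-fields)\,---\,this contributes $0\in\sigma(\mathcal M)$, which is harmless here since $\Imag(\zeta)\neq 0$. A less abstract alternative that bypasses this subtlety, and incidentally explains the assumption $\Real(\zeta)\neq 0$, would be to eliminate $H$ and apply Lax--Milgram to the second-order equation
\begin{equation*}
  \nabla\times(\mu^{-1}\nabla\times E) - \zeta^2\eps E = \nabla\times(\mu^{-1}J_\textup{m}^\zeta) - i\zeta J_\textup{e}^\zeta
\end{equation*}
on $H^1(\curl;\R^3)$: the condition $\Imag(\zeta^2)=2\Real(\zeta)\Imag(\zeta)\neq 0$ allows one to rotate the associated sesquilinear form $a(E,\phi)=\int\mu^{-1}(\nabla\times E)\cdot(\nabla\times\ov\phi)\dx - \zeta^2\int\eps E\cdot\ov\phi\dx$ by a suitable phase $e^{i\theta}$ so that both its real and imaginary parts are coercive, whereupon $H$ is reconstructed algebraically from the second Maxwell equation and the $H^1$-regularity is obtained as above.
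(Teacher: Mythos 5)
Your main route --- self-adjointness of the weighted Maxwell operator $\mathcal M$ on the $\langle\cdot,\cdot\rangle_{\eps,\mu}$-space, so that $\Imag\zeta\neq 0$ places $-\zeta$ in the resolvent set and yields a unique $(E_\zeta,H_\zeta)\in H(\curl;\R^3)^2$, followed by the $H^1$-upgrade via $\nabla\cdot E_\zeta=-\eps^{-1}\nabla\eps\cdot E_\zeta$ (and the analogue for $H_\zeta$) together with the div--curl estimate --- is precisely what the paper does: it cites~\cite[Section 7.4]{FabMor_Electromagnetism} for the self-adjoint Maxwell operator framework and then applies Proposition~\ref{prop:GradientEstimates} with $r=2$. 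One imprecision in your write-up is worth fixing: writing $\mathcal M=B\mathcal M_0$ with $B=\mathrm{diag}(\eps^{-1}I_3,\mu^{-1}I_3)$ and the free Maxwell operator $\mathcal M_0(E,H)=(i\nabla\times H,-i\nabla\times E)$ shows that $\mathcal M$ is a \emph{multiplicative} modification of $\mathcal M_0$, not an additive bounded perturbation, so Kato--Rellich does not apply as you state. The correct justification is the change-of-metric argument: $B$ is a bounded, boundedly invertible, positive self-adjoint multiplication on $L^2\times L^2$, $\mathcal M_0$ is self-adjoint there with domain $H(\curl)\times H(\curl)$, and hence $\mathcal M=B\mathcal M_0$ with the same domain is self-adjoint on the weighted space whose inner product is $\langle u,B^{-1}v\rangle_{L^2}$, which is exactly your $\langle\cdot,\cdot\rangle_{\eps,\mu}$. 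Your Lax--Milgram alternative on the curl-curl equation is a genuinely different, self-contained route and has the merit of showing where $\Real(\zeta)\neq 0$ could matter: it enters via $\Imag(\zeta^2)=2\Real(\zeta)\Imag(\zeta)\neq 0$, which makes the form coercive after rotation. In the self-adjoint route only $\Imag\zeta\neq 0$ is used; the hypothesis $\Real(\zeta)\neq 0$ is stated here because it is required downstream (e.g.\ in Propositions~\ref{prop:injectivityCondition} and~\ref{prop:InjectivityEstimate}), not for this existence statement itself.
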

  \begin{proof}
    The existence and uniqueness of such a solution $(E_\zeta,H_\zeta)\in 
    H(\curl;\R^3)\times
    H(\curl;\R^3)$ can be proved as in~\cite[Section 7.4]{FabMor_Electromagnetism}.  
    From $\eps,\mu\in W^{1,\infty}(\R^3)$ by (A1) and Proposition~\ref{prop:GradientEstimates} for $r=2$ we
    obtain $(E_\zeta,H_\zeta)\in H^1(\R^3;\C^6)$.
  \end{proof}
  
  The preceding propositions ensure that the sequences of  solutions $(E_\zeta,H_\zeta)$
  we were speaking of really exist in the space $H^1(\R^3;\C^6)$ and  in particular in $L^q(\R^3;\C^6)$ for
  all $q\in (3,6)$  by Sobolev's Embedding Theorem. In
  Corollary~\ref{cor:Maxwellbounds} we showed that $(E_\zeta,H_\zeta)$ remain bounded once we have bounds for suitable Lebesgue-norms of  
   $\tilde J^\zeta$ and $R_0(\zeta^2\eps_\infty\mu_\infty)\big[\mathcal L_1(\zeta)\tilde J^\zeta
    + \mathcal L_2\tilde J^\zeta \big]$ 
 which are independent of $\Imag(\zeta)$. As mentioned earlier, this can be achieved rather easily
 with the aid of Theorem~\ref{thm:Gutierrez} and a suitable modification of it when first order
 derivates are involved, see Theorem~\ref{thm:resolvent-der} in the Appendix.
 
 \begin{prop} \label{prop:EstimateJ}
   Assume (A1) and let $K\subset \{\zeta\in\C:\Real(\zeta)\neq 0\}$ be compact. Then, for
  $\zeta\in K\sm\R$ and $\tilde J^\zeta:=(\mu^{\frac{1}{2}}J_\textup{e}^\zeta,\eps^{\frac{1}{2}}J_\textup{m}^\zeta)$  
   as above, we have   
   $$
    \|R_0( \zeta^2\eps_\infty\mu_\infty)\big[\mathcal L_1(\zeta)\tilde
    J+\mathcal L_2\tilde J^\zeta \big]\|_{q_1}
    +\|R_0( \zeta^2\eps_\infty\mu_\infty)\big[\mathcal L_1(\zeta)\tilde
    J+\mathcal L_2\tilde J^\zeta \big]\|_{q_2}  
    \les \|J^\zeta\|_p + \|J^\zeta\|_{\tilde p}.
  $$
 \end{prop}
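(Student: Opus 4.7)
The plan is to split the expression into its two natural contributions,
$R_0(\zeta^2\eps_\infty\mu_\infty)[\cL_1(\zeta)\tilde J^\zeta]$ and $R_0(\zeta^2\eps_\infty\mu_\infty)[\cL_2\tilde J^\zeta]$,
and to bound each in $L^{q_1}$ and $L^{q_2}$ separately. The first is a zeroth-order contribution that is directly amenable to Corollary~\ref{cor:res-free-matrix}; the second involves the first-order operator $\nabla\times$ coming from $\cL_2$ and must be handled by the curl/derivative analogue of Corollary~\ref{cor:res-free-matrix}, namely Theorem~\ref{thm:resolvent-der} from the Appendix. The two differing ranges $\tfrac{1}{p}-\tfrac{1}{q}\in[\tfrac{1}{2},\tfrac{2}{3}]$ and $\tfrac{1}{\tilde p}-\tfrac{1}{q}\in[0,\tfrac{1}{3}]$ appearing in~\eqref{eq:LAP_Maxwell_conditions} reflect precisely this zeroth- versus first-order distinction.

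For the $\cL_1$-term, the first step is to observe that by (A1) the entries of $\cL_1(\zeta)$ -- namely $(\eps\mu)^{1/2}$, $\nabla(\log\eps)$ and $\nabla(\log\mu)$ -- are uniformly bounded in $L^\infty(\R^3)$ with bounds depending only on $K$. Hence $\|\cL_1(\zeta)\tilde J^\zeta\|_p\lesssim \|J^\zeta\|_p$ by (A1). Since~\eqref{eq:LAP_Maxwell_conditions} forces $\tfrac{1}{p}-\tfrac{1}{q_i}\in[\tfrac{1}{2},\tfrac{2}{3}]=[\tfrac{2}{n+1},\tfrac{2}{n}]$ for $n=3$, the pairs $(p,q_i)$ lie in the Kenig--Ruiz--Sogge / Guti\'errez range~\eqref{eq:indices-Gutierrez}. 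Applying Corollary~\ref{cor:res-free-matrix} and using that $\zeta^2\eps_\infty\mu_\infty$ stays in a compact subset of $\C\setminus\{0\}$ (because $\Real(\zeta)\neq 0$ on $K$) produces the desired bound $\|R_0(\zeta^2\eps_\infty\mu_\infty)\cL_1(\zeta)\tilde J^\zeta\|_{q_i}\lesssim \|J^\zeta\|_p$, uniformly for $\zeta\in K\setminus\R$.

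For the $\cL_2$-term one cannot estimate $\cL_2\tilde J^\zeta=(-\nabla\times\tilde J_\textup{m}^\zeta,-\nabla\times\tilde J_\textup{e}^\zeta)$ in any Lebesgue norm without losing regularity, so the derivative must be moved onto the resolvent. I would appeal to Theorem~\ref{thm:resolvent-der} in the Appendix, which is the analogue of Corollary~\ref{cor:res-free-matrix} for the compositions $R_0(z)\partial_j$. The admissible range of exponents for that theorem contains $\tfrac{1}{\tilde p}-\tfrac{1}{q_i}\in[0,\tfrac{1}{3}]$, precisely the condition supplied by~\eqref{eq:LAP_Maxwell_conditions}, and it yields $\|R_0(\zeta^2\eps_\infty\mu_\infty)\cL_2\tilde J^\zeta\|_{q_i}\lesssim \|\tilde J^\zeta\|_{\tilde p}\lesssim \|J^\zeta\|_{\tilde p}$, once again uniformly on $K\setminus\R$.

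Summing the two contributions over $i=1,2$ yields the claim. The single genuinely new ingredient is the Appendix estimate Theorem~\ref{thm:resolvent-der} for the resolvent composed with one derivative; the remaining effort is purely bookkeeping, namely checking that the four exponent pairs $(p,q_1),(p,q_2),(\tilde p,q_1),(\tilde p,q_2)$ simultaneously meet the hypotheses of the two Guti\'errez-type estimates used, which is transparently guaranteed by the admissibility conditions~\eqref{eq:LAP_Maxwell_conditions}. The main subtle point is ensuring uniformity in $\zeta$ as $\Imag(\zeta)\to 0$, but this is precisely the content of the limiting absorption principle encoded in Theorem~\ref{thm:Gutierrez} and its derivative counterpart.
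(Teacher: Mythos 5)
Your proposal follows the paper's proof exactly: split off the $\cL_1$-contribution, use $\|\cL_1(\zeta)\|_\infty\les 1$ together with the Kenig--Ruiz--Sogge/Guti\'errez bounds from Corollary~\ref{cor:res-free-matrix}, and handle the $\cL_2$-contribution (one curl, hence one derivative) via the appendix estimate Theorem~\ref{thm:resolvent-der}. One small imprecision: you invoke Theorem~\ref{thm:resolvent-der} as if it yielded a bound $\les\|\tilde J^\zeta\|_{\tilde p}$ alone using only the $(\tilde p,q_i)$-conditions, but in fact that theorem requires \emph{both} exponent conditions simultaneously (the $(p,q)$-Guti\'errez range for the low-frequency part of the multiplier and the Bessel-potential range for $(\tilde p,q)$ for the high-frequency part) and produces the bound $\les\|\tilde J^\zeta\|_p+\|\tilde J^\zeta\|_{\tilde p}$. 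This is harmless here since all four exponent pairs are admissible and the extra $\|J^\zeta\|_p$-term is already present in the target estimate, but the structure of Theorem~\ref{thm:resolvent-der} should be quoted accurately.
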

 \begin{proof}
   To bound the term involving $\cL_1$ we use Theorem~\ref{thm:Gutierrez}. Since $\|\mathcal
   L_1(\zeta)\|_\infty\les 1+|\zeta|\les 1$ by the definition of $\cL_1$ from~\eqref{eq:cond}
   and assumption (A1) we get
   \begin{align*}
     \|R_0( \zeta^2\eps_\infty\mu_\infty) \big[\mathcal L_1(\zeta)\tilde J^\zeta  \big]\|_{q_1}  
     \les  \|\mathcal L_1(\zeta)\tilde J^\zeta\|_p 
     \les   \|\tilde J^\zeta\|_p 
     \les \|J^\zeta\|_p.
   \end{align*}
    The estimate for the term involving $\cL_2$ corresponds to the special case $n=3,m=6$ in
    Theorem~\ref{thm:resolvent-der}:
    \begin{align*}
     \|R_0( \zeta^2\eps_\infty\mu_\infty) \big[\mathcal L_2\tilde J^\zeta \big]\|_{q_1} 
     &\les   \|\tilde J^\zeta\|_p  + \|\tilde J^\zeta\|_{\tilde p}
     \les \|J^\zeta\|_p + \|J^\zeta\|_{\tilde p}.
   \end{align*}
   Since the same holds for $q_1$ replaced by $q_2$, this proves the claim.
 \end{proof}

  \medskip
  
  Now we are in the position to prove the Limiting Absorption Principle for time-harmonic Maxwell's
  equations~\eqref{eq:Maxwell-harmonic}.
  
  \medskip 
  
  \noindent\textbf{Proof of Theorem~\ref{thm:main}:}\; In order to prove Theorem~\ref{thm:main}, it suffices
  to combine the auxiliary results that we established above. So assume (A1),(A2),(A3) and let $p,q$ 
  and $J_\textup{e}, J_\textup{m}\in L^p\cap  L^{\tilde p}$ be  given  as in the theorem.
  We prove the existence of the solutions $(E_\omega^\pm,H_\omega^\pm)$ with the desired properties by
  proving the convergence of the solutions $(E_\zeta,H_\zeta)$ as outlined in part~(i) of the
  theorem. To reduce the notation we only consider the limit $\zeta\to \omega+i0$.
  
  \medskip
  
  \textit{Proof of (i):}\; For $\zeta\in\C\sm\R$ with $\Imag(\zeta)>0,\Real(\zeta)\neq 0$ let 
  $J_\textup{e}^\zeta,J_\textup{m}^\zeta\in L^p\cap
  L^{\tilde p}\cap L^2$ be the divergence-free approximating sequence whose existence
  is ensured by Proposition~\ref{prop:Approximation}.
  Let then $(E_\zeta,H_\zeta)$ denote the unique $H^1$-solutions of the corresponding
  inhomogeneous time-harmonic Maxwell system~\eqref{eq:Maxwell-harmonic} from Proposition~\ref{prop:EdeltaHdelta}. 
  Corollary~\ref{cor:Maxwellbounds} yields for small $|\Imag(\zeta)|$
  \begin{align*}
    \|E_\zeta\|_q +  \|H_\zeta\|_{q}
   &\les\|R_0(\zeta^2\eps_\infty\mu_\infty)\big[\mathcal L_1(\zeta)\tilde J^\zeta
    + \mathcal L_2\tilde J^\zeta \big]\|_{q_1}
    + \|R_0(\zeta^2\eps_\infty\mu_\infty)\big[\mathcal L_1(\zeta)\tilde J^\zeta
    + \mathcal L_2\tilde J^\zeta \big]\|_{q_2}
    + \|J^\zeta\|_p
    + \|J^\zeta\|_{\tilde p}    
  \end{align*}
  where $\tilde J^\zeta :=(\mu^{\frac{1}{2}}J_\textup{e}^\zeta,\eps^{\frac{1}{2}}J_\textup{m}^\zeta)$. So
  Proposition~\ref{prop:EstimateJ} implies
  $$
    \|E_\zeta\|_q  + \|H_\zeta\|_{q} 
    \les \|J^\zeta\|_p + \|J^\zeta\|_{\tilde p}
    = \|J\|_p + \|J\|_{\tilde p} + o(1) \quad \text{as }\zeta\to \omega+i0,
  $$ 
  which proves that the sequence of approximate solutions $(E_\zeta,H_\zeta)$ is bounded in
  $L^q$. So a subsequence of $(u_\zeta)$ defined via $u_\zeta:= (\tilde E_\zeta,\tilde
  H_\zeta):=(\eps^\frac{1}{2} E_\zeta,\mu^\frac{1}{2}H_\zeta)$ converges weakly to some 
  $u_\omega^+ =:(\tilde E_\omega^+,\tilde H_\omega^+)$ in $L^q$. Defining 
  $(E_\omega^+,H_\omega^+):= (\eps^{-\frac{1}{2}} \tilde E_\omega^+,\mu^{-\frac{1}{2}}\tilde H_\omega^+)$  we thus
  obtain a weak solution of the time-harmonic Maxwell system~\eqref{eq:Maxwell-harmonic} (for $\zeta=\omega$)
  that satisfies 
  $$ 
    \|E_\omega^+\|_q  + \|H_\omega^+\|_{q} 
    \les  \|\tilde E_\omega^+\|_q  + \|\tilde H_\omega^+\|_{q}
    \les \|J\|_p + \|J\|_{\tilde p}.
  $$
  In the first estimate assumption (A1) is used. This proves the existence of the solution
  $(E_\omega^+,H_\omega^+)$ along with the corresponding norm estimate. To conclude the proof of (i) we need
  to show that for any given approximations $J_\textup{e}^\zeta,J_\textup{m}^\zeta$ as above the full sequence
  $(u_\zeta)$ converges  to $u_\omega^+$.
    
  \medskip
    
  So let $(\zeta_j),(\tilde \zeta_j)$ sequences converging to $\omega+i0$  and
  let $J^{1,\zeta_j},J^{2,\tilde\zeta_j}$ be divergence-free currents converging to $J$. Let
  $u_\omega^1,u_\omega^2\in L^q$ denote the corresponding weak limits, i.e., $u_{\zeta_j}\wto
  u_\omega^1,u_{\tilde\zeta_j}\wto u_\omega^2$. We need to show $u_\omega^1=u_\omega^2$.
  From~\eqref{eq:HelmholtzIntegrated} we infer $(I-\mathcal K(\zeta_j))u_{\zeta_j} =
  f^1_{\zeta_j}$ and $(I-\mathcal K(\tilde\zeta_j))u_{\tilde\zeta_j} = f^2_{\tilde\zeta_j}$ where
  \begin{align*}
    f_\zeta^1 
 	&:= R_0\big(\zeta^2\eps_\infty\mu_\infty\big)\big[\cL_1(\zeta)\tilde
 	J^{1,\zeta}+\cL_2\tilde J^{1,\zeta}\big], \\
 	f_{\zeta}^2 
 	&:= R_0\big(\zeta^2\eps_\infty\mu_\infty\big)\big[\cL_1(\zeta)\tilde
 	J^{2,\zeta}+\cL_2\tilde J^{2,\zeta}\big], \\
  	f_{\omega}^+  
  	&:= R_0\big((\omega+i0)^2\eps_\infty\mu_\infty\big)\big[\cL_1(\omega)\tilde
  	J+\cL_2\tilde J\big]. 
  \end{align*}
  Using first Proposition~\ref{prop:EstimateJ} and $\tilde J^{1,\zeta_j}\to \tilde J$, then
  the uniform boundedness of the resolvents $R_0(\zeta_j)$ and $\mathcal L_1(\zeta_j)\to\mathcal L_1(\omega)$
  in $L^\infty$, and finally the pointwise convergence $R_0(\zeta_j^2\eps_\infty\mu_\infty)\to
  R_0((\omega+i0)^2\eps_\infty\mu_\infty)$ we infer
  \begin{align*}
    f_{\zeta_j}^1 
 	&= R_0\big(\zeta_j^2\eps_\infty\mu_\infty\big)\big[\cL_1(\zeta_j)\tilde J^{1,\zeta_j}+\cL_2\tilde
 	J^{1,\zeta_j} \big] \\
 	&= R_0\big(\zeta_j^2\eps_\infty\mu_\infty\big)\big[\cL_1(\zeta_j)\tilde J +\cL_2\tilde	J \big] + o(1) \\
 	&= R_0\big(\zeta_j^2\eps_\infty\mu_\infty\big)\big[\cL_1(\omega)\tilde J+\cL_2\tilde J\big] + o(1) \\
 	&= R_0\big((\omega+i0)^2\eps_\infty\mu_\infty\big)\big[\cL_1(\omega)\tilde J+\cL_2\tilde J\big] + o(1) \\ 
 	&= f_\omega^+ + o(1). 
  \end{align*}
  Since the same argument applies to $f_{\tilde\zeta_j}^2$, we get $f_{\zeta_j}^1-f^2_{\tilde\zeta_j}\to
  f_\omega^+-f_\omega^+ = 0$. Using the continuity and compactness properties of $\mathcal K$ from
  Corollary~\ref{cor:step2} we infer that $w:= u_\omega^1-u_\omega^2$ satisfies 
  \begin{align*}
    w &= u_{\zeta_j}-u_{\tilde\zeta_j} + o_w(1) \\
     &= \mathcal K(\zeta_j)u_{\zeta_j} -\mathcal K(\zeta_j)u_{\tilde\zeta_j} 
       + f^1_{\zeta_j} - f^2_{\tilde\zeta_j}  + o_w(1) \\
     &= \mathcal K(\omega+i0)u_\omega^1 -\mathcal K(\omega+i0)u_\omega^2       + o_w(1) \\
     &= \mathcal K(\omega+i0)w    + o_w(1).  
  \end{align*} 
  Here, $o_w(1)$ stands for a null sequence in the weak topology in $L^q$.
  The function  $(\tilde w_\textup{e},\tilde w_\textup{m})$ given by 
  $(\eps^{1/2}\tilde w_\textup{e},\mu^{1/2}\tilde w_\textup{m}):=  w$ is a weak solution of the
  homogeneous time-harmonic Maxwell system~\eqref{eq:Maxwell-harmonic} for $\zeta=\omega$  and $\tilde
  J=0$. Repeating the computations in Proposition~\ref{prop:ExtraCondition} in the limiting case $\Imag(\zeta)=0$ one finds
  \begin{align*}
     2\omega\int_{\R^3} v\cdot \Real(w_m\times \ov{w_e}) \dx = 0.
  \end{align*}
  So Proposition~\ref{prop:injectivityCondition} gives $w=0$. This proves that all possible weak
  limits coincide. Hence, the  standard subsequence-of-subsequence argument ensures that all approximating
  sequences weakly converge to the same limit as $\zeta\to \omega+i0$.
    
  \medskip
  
  To finish the proof of (i) it remains to show that this convergence also holds in the strong sense
  and hence, by elliptic regularity theory, in $H^1_{\loc}(\R^3;\C^6)$. To
  this end we recall from above $(I-\mathcal K(\zeta_j))u_{\zeta_j}=f_{\zeta_j}$. From the definition of
  $\mathcal K$ and the second part of Proposition~\ref{prop:step2-gen} we get $\mathcal K(\zeta_j)u_{\zeta_j}\to \mathcal
  K(\omega+i0)u_{\omega}^+$ in $L^q$. Moreover, we showed above  $f_{\zeta_j}\to f_\omega^+$ in
  $L^q$ as $j\to\infty$. Hence, we conclude that $(u_{\zeta_j})$ converges in $L^q$. Since the limit necessarily
  coincides with the weak limit, we finally obtain $u_{\zeta_j}\to u_\omega^+$ as $\zeta_j\to \omega+i0$. This
  proves (i) as well as 
  \begin{equation}\label{eq:EquationForSolution}
     (I-\mathcal K(\omega\pm i0))u_\omega^\pm 
     = R_0\big((\omega\pm i0)^2\eps_\infty\mu_\infty\big)\big[\cL_1(\omega)\tilde J+\cL_2\tilde J\big].
  \end{equation}
  In particular, $u_\omega^+$ solves the Helmholtz system mentioned in the theorem.
  
  \medskip
  
  We finally prove part~(iii) of the theorem, so we assume that the divergence-free currents
  $(J_\textup{e},J_\textup{m})$ lie in the smaller space $L^p \cap L^q \subset L^p \cap L^{\tilde p}$ (because
  $p<\tilde p<q$).  From above we get a solution $(E,H)\in L^q$ of~\eqref{eq:Maxwell-harmonic} which,
  according to Proposition~\ref{prop:GradientEstimates}, satisfies  
  \begin{align*}
     \|\nabla E\|_q + \|\nabla H\|_q
     \les (1+|\zeta|) (\|E\|_q + \|H\|_q) + \|J_\textup{e}\|_q+\|J_\textup{m}\|_q
     \les \|J\|_p  +  \|J\|_q. 
  \end{align*}
  Hence we conclude $E,H\in W^{1,q}(\R^3;\C^3)$ as claimed.
  \qed

\section*{Appendix A: Proof of Lemma~\ref{lem:MaxwellHelmholtz}}
 This appendix is devoted to the proof of Lemma~\ref{lem:MaxwellHelmholtz} and, in particular, to showing the validity of~\eqref{eq:Helmholtz_compl}.
For notational simplicity we verify~\eqref{eq:Helmholtz_compl} in the pointwise sense assuming that
  classical derivatives exist. This carries over to weak solutions by moving first order derivatives to the
  test functions. So let $(E,H)$ denote a weak solution of~\eqref{eq:Maxwell-harmonic} as assumed.
  Then  $(\tilde E,\tilde H) =(\eps^{-\frac{1}{2}}D,\mu^{-\frac{1}{2}}B)$ where  $D,B$ are
  divergence-free vector fields. The latter follows from the fact that $J_e,J_m$ are divergence-free.  
  So we have  $\nabla \times \nabla \times D=\nabla (\nabla \cdot D)-\Delta D=-\Delta D$ and obtain
   \begin{align} \label{eq:LaplacianTildeE}
     \begin{aligned}
     \Delta\tilde E
     &= \Delta (\eps^{-\frac{1}{2}}) D 
       + 2 [\nabla(\eps^{-\frac{1}{2}})\cdot\nabla ] D
       + \eps^{-\frac{1}{2}} \Delta D \\
     &= \left(\frac{3}{4}\eps^{-2}|\nabla\eps|^2 - \frac{1}{2}\eps^{-1}\Delta\eps \right)
       \tilde E  -  \eps^{-\frac{3}{2}} [\nabla \eps\cdot\nabla ] D
       - \eps^{-\frac{1}{2}} \nabla\times\nabla\times D.
      \end{aligned}      
   \end{align}
   The second order term can be simplified with the aid of~\eqref{eq:Maxwell-harmonic}. The vector calculus
   identities
   \begin{align*}
	  \nabla \times (\psi A) &= \nabla \psi \times A+\psi (\nabla \times A),\\
	  \nabla \times (A\times C)&=A(\nabla \cdot C) - C(\nabla \cdot A) + (C\cdot \nabla)A - (A\cdot \nabla)C
   \end{align*} 
   for scalar fields $\psi$ and vector fields $A,C$ lead to  
   \begin{align*}
     \nabla\times\nabla\times D
     &= \nabla\times \nabla\times (\eps E) \\
     &= \nabla\times \left( \nabla\eps \times E + \eps (\nabla\times E)\right) \\
     &= \nabla\times \left( \nabla\eps \times E + \eps \left[-i\zeta\mu
     H + J_m \right] \right)
     \\
     &= \nabla\times \left( \nabla\eps \times E\right)   
       - i\zeta \nabla(\eps\mu) \times H 
       - i\zeta \eps\mu (\nabla\times H)
       + \nabla\times (\eps J_m)   \\
     &= \nabla \eps (\nabla\cdot E) - (\Delta \eps) E 
     + (E\cdot\nabla ) \nabla \eps - (\nabla\eps\cdot \nabla) E \\
     &-i\zeta \nabla(\eps\mu) \times H 
       - i\zeta \eps\mu (i\zeta \eps E+J_e)
       + \nabla\times (\eps^{\frac{1}{2}}\tilde J_m).     
   \end{align*}
   To simplify these terms we use that $D=\eps E$ is divergence-free and thus $\nabla\cdot E =
   -\eps^{-1}\nabla\eps\cdot E$. Moreover,
   $$
      (\nabla\eps\cdot\nabla)E
     =   (\nabla\eps\cdot\nabla) (\eps^{-1}D)
     = -\eps^{-2}|\nabla\eps|^2 D + \eps^{-1}(\nabla\eps\cdot\nabla)D.
   $$ 
   This implies
   \begin{align*}
     -\eps^{-\frac{1}{2}}\nabla\times\nabla\times D
     &= -\eps^{-\frac{1}{2}} \cdot \left[ - \eps^{-1} 
     \nabla \eps (\nabla \eps\cdot E) -  (\Delta \eps) E 
     + (E\cdot\nabla ) \nabla \eps 
     + \eps^{-2}|\nabla\eps|^2 D - \eps^{-1}(\nabla\eps\cdot\nabla)D
     \right]   \\
     &+i\zeta (\eps\mu)^{-\frac{1}{2}} \nabla(\eps\mu) \times \tilde H 
       -  \zeta^2 \eps\mu \tilde E  
       + i\zeta (\eps\mu)^{\frac{1}{2}} \tilde J_e 
       - \eps^{-\frac{1}{2}} \nabla\times  (\eps^{\frac{1}{2}}\tilde J_m) \\
     &=   \eps^{-2} \nabla \eps (\nabla \eps)^T \tilde E 
   	+  \eps^{-1}(\Delta \eps) \tilde E 
     - \eps^{-1}(\tilde E\cdot\nabla ) \nabla \eps   
     - \eps^{-2} |\nabla\eps|^2 \tilde E 
     + \eps^{-\frac{3}{2}} (\nabla\eps\cdot \nabla) D     \\
     &+ i\zeta (\eps\mu)^{-\frac{1}{2}} \nabla(\eps\mu) \times \tilde H 
       -  \zeta^2 \eps\mu \tilde E  
       + i\zeta (\eps\mu)^{\frac{1}{2}} \tilde J_e 
       -   \nabla\times \tilde J_m 
       - \frac{1}{2} \eps^{-1} \nabla\eps \times \tilde J_m. 
   \end{align*}
   Combining this formula with~\eqref{eq:LaplacianTildeE} we find that the first order terms (involving $D$)
   cancel and
   \begin{align*}
     \Delta\tilde E
     &= \left(\frac{3}{4}\eps^{-2}|\nabla\eps|^2 - \frac{1}{2}\eps^{-1}\Delta\eps \right)
       \tilde E  + \eps^{-2} \nabla \eps (\nabla \eps)^T \tilde E 
     +  \eps^{-1} \Delta \eps \tilde E 
      - \eps^{-1} (\tilde E\cdot\nabla ) \nabla \eps   
     -  \eps^{-2} |\nabla\eps|^2 \tilde E \\
      &+ i\zeta (\eps\mu)^{-\frac{1}{2}} \nabla(\eps\mu) \times \tilde H 
       -  \zeta^2 \eps\mu \tilde E  
       + i\zeta (\eps\mu)^{\frac{1}{2}} \tilde J_e 
       -   \nabla\times  \tilde J_m 
       - \frac{1}{2} \eps^{-1} \nabla\eps \times \tilde J_m \\
     &= \left(-\frac{1}{4}\eps^{-2}|\nabla\eps|^2 + \frac{1}{2}\eps^{-1}\Delta\eps 
     + \eps^{-2} \nabla \eps (\nabla \eps)^T
     -  \zeta^2 \eps\mu
     - \eps^{-1} \nabla \nabla^T \eps
     \right)  \tilde E \\  
       &+ i\zeta (\eps\mu)^{-\frac{1}{2}} \nabla(\eps\mu) \times \tilde H 
       + i\zeta (\eps\mu)^{\frac{1}{2}} \tilde J_e 
       -   \nabla\times  \tilde J_m 
       - \frac{1}{2} \eps^{-1} \nabla\eps \times \tilde J_m \\
     &= \Big[
     \eps^{-\frac{1}{2}} \Delta(\eps^{\frac{1}{2}})
 	-\nabla \nabla^T(\log\varepsilon)   
 	+ \zeta^2(\varepsilon_\infty \mu_\infty-\varepsilon \mu)\Big] \tilde E 
 	- \zeta^2\eps_\infty\mu_\infty \tilde E\\
 	 &+ 2i\zeta  \nabla((\varepsilon \mu)^{\frac{1}{2}}) \times \tilde H 
       + i\zeta (\eps\mu)^{\frac{1}{2}} \tilde J_e 
       -   \nabla\times  \tilde J_m 
       - \frac{1}{2} \nabla (\log\eps)  \times \tilde J_m.  
   \end{align*}
   This corresponds to the first line in~\eqref{eq:Helmholtz_compl}. To derive the second line, one
   proceeds in an analogous manner and subsequently derives the formulas 
   \begin{align*}
     \Delta\tilde H
     &= \left(\frac{3}{4}\mu^{-2}|\nabla\mu|^2 - \frac{1}{2}\mu^{-1}\Delta\mu \right)
       \tilde H  -  \mu^{-\frac{3}{2}} [\nabla \mu\cdot\nabla ] B
       - \mu^{-\frac{1}{2}} \nabla\times\nabla\times B, \\
     \nabla\times\nabla\times B
     &= \nabla \mu (\nabla\cdot H) - (\Delta \mu) H 
     + (H\cdot\nabla ) \nabla \mu - (\nabla\mu\cdot \nabla) H \\
     & + i\zeta \nabla(\eps\mu) \times E 
       + i\zeta \eps\mu (-i\zeta \mu H+J_m)
       + \nabla\times (\mu^{\frac{1}{2}}\tilde J_e), \\
      -\mu^{-\frac{1}{2}}\nabla\times\nabla\times B
     &= \mu^{-2} \nabla \mu (\nabla \mu)^T \tilde H 
   	  +  \mu^{-1}(\Delta \mu) \tilde H 
     - \mu^{-1}(\tilde H\cdot\nabla ) \nabla \mu   
     - \mu^{-2} |\nabla\mu|^2 \tilde H 
     + \mu^{-\frac{3}{2}} (\nabla\mu\cdot \nabla) B     \\
     & - i\zeta (\eps\mu)^{-\frac{1}{2}} \nabla(\eps\mu) \times \tilde E 
       -  \zeta^2 \eps\mu \tilde H  
       - i\zeta (\eps\mu)^{\frac{1}{2}} \tilde J_m 
       -   \nabla\times \tilde J_e 
       - \frac{1}{2} \mu^{-1} \nabla\mu \times \tilde J_e, \\
     \Delta\tilde H
     &=  \Big[ \mu^{-\frac{1}{2}} \Delta(\mu^{\frac{1}{2}})
 	-\nabla \nabla^T(\log\mu)   
 	+ \zeta^2(\varepsilon_\infty \mu_\infty-\varepsilon \mu)\Big] \tilde H 
 	-\zeta^2\eps_\infty\mu_\infty\tilde H \\
 	 &- 2i\zeta  \nabla((\varepsilon \mu)^{\frac{1}{2}}) \times \tilde E 
       - i\zeta (\eps\mu)^{\frac{1}{2}} \tilde J_m 
       -   \nabla\times  \tilde J_e 
       - \frac{1}{2} \nabla (\log\mu) \times \tilde J_e.       
   \end{align*}\qed

\section*{Appendix B: Uniform estimates for $R_0(\zeta) \partial_j$}

This appendix is devoted to the proof of Theorem~\ref{thm:resolvent-der} (see below) that we needed in the
proof of the Limiting Absorption Principle for Maxwell's equations.
In order to do that we need the following classical result on Fourier multipliers.
   
   \begin{thm}[Mikhlin-H\"ormander] \label{thm:MikhlinHoermander}
     Let $n\in\N,1<r<\infty$. For $k:=\lfloor \frac{n}{2}\rfloor +1$ assume that $m\in C^k(\R^n)$
     satisfies $|\partial_\alpha m(\xi)| \leq A|\xi|^{-|\alpha|}$ for all multi-indices $\alpha\in\N_0^n$ with
     $|\alpha|\leq k$.
     Then  
     $$
       \| \cF^{-1}(m\cF f)\|_r \leq C_{n,r} A \|f\|_r
     $$
   \end{thm}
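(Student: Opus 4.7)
The plan is to derive the bound from the Calderón--Zygmund theorem, by establishing the $L^2$-boundedness of $T_m f := \cF^{-1}(m\,\cF f)$ and producing the appropriate pointwise control on its convolution kernel $K := \cF^{-1}m$ (understood in the distributional sense). The $L^2$-estimate is immediate: the case $|\alpha|=0$ of the hypothesis yields $\|m\|_\infty \leq A$, hence $\|T_m f\|_2 = \|m \hat f\|_2 \leq A\|f\|_2$ by Plancherel.

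For the kernel estimates I would use a Littlewood--Paley decomposition. Fix a radial $\phi \in C_c^\infty(\R^n)$ supported in $\{1/2\leq |\xi|\leq 2\}$ with $\sum_{j\in\Z}\phi(2^{-j}\xi)=1$ on $\R^n\sm\{0\}$, and set $m_j(\xi) := m(\xi)\phi(2^{-j}\xi)$, $K_j := \cF^{-1}(m_j)$. A chain-rule computation shows that the Mikhlin hypothesis transfers to $m_j$ in the form $|\partial^\alpha m_j(\xi)| \les A\, 2^{-j|\alpha|}$ for $|\alpha|\leq k$, uniformly in $j$. Since $m_j$ is supported in an annulus of volume $\sim 2^{jn}$, integration by parts and Plancherel yield
$$
  \int_{\R^n} |x|^{2|\beta|} |K_j(x)|^2 \dx
  \;=\; (2\pi)^{-2|\beta|} \int_{\R^n} |\partial^\beta m_j(\xi)|^2 \,\mathrm{d}\xi
  \;\les\; A^2\, 2^{j(n-2|\beta|)}
$$
for all $0\leq |\beta|\leq k$. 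Because $k > n/2$, splitting $\R^n$ into the regions $\{|x|\lesssim 2^{-j}\}$ and $\{|x|\gtrsim 2^{-j}\}$ and applying Cauchy--Schwarz with weights $1$ and $|x|^{-k}$ respectively gives $\|K_j\|_1 \les A$ and, by the same method applied to $\nabla K_j$, the scale-adapted bound $\|\nabla K_j\|_1 \les A\, 2^j$. Summing the $K_j$ over the dyadic scales adapted to a given $x \neq 0$ (those with $2^{-j}$ at most or at least $|x|$ estimated differently) produces the Calderón--Zygmund kernel bounds $|K(x)| \les A|x|^{-n}$ and $|\nabla K(x)| \les A|x|^{-n-1}$ away from the origin, which in particular verify Hörmander's integral condition.

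With the $L^2$-boundedness and the Calderón--Zygmund kernel estimates in hand, the Calderón--Zygmund theorem furnishes the weak-type $(1,1)$ inequality for $T_m$. Marcinkiewicz interpolation between weak $(1,1)$ and strong $(2,2)$ yields the claimed strong bound for $1<r\leq 2$, and the range $2\leq r<\infty$ follows by duality: the formal adjoint of $T_m$ is the Fourier multiplier with symbol $\overline{m}$, which satisfies the same hypothesis with the same constant $A$.

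The main obstacle in this plan is the borderline accounting in the second step: the value $k=\lfloor n/2\rfloor+1$ leaves essentially no room in the Cauchy--Schwarz argument used to pass from $L^2$-estimates on $|x|^{|\beta|}K_j$ to $L^1$-estimates on $K_j$ (and on its derivative), so the splitting into the regimes $|x|\lesssim 2^{-j}$ and $|x|\gtrsim 2^{-j}$, together with a careful dyadic summation in $j$, is essential to close the kernel bound for $K$.
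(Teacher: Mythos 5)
The paper's own proof of this theorem is a one-line citation to Theorem~6.2.7 in Grafakos, \emph{Classical Fourier Analysis}. Your proposal instead reconstructs the standard Littlewood--Paley/Calder\'{o}n--Zygmund proof from scratch, which is in essence what the cited textbook does, so the overall strategy is the right one. There is, however, one step that is false as written: the claim that summing the dyadic pieces yields the pointwise kernel bounds $|K(x)|\les A|x|^{-n}$ and $|\nabla K(x)|\les A|x|^{-n-1}$. From the hypotheses, integrating by parts $k$ times in the Fourier integral gives at best $|K_j(x)|\les A\,2^{jn}(2^j|x|)^{-k}$ on each dyadic piece, and summing the high-frequency scales $2^j>|x|^{-1}$ requires $k>n$ for convergence; but $k=\lfloor n/2\rfloor+1\leq n$ for every $n\geq 1$, with equality only at $n=1,2$, so the sum diverges (or logarithmically loses) for all $n$. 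Pointwise kernel bounds are genuinely the content of Mikhlin's older theorem with $n$ derivatives; with only $\lfloor n/2\rfloor+1$ derivatives one can expect no more than H\"{o}rmander's integral condition $\int_{|x|>2|y|}|K(x-y)-K(x)|\dx\leq CA$.

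The fix keeps your architecture intact but bypasses pointwise bounds: verify the H\"{o}rmander integral condition directly from the $L^1$ estimates you already proved. Split the sum in $j$ at $2^j|y|\sim 1$. For $2^j|y|\leq 1$, the mean-value inequality together with $\|\nabla K_j\|_1\les A\,2^j$ gives $\int_{\R^n}|K_j(x-y)-K_j(x)|\dx\leq |y|\,\|\nabla K_j\|_1 \les A\,2^j|y|$. For $2^j|y|>1$, the weighted Cauchy--Schwarz estimate $\int_{|x|>2|y|}|K_j|\dx\les A(2^j|y|)^{n/2-k}$ (which follows from $\int|x|^{2k}|K_j|^2\dx\les A^2 2^{j(n-2k)}$) supplies decay because $k>n/2$. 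Summing both geometric tails in $j$ gives the H\"{o}rmander condition uniformly in $y$, and then the $L^2$ bound, the weak $(1,1)$ inequality from Calder\'{o}n--Zygmund theory, Marcinkiewicz interpolation, and duality close the argument exactly as you outlined.
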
  
   \begin{proof}
   		The result is a particular case of Theorem 6.2.7 in~\cite{Graf_Classical}.
   \end{proof}
   
   We also need some boundedness properties of Bessel potentials. 
   
   \begin{thm}\label{thm:Bessel}
   		Assume $n\in\N,n\geq 2$.
   		Let $\mathcal J_1$ be the Bessel potential of order $1$ defined as
   		\begin{equation*}
   			\mathcal{J}_1 f:=\cF^{-1} \left( \frac{1}{\sqrt{1+ |\xi|^2}} \hat f \right),
   			\qquad \text{alternatively}, \qquad
   			\mathcal{J}_1 f:=G\ast f:=\cF^{-1}\left(\frac{1}{\sqrt{1+|\xi|^2}} \right)\ast f,
   		\end{equation*}
   		Assume $1\leq p\leq q\leq \infty$ be such that 
   		\begin{equation*}
   			0\leq \frac{1}{p}-\frac{1}{q}\leq \frac{1}{n},
   			\qquad \left(\frac{1}{p}, \frac{1}{q} \right) \notin \left \{ \left(1, 1-\frac{1}{n} \right), \left( \frac{1}{n}, 0\right) \right\}.
   		\end{equation*}
   		Then $\mathcal J_1$ is a bounded operator from $L^p(\R^n)$ to $L^q(\R^n).$ 
   \end{thm}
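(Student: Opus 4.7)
The plan is to establish the $L^p\to L^q$ boundedness of $\mathcal J_1$ by combining the classical pointwise asymptotics of the Bessel kernel $G$ with Young's convolution inequality and the Hardy--Littlewood--Sobolev inequality. I would start by recalling the well-known two-sided bounds
\begin{equation*}
  G(x) \les |x|^{1-n} \quad (0<|x|\leq 1),
  \qquad
  G(x) \les e^{-|x|/2}|x|^{(1-n)/2} \quad (|x|\geq 1),
\end{equation*}
valid for $n\geq 2$ (see e.g.\ Stein, \emph{Singular Integrals}, Chapter V). Writing $G = G_0 + G_\infty$ with $G_0 := G\chi_{\{|x|\leq 1\}}$ and $G_\infty := G - G_0$, the operator splits as $\mathcal J_1 f = G_0 * f + G_\infty * f$, and each piece can be estimated separately.

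For the tail $G_\infty$, the exponential decay gives $G_\infty \in L^s(\R^n)$ for every $s\in[1,\infty]$, and hence Young's convolution inequality yields the boundedness of $f\mapsto G_\infty * f$ from $L^p$ to $L^q$ throughout the entire range $1\leq p\leq q\leq \infty$ without any restriction. For the singular part $G_0$, the local behavior $G_0 \les |x|^{1-n}\chi_{B_1}$ shows that $G_0 \in L^s(\R^n)$ precisely when $s(n-1)<n$, i.e.\ $1/s > 1 - 1/n$. Whenever $0\leq 1/p - 1/q < 1/n$ strictly, one may pick $s\in[1,\tfrac{n}{n-1})$ satisfying $1/s = 1-(1/p-1/q)$ and apply Young's inequality once more to conclude.

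The main obstacle is the critical line $1/p - 1/q = 1/n$, where Young's inequality no longer applies because $|x|^{1-n}$ just fails to be locally in $L^{n/(n-1)}$. The key is to dominate $G_0$ pointwise by the Riesz kernel of order one, $G_0(x) \les |x|^{1-n}$, so that $|G_0 * f| \les I_1|f|$, and then invoke the Hardy--Littlewood--Sobolev inequality, which gives the boundedness $I_1:L^p(\R^n)\to L^q(\R^n)$ precisely when $1<p<q<\infty$ with $1/p-1/q=1/n$. The two excluded pairs $(1/p,1/q)=(1,1-1/n)$ and $(1/p,1/q)=(1/n,0)$ are exactly the $p=1$ and $q=\infty$ endpoints of this critical line, at which HLS is known to fail; their omission in the hypothesis is therefore sharp. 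The rest of the argument is a routine application of Young's inequality to the two halves of the kernel decomposition, so this endpoint step is the only substantive difficulty.
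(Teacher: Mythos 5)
Your proof is correct, and it takes a genuinely different route from the paper's. You decompose the kernel $G=G_0+G_\infty$ at $|x|=1$, dispose of the exponentially decaying tail by Young's inequality across the full range $1\leq p\leq q\leq\infty$, and then for the local singular part $G_0\les|x|^{1-n}\chi_{B_1}$ use Young again when $0\leq\frac{1}{p}-\frac{1}{q}<\frac{1}{n}$ (picking $s$ with $\frac{1}{s}=1-(\frac{1}{p}-\frac{1}{q})>1-\frac{1}{n}$) and Hardy--Littlewood--Sobolev on the critical line $\frac{1}{p}-\frac{1}{q}=\frac{1}{n}$ with $1<p<q<\infty$; the two excluded pairs $(1,1-\tfrac{1}{n})$ and $(\tfrac{1}{n},0)$ are exactly the endpoints where HLS fails, so the exclusion is accounted for. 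The paper instead cites Grafakos~\cite[Corollary 1.2.6]{Graf_Modern} and interpolation for the interior range $1<p\leq q<\infty$, then treats the remaining boundary lines $p=1$ and $q=\infty$ separately: for $p=1$ it combines the weak-type bound $\mathcal J_1\colon L^1\to L^{q,\infty}$ from Grafakos with $L^1\to L^1$ boundedness and Marcinkiewicz interpolation, and for $q=\infty$ it uses the same kernel bounds you use together with Young. Your argument is more self-contained and symmetric, handling all cases through a single kernel decomposition plus two classical inequalities, at the cost of invoking HLS; the paper's argument minimizes the explicit kernel analysis by leaning on a textbook theorem, but then has to supplement it with two separate endpoint arguments. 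Both proofs correctly identify the same two sharp exclusions, but for somewhat different stated reasons (failure of HLS versus failure of Young and of the strong-type $L^1$ estimate on the critical line), and these reasons are in fact two descriptions of the same phenomenon.
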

   \begin{proof}
   		From~\cite[Corollary 1.2.6 (a),(b)]{Graf_Modern} and interpolation we have
   		\begin{equation*}
   			\mathcal J_1\colon L^p(\R^n)\to L^q(\R^n), 
   			\qquad\text{if}\; 1<p\leq q<\infty,\quad  0\leq \frac{1}{p}-\frac{1}{q}\leq \frac{1}{n}.
   		\end{equation*}
   		Thus, it remains to study the case $p=1$ and the case $q=\infty.$ Let us start with the case $p=1.$
   		Again from~\cite[Corollary 1.2.6 (b)]{Graf_Modern} we have that $\mathcal{J}_1\colon L^1(\R^n)\to
   		L^{q,\infty}(\R^n),$ when $p=1$ and for $\frac{1}{p}-\frac{1}{q}=\frac{1}{n}.$ Moreover,
   		from~\cite[Corollary 1.2.6 (a)]{Graf_Modern}, we know that $\mathcal{J}_1\colon L^1(\R^n)\to L^1(\R^n).$
   		Thus, Marcinkiewicz's interpolation theorem gives that $\mathcal{J}_1\colon L^1(\R^n)\to L^q(\R^n),$ with
   		$1-\frac{1}{q}<\frac{1}{n}.$   		
   		We continue with the case $q=\infty.$ We know from the proof of~\cite[Corollary 1.2.6~(b)]{Graf_Modern}
   		that the kernel $G$ satisfies $|G(x)|\les |x|^{1-n}$ if $|x|\leq 2$ and $|G(x)|\les e^{-\frac{|x|}{2}}$ if $|x|\geq 2.$
   		Using Young's convolution inequality we thus get
   		\begin{equation*}
   			\|\mathcal{J}_1 f \|_\infty
   			=\|G\ast f\|_\infty
   			\leq \|G\|_{p'} \|f \|_p, \qquad \frac{1}{p} + \frac{1}{p'}=1.
   		\end{equation*}
   		The proof is concluded once one observes that $\|G \|_{p'}<\infty$ for $0\leq \frac{1}{p}<\frac{1}{n}.$  
   \end{proof}
   
  With these results at hands, we are in the position to prove the following estimates that complement
  Theorem~\ref{thm:Gutierrez}.
  
  \begin{thm}\label{thm:resolvent-der}
  	Let $m,n\in \N, n\geq 3$ and assume $\zeta\in \C\setminus \R_{\geq 0}.$ Then, for $1\leq p,\tilde p,q\leq
  	\infty$ such that 
  	\begin{equation}\label{eq:ass-deriv}
  	\begin{split}
  	 1&\geq  \frac{1}{p}>\frac{n+1}{2n},\qquad  0\leq   \frac{1}{q}<\frac{n-1}{2n}, \qquad 
      \frac{2}{n+1} \leq \frac{1}{p}-\frac{1}{q}\leq   \frac{2}{n},\\    
	0&\leq \frac{1}{\tilde p}-\frac{1}{q}\leq \frac{1}{n},\qquad
      \left(\frac{1}{\tilde p},\frac{1}{q}\right)\notin 
      \left\{ \left(1,1-\frac{1}{n} \right),  \left(\frac{1}{n},0\right)\right\},
      \end{split}
  	\end{equation}
  	$R_0(\zeta) \partial_j f$ is a bounded linear operator from $L^p(\R^n;\C^m)$ to $L^q(\R^n;\C^m)$ satisfying 
  	\begin{equation}\label{eq:resolvent-der}
  		\|R_0(\zeta) \partial_j f\|_q\lesssim |\zeta|^{\frac{n}{2}(\frac{1}{ p}-\frac{1}{q}-\frac{1}{n})}
  		\|f\|_{p} + |\zeta|^{\frac{n}{2}(\frac{1}{\tilde p}-\frac{1}{q}-\frac{1}{n})} \|f\|_{\tilde p},
  		\qquad j=1,2,\dots, n.
  	\end{equation}
  	Moreover, there are bounded linear operators $R_0(\lambda\pm i0)\partial_j:L^p(\R^n;\C^m)\to
  	L^q(\R^n;\C^m)$ such that $R_0(\zeta)\partial_j f\to R_0(\lambda\pm i0)\partial_j f,$ $j=1,2,\dots,n$ as $\zeta\to\lambda \pm
  	i0,\lambda\in\R_{>0}$ for all $f\in L^p(\R^n;\C^m)$ and
\begin{equation*}
	\|R_0(\lambda \pm i 0)\partial_jf\|_q
	\les |\lambda|^{\frac{n}{2}(\frac{1}{ p}-\frac{1}{q}-\frac{1}{n})} \|f\|_{p} + |\lambda|^{\frac{n}{2}(\frac{1}{\tilde p}-\frac{1}{q}-\frac{1}{n})} \|f\|_{\tilde p}
	\qquad (\lambda>0).
\end{equation*}
  \end{thm}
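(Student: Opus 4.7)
The plan is to split the Fourier multiplier $\xi\mapsto i\xi_j/(|\xi|^2-\zeta)$ into a low-frequency piece and a high-frequency piece, using the natural scale $\sqrt{|\zeta|}$ of the problem. Fix a smooth cutoff $\psi\in C_c^\infty(\R^n)$ with $\psi\equiv 1$ on $\{|\eta|\leq 2\}$ and $\supp\psi\subset\{|\eta|\leq 4\}$, and decompose $R_0(\zeta)\partial_j=A(\zeta)+B(\zeta)$, where $A(\zeta)$ and $B(\zeta)$ have Fourier symbols $\psi(\xi/\sqrt{|\zeta|})\,i\xi_j/(|\xi|^2-\zeta)$ and $(1-\psi(\xi/\sqrt{|\zeta|}))\,i\xi_j/(|\xi|^2-\zeta)$, respectively. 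The two pieces are designed to be estimated by the two qualitatively different tools at our disposal: Guti\'errez's resolvent bound, and Bessel potential/Mikhlin bounds.

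For the low-frequency piece I would factor $A(\zeta)=\sqrt{|\zeta|}\,\tilde\psi(D/\sqrt{|\zeta|})\circ R_0(\zeta)$ with $\tilde\psi(\eta):=i\eta_j\psi(\eta)$. Since $\tilde\psi$ is smooth and compactly supported, Theorem~\ref{thm:MikhlinHoermander} combined with the scale invariance of the Mikhlin class yields $\|\tilde\psi(D/\sqrt{|\zeta|})\|_{L^q\to L^q}\lesssim 1$ uniformly in $\zeta$. The first three conditions in~\eqref{eq:ass-deriv} coincide with~\eqref{eq:indices-Gutierrez}, so Theorem~\ref{thm:Gutierrez} applies and gives
\[
\|A(\zeta)f\|_q\lesssim \sqrt{|\zeta|}\cdot|\zeta|^{\frac{n}{2}(\frac{1}{p}-\frac{1}{q}-\frac{2}{n})}\|f\|_p=|\zeta|^{\frac{n}{2}(\frac{1}{p}-\frac{1}{q}-\frac{1}{n})}\|f\|_p,
\]
which is the first term on the right of~\eqref{eq:resolvent-der}.

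For the high-frequency piece I rewrite
\[
\frac{(1-\psi(\xi/\sqrt{|\zeta|}))\,i\xi_j}{|\xi|^2-\zeta}
=\underbrace{\frac{(1-\psi(\xi/\sqrt{|\zeta|}))\,i\xi_j\sqrt{1+|\xi|^2/|\zeta|}}{|\xi|^2-\zeta}}_{=:m_\zeta(\xi)}\cdot\frac{1}{\sqrt{1+|\xi|^2/|\zeta|}},
\]
so that $B(\zeta)=m_\zeta(D)\circ \mathcal J_1^{(\zeta)}$, where $\mathcal J_1^{(\zeta)}$ is the rescaled Bessel potential with symbol $(1+|\xi|^2/|\zeta|)^{-1/2}$. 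The substitution $\eta=\xi/\sqrt{|\zeta|}$ exhibits $m_\zeta(\xi)=|\zeta|^{-1/2}\tilde m(\xi/\sqrt{|\zeta|})$ with $\tilde m(\eta)=(1-\psi(\eta))\,i\eta_j\sqrt{1+|\eta|^2}/(|\eta|^2-\zeta/|\zeta|)$. On $\supp(1-\psi)$ one has $||\eta|^2-\zeta/|\zeta||\geq |\eta|^2-1\geq |\eta|^2/2$ uniformly in $\zeta\in\C\setminus\R_{\geq 0}$, which makes $\tilde m$ a Mikhlin-H\"ormander multiplier with $\zeta$-independent constants, hence $\|m_\zeta(D)\|_{L^q\to L^q}\lesssim|\zeta|^{-1/2}$. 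A direct scaling computation based on Theorem~\ref{thm:Bessel}, whose excluded endpoints are precisely those excluded in the last part of~\eqref{eq:ass-deriv}, yields $\|\mathcal J_1^{(\zeta)}f\|_q\lesssim|\zeta|^{\frac{n}{2}(\frac{1}{\tilde p}-\frac{1}{q})}\|f\|_{\tilde p}$. Composing the two bounds produces the second term on the right of~\eqref{eq:resolvent-der}.

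To obtain the limiting extension as $\zeta\to\lambda\pm i0$, I would combine the uniform bound just derived with the pointwise convergence $R_0(\zeta)f\to R_0(\lambda\pm i0)f$ of Theorem~\ref{thm:Gutierrez} and the evident continuity of $\tilde\psi(D/\sqrt{|\zeta|})$, $m_\zeta(D)$, $\mathcal J_1^{(\zeta)}$ in the strong operator topology on Schwartz data, closing by an $\epsilon/3$-density argument. The main technical obstacle is to secure the Mikhlin bounds for $\tilde m$ uniformly as $\zeta/|\zeta|$ approaches the positive real axis; this is what forces the cutoff $1-\psi$ to be supported strictly away from $|\eta|=1$, so that the denominator $|\eta|^2-\zeta/|\zeta|$ stays bounded below with all derivatives under control.
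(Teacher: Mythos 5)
Your proof follows essentially the same route as the paper: the same low/high frequency split at scale $\sqrt{|\zeta|}$, Guti\'errez's bound for the piece containing the resolvent singularity, and Bessel potential plus Mikhlin--H\"ormander for the complementary piece, with the same rescaling to the unit-modulus multiplier $|\eta|^2-\zeta/|\zeta|$. The only cosmetic difference is that the paper handles the smooth compactly supported low-frequency multiplier by Young's inequality with a Schwartz kernel rather than by the Mikhlin theorem (which is what you use), but under the hypotheses one always has $1<q<\infty$ and $1<\tilde p<\infty$, so both routes work.
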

  \begin{proof}
  	We first isolate the singularity (in Fourier space) of the Fourier multiplier
  	$\frac{1}{|\xi|^2-\zeta}$. In order to do that we introduce the cut-off function $\chi\in C^\infty_0(\R^n)$
  	with $\chi(\xi)=1$ whenever $|\xi|\leq 2$ and we define $\chi_\zeta(\xi):=\chi(|\zeta|^{-\frac{1}{2}}\xi).$ We then write
  	\begin{equation}\label{eq:is-sing}
  		R_0(\zeta) (\partial_j f)= \cF^{-1}\left( \frac{\chi_\zeta (\xi) (i\xi_j \hat{f}(\xi))}{|\xi|^2 - \zeta} \right) 
  		+ \cF^{-1}\left(\frac{(1-\chi_\zeta (\xi)) (i\xi_j \hat{f}(\xi))}{|\xi|^2 - \zeta} \right)
  		\qquad j=1,2,\dots,n.
  	\end{equation}
  	Observe that $\chi_\zeta$ is nontrivial in a neighborhood of the sphere of radius $\zeta,$ on the contrary $1-\chi_\zeta$ vanishes in the 
  	same neighborhood.  In other words, the singularity of the multiplier affects only the first term of the
  	right-hand side of~\eqref{eq:is-sing}.
  	The latter can be estimated with the aid of Theorem~\ref{thm:Gutierrez}. More specifically, one has
  	\begin{equation*}
  		\begin{split}
  			\left \| \cF^{-1} \left(\frac{\chi_\zeta (\xi) (i\xi_j \hat{f}(\xi))}{|\xi|^2 - \zeta} \right) \right\|_q
  			&= |\zeta|^{-\frac{1}{2} -\frac{n}{2q}}
       \left\|\cF^{-1}\left(  \frac{\chi(\xi)\xi_j \cF
       [f(|\zeta|^{-1/2}\cdot)](\xi)}{|\xi|^2-\frac{\zeta}{|\zeta|}} \right)\right\|_q \\
      &\les |\zeta|^{-\frac{1}{2} -\frac{n}{2q}}
       \left\|\cF^{-1}\left(
      \chi(\xi)\xi_j  \cF  [f(|\zeta|^{-1/2}\cdot)](\xi) \right) \right\|_p   \\
      &\les |\zeta|^{-\frac{1}{2} -\frac{n}{2q}}
       \left\|\cF^{-1}\left(\chi(\xi)\xi_j\right) \ast f(|\zeta|^{-1/2}\cdot)  \right\|_p    \\
      &\les |\zeta|^{-\frac{1}{2} -\frac{n}{2q}} \|f(|\zeta|^{-1/2}\cdot)\|_p    \\
      &\les |\zeta|^{\frac{n}{2}(\frac{1}{p}-\frac{1}{q}-\frac{1}{n})} \|f\|_p.
  		\end{split}
  	\end{equation*}
  	 Here we used Young's convolution inequality and that $\cF^{-1}(\chi(\xi)\xi_j)$ is
    integrable (being a Schwartz function).
    
    \medskip
    
    We now turn  to the estimate of the second term in the sum in~\eqref{eq:is-sing}. We shall use that
    \begin{equation*}
    	m(\xi):=\frac{(1-\chi(\xi)) \xi_j \sqrt{1+ |\xi|^2} }{|\xi|^2-\zeta/|\zeta|}, \qquad j=1,2,\dots, n,
    \end{equation*}  
    is a $L^r(\R^n)-L^r(\R^n)$ multiplier for $1<r<\infty,$ due to the simplified version of the
    Mikhlin-H\"ormander Theorem stated in Theorem~\ref{thm:MikhlinHoermander}. Recall that $\chi$ satisfies
    $1-\chi\equiv 0$ on a neighborhood of the unit sphere. Notice that, by~\eqref{eq:ass-deriv} we have
    $0<\frac{1}{q}<1$ or $0<\frac{1}{\tilde p}<1.$ In the case $0<\frac{1}{q}<1$ we use the above observation
    for $r=q$ and Theorem~\ref{thm:Bessel}  implies
    \begin{equation*}
    	\begin{split}
    		\cF^{-1}\left(\frac{(1-\chi_\zeta (\xi)) (i\xi_j \hat{f}(\xi))}{|\xi|^2 - \zeta} \right)
    		   &=|\zeta|^{-\frac{1}{2}-\frac{n}{2q}} \left\|\cF^{-1} \left( \frac{(1-\chi(\xi)) \xi_j
       \cF [f(|\zeta|^{-1/2}\cdot)](\xi)}{|\xi|^2-1}   \right)\right\|_q \\
       &=  |\zeta|^{-\frac{1}{2}-\frac{n}{2q}} \left\| \cF^{-1} \left( m(\xi)
      \cF\left(
      \cF^{-1}\left( 
      \frac{1}{\sqrt{|\xi|^2+1}}  \cF [f(|\zeta|^{-1/2}\cdot)](\xi)\right)
      \right)\right)\right\|_q  \\
      &=  |\zeta|^{-\frac{1}{2}-\frac{n}{2q}} \left\| \cF^{-1} \left( m(\xi)
      \cF\left(\mathcal{J}_1 [f(|\zeta|^{-1/2}\cdot)]\right) \right)\right\|_q  \\
      &\les |\zeta|^{-\frac{1}{2}-\frac{n}{2q}} \left\|\mathcal{J}_1 [f(|\zeta|^{-1/2}\cdot)]
        \right\|_q   \\
      &\les |\zeta|^{-\frac{1}{2}-\frac{n}{2q}}  \|f(|\zeta|^{-1/2}\cdot) \|_{\tilde p}   \\
      &\leq  |\zeta|^{\frac{n}{2}(\frac{1}{\tilde p}-\frac{1}{q}-\frac{1}{n})} \|f\|_{\tilde p}.
    	\end{split}
    \end{equation*}
    In the complementary case $0<\frac{1}{\tilde p}<1,$ we use the Mikhlin-H\"ormander
    Theorem~\ref{thm:MikhlinHoermander} for $r=\tilde p$ and proceed similarly.     
    Plugging the two previous bounds in~\eqref{eq:is-sing} gives~\eqref{eq:resolvent-der}. The final part of
    Theorem~\ref{thm:resolvent-der} can be proved as in Theorem~\ref{thm:Gutierrez}.
  \end{proof}
   
\section*{Acknowledgements}

We thank R. Schnaubelt (KIT, Karlsruhe) and P. D'Ancona (La Sapienza, Rome) for  
sharing their results and ideas with us. Funded by the Deutsche Forschungsgemeinschaft (DFG, German
Research Foundation) -- Project-ID 258734477 -- SFB 1173.

\bibliographystyle{plain}

\begin{thebibliography}{10}

\bibitem{Agmon70}
S.~Agmon. 
\newblock Lower bounds for solutions of Schr\"odinger equations. 
\newblock {\em J. Anal. Math.} 23:1--25, 1970.

\bibitem{Agmon_Spectral}
S.~Agmon.
\newblock Spectral properties of {S}chr\"{o}dinger operators and scattering
  theory.
\newblock {\em Ann. Scuola Norm. Sup. Pisa Cl. Sci. (4)}, 2(2):151--218, 1975.

\bibitem{Agmon_ARepresentation}
S.~Agmon.
\newblock A representation theorem for solutions of the {H}elmholtz equation
  and resolvent estimates for the {L}aplacian.
\newblock In {\em Analysis, et cetera}, pages 39--76. Academic Press, Boston,
  MA, 1990.

\bibitem{Alberti_Lectures}
G.~S. Alberti and Y.~Capdeboscq.
\newblock {\em Lectures on elliptic methods for hybrid inverse problems},
  volume~25 of {\em Cours Sp\'{e}cialis\'{e}s [Specialized Courses]}.
\newblock Soci\'{e}t\'{e} Math\'{e}matique de France, Paris, 2018.

\bibitem{AHK}
S.~Avramska-Lukarska, D.~Hundertmark and H. Kova\v r\'ik, 
\newblock Absence of positive eigenvalues for magnetic Schrödinger operators, 2020.
\newblock  arXiv: 2003.07294.

\bibitem{BirmanYafaev_Scattering}
M.~Sh. Birman and D.~R. Yafaev.
\newblock The scattering matrix for a perturbation of a periodic
  {S}chr\"{o}dinger operator by decreasing potential.
\newblock {\em Algebra i Analiz}, 6(3):17--39, 1994.

\bibitem{BoucletMizutani_Uniform}
J.-M. Bouclet and H.~Mizutani.
\newblock Uniform resolvent and {S}trichartz estimates for {S}chr\"{o}dinger
  equations with critical singularities.
\newblock {\em Trans. Amer. Math. Soc.}, 370(10):7293--7333, 2018.

\bibitem{CacDAncLuc_LAP2}
F.~Cacciafesta, P.~D'Ancona, and R.~Luc\`a.
\newblock Helmholtz and dispersive equations with variable coefficients on
  exterior domains.
\newblock {\em SIAM J. Math. Anal.}, 48(3):1798--1832, 2016.

\bibitem{CacDAncLuc_LAP1}
F.~Cacciafesta, P.~D'Ancona, and R.~Luc\`a.
\newblock A limiting absorption principle for the {H}elmholtz equation with
  variable coefficients.
\newblock {\em J. Spectr. Theory}, 8(4):1349--1392, 2018.

\bibitem{CFK}
L. Cossetti, L. Fanelli and D. Krej\v ci\v r\' ik, 
\newblock Absence of eigenvalues of Dirac and Pauli Hamiltonians via the method of multipliers.
\newblock {\em Comm. Math. Phys.} 379:633--669, 2020.


\bibitem{DAnSchnaubelt}
P. D'Ancona and R. Schnaubelt.
\newblock Global Strichartz estimates for an inhomogeneous Maxwell system,
\newblock arXiv:2106.16164 [math.AP] (2021).

\bibitem{nittis2019spectral}
G.~De Nittis, M.~Moscolari, S.~Richard, and R.~Tiedra de~Aldecoa.
\newblock Spectral and scattering theory of one-dimensional coupled photonic
  crystals.
 \newblock {\em Rev. Math. Phys.} 2150027, 2021.

\bibitem{Eidus}
D. M.~\`E\u{i}dus. 
\newblock On the principle of limiting absorption. 
\newblock{\em Mat. Sb. (N.S.)}, 57 (99):13--44, 1962.

\bibitem{EidusMaxwell}
D. M.~\`E\u{i}dus. 
\newblock On the spectra and eigenfunctions of the Schr\"odinger and Maxwell operators.
\newblock{\em Journal of Mathematical Analysis and Applications}, 106 (2),540--568, 1985.

\bibitem{EllerYamamoto_Carleman}
M.~M. Eller and M.~Yamamoto.
\newblock A {C}arleman inequality for the stationary anisotropic {M}axwell
  system.
\newblock {\em J. Math. Pures Appl. (9)}, 86(6):449--462, 2006.

\bibitem{FabMor_Electromagnetism}
M.~Fabrizio and A.~Morro.
\newblock {\em Electromagnetism of continuous media}.
\newblock Oxford Science Publications. Oxford University Press, Oxford, 2003.
\newblock Mathematical modelling and applications.

\bibitem{FKV1}
L.~Fanelli, D.~Krej\v ci\v r\'ik and L.~Vega.
\newblock Absence of eigenvalues of two-dimensional magnetic Schr\"odinger operators.
\newblock {\em J. Funct. Anal.} 275:2453--2472, 2018.

\bibitem{FKV2}
L.~Fanelli, D.~Krej\v ci\v r\'ik and L.~Vega.
\newblock Spectral stability of Schr\"odinger operators with subordinated complex potentials.
\newblock {\em J. Spectr. Theory} 8:575-604, 2018.


\bibitem{Froese2}
R.~Froese, I.~Herbst, M.~Hoffmann-Ostenhof and T.~Hoffmann-Ostenhof. 
\newblock On the absence of positive eigenvalues for one-body Schrödinger operators.
\newblock {\em J. Anal. Math.} 41:272--284, 1982.

\bibitem{GolSch_LAP}
M.~Goldberg and W.~Schlag.
\newblock A limiting absorption principle for the three-dimensional
  {S}chr\"{o}dinger equation with {$L^p$} potentials.
\newblock {\em Int. Math. Res. Not.}, (75):4049--4071, 2004.

\bibitem{Graf_Classical}
L.~Grafakos.
\newblock {\em Classical {F}ourier analysis}, volume 249 of {\em Graduate Texts
  in Mathematics}.
\newblock Springer, New York, third edition, 2014.

\bibitem{Graf_Modern}
L.~Grafakos.
\newblock {\em Modern {F}ourier analysis}, volume 250 of {\em Graduate Texts in
  Mathematics}.
\newblock Springer, New York, third edition, 2014.

\bibitem{Gut_nontrivial}
S.~Guti\'{e}rrez.
\newblock Non trivial {$L^q$} solutions to the {G}inzburg-{L}andau equation.
\newblock {\em Math. Ann.}, 328(1-2):1--25, 2004.

\bibitem{HuangYaoZheng}
S.~Huang, X.~Yao, and Q.~Zheng.
\newblock Remarks on {$L^p$}-limiting absorption principle of {S}chr\"{o}dinger
  operators and applications to spectral multiplier theorems.
\newblock {\em Forum Math.}, 30(1):43--55, 2018.

\bibitem{IonJer_OnTheAbsence}
A.~D. Ionescu and D.~Jerison.
\newblock On the absence of positive eigenvalues of {S}chr\"{o}dinger operators
  with rough potentials.
\newblock {\em Geom. Funct. Anal.}, 13(5):1029--1081, 2003.

\bibitem{IonSch_AgmonKatoKuroda}
A.~D. Ionescu and W.~Schlag.
\newblock Agmon-{K}ato-{K}uroda theorems for a large class of perturbations.
\newblock {\em Duke Math. J.}, 131(3):397--440, 2006.

\bibitem{Jackson}
J.~D. Jackson.
\newblock {\em Classical electrodynamics}.
\newblock John Wiley \& Sons, Inc., New York-London-Sydney, second edition,
  1975.

\bibitem{JerisonKenig}
D.~Jerison and C.~E. Kenig.
\newblock Unique continuation and absence of positive eigenvalues for
  {S}chr\"{o}dinger operators.
\newblock {\em Ann. of Math. (2)}, 121(3):463--494, 1985.
\newblock With an appendix by E. M. Stein.

\bibitem{Kato}
T.~Kato. 
\newblock Growth properties of solutions of the reduced wave equation with a variable coefficient. 
\newblock {\em Comm. Pure Appl. Math.}, 12:403--425, 1959.

\bibitem{KenNad_Counterexample}
C.~E. Kenig and N.~Nadirashvili.
\newblock A counterexample in unique continuation.
\newblock {\em Math. Res. Lett.}, 7(5-6):625--630, 2000.

\bibitem{K_R_S}
C.~E. Kenig, A.~Ruiz, and C.~D. Sogge.
\newblock Uniform {S}obolev inequalities and unique continuation for second
  order constant coefficient differential operators.
\newblock {\em Duke Math. J.}, 55(2):329--347, 1987.


\bibitem{KochTataru_Counterexamples}
H.~Koch and D.~Tataru.
\newblock Sharp counterexamples in unique continuation for second order
  elliptic equations.
\newblock {\em J. Reine Angew. Math.}, 542:133--146, 2002.

\bibitem{KochTat_Absence}
H.~Koch and D.~Tataru.
\newblock Carleman estimates and absence of embedded eigenvalues.
\newblock {\em Comm. Math. Phys.}, 267(2):419--449, 2006.

\bibitem{Kuchment}
P.~Kuchment.
\newblock The mathematics of photonic crystals.
\newblock In {\em Mathematical modeling in optical science}, volume~22 of {\em
  Frontiers Appl. Math.}, pages 207--272. SIAM, Philadelphia, PA, 2001.

\bibitem{KwonLee_Sharp}
Y.~Kwon and S.~Lee.
\newblock Sharp resolvent estimates outside of the uniform boundedness range.
\newblock {\em Commun. Math. Phys.}, 2019.

\bibitem{Man_LAP}
R.~Mandel.
\newblock The limiting absorption principle for periodic differential operators
  and applications to nonlinear {H}elmholtz equations.
\newblock {\em Comm. Math. Phys.}, 368(2):799--842, 2019.

\bibitem{MaSki}
R.~Mandel and R.~Schippa.
\newblock Time-harmonic solutions for Maxwell's equations in anisotropic media and Bochner-Riesz estimates with negative index for non-elliptic surfaces, 2021.
\newblock arXiv:2103.17176.

\bibitem{MizZhaZhe_Uniform}
H.~Mizutani, J.~Zhang and J.~Zheng.
\newblock Uniform resolvent estimates for {S}chr\"{o}dinger operator with an
  inverse-square potential.
\newblock {\em J. Funct. Anal.}, 278(4):108350, 29, 2020.

\bibitem{Nguyen_LAP}
H.-M. Nguyen.
\newblock Limiting absorption principle and well-posedness for the {H}elmholtz
  equation with sign changing coefficients.
\newblock {\em J. Math. Pures Appl. (9)}, 106(2):342--374, 2016.

\bibitem{Nguyen2018}
\bysame,
\newblock Superlensing using complementary media and reflecting complementary media for electromagnetic waves.
\newblock {\em  Adv. Nonlinear Anal.}, 7(4):449--467, 2018.

\bibitem{Nguyen_2020}
H.-M. Nguyen and S.~Sil.
\newblock Limiting absorption principle and well-posedness for the
  time-harmonic maxwell equations with anisotropic sign-changing coefficients.
\newblock {\em Communications in Mathematical Physics}, Jul 2020.

\bibitem{Okaji_SUCP}
T.~\={O}kaji.
\newblock Strong unique continuation property for time harmonic {M}axwell
  equations.
\newblock {\em J. Math. Soc. Japan}, 54(1):89--122, 2002.

\bibitem{Pauly_Low}
D.~Pauly.
\newblock Low frequency asymptotics for time-harmonic generalized {M}axwell's
  equations in nonsmooth exterior domains.
\newblock {\em Adv. Math. Sci. Appl.}, 16(2):591--622, 2006.

\bibitem{PicWecWeit_THMaxwell}
R.~Picard, N.~Weck, and K.-J. Witsch.
\newblock Time-harmonic {M}axwell equations in the exterior of perfectly
  conducting, irregular obstacles.
\newblock {\em Analysis (Munich)}, 21(3):231--263, 2001.

\bibitem{Radosz_LAP}
M.~Radosz.
\newblock New limiting absorption and limit amplitude principles for periodic
  operators.
\newblock {\em Z. Angew. Math. Phys.}, 66(2):253--275, 2015.

\bibitem{Rellich}
F.~Rellich.
\newblock \"Uber das asymptotische Verhalten der L\"osungen von $\Delta u +\lambda u=0$ in unendlichen
Gebieten.
\newblock{\em Jber. Deutsch. Math.-Verein.}, 53:57--65, 1943.

\bibitem{Royer_LAP}
J.~Royer.
\newblock Limiting absorption principle for the dissipative {H}elmholtz
  equation.
\newblock {\em Comm. Partial Differential Equations}, 35(8):1458--1489, 2010.

\bibitem{Ski}
R. Schippa.
\newblock Resolvent estimates for time-harmonic Maxwell's equations in the partially anisotropic case, 2021.
\newblock  arXiv:2103.16951.

\bibitem{Simon}
B.~Simon. 
\newblock On positive eigenvalues of one body Schr\"odinger operators. \newblock {\em Commun. Pure Appl.
Math.} 22:531--538, 1967.

\bibitem{Sommerfeld}
A.~Sommerfeld. 
\newblock Die Greensche Funktion der Schwingungslgleichung.
\newblock {\em Jber. Deutsch. Math.-Verein.}, 21:309--352, 1912. 

\bibitem{vonWahl_Estimating}
W.~von Wahl.
\newblock Estimating {$\nabla u$} by {${\rm div}\, u$} and {${\rm curl}\, u$}.
\newblock {\em Math. Methods Appl. Sci.}, 15(2):123--143, 1992.



\end{thebibliography}

\end{document}